\providecommand{\U}[1]{\protect\rule{.1in}{.1in}}
\newtheorem{theorem}{Theorem}[section]
\newtheorem{corollary}[theorem]{Corollary}
\newtheorem{definition}[theorem]{Definition}
\newtheorem{lemma}[theorem]{Lemma}
\newtheorem{proposition}[theorem]{Proposition}
\newenvironment{proof}[1][Proof]{\noindent\textbf{#1.} }{\ \rule{0.5em}{0.5em}}
\begin{document}

\title{On graded irreducible representations of Leavitt path algebras}
\author{Roozbeh Hazrat$^{(a)}$ and Kulumani M. Rangaswamy$^{(b)}$\\(a) Centre for Research in Mathematics, \\University of Western Sydney, Sydney, Australia\\E-mail: r.hazrat@uws.edu.au\\(b) Department of Mathematics,\\University of Colorado at Colorado Springs,\\Colorado Springs, Colorado 80918, USA\\E-mail: krangasw@uccs.edu}
\date{}
\maketitle

\begin{abstract}
Using the $E$-algebraic branching systems,\ various graded irreducible
representations of a Leavitt path $K$-algebra $L$ of a directed graph $E$ are
constructed. The concept of a Laurent vertex is introduced and it is shown
that the minimal graded left ideals of $L$ are generated by the Laurent
vertices or the line points leading to a detailed description of the graded
socle of $L$. Following this, a complete characterization is obtained of the
Leavitt path algebras over which every graded irreducible representation is
finitely presented. A useful result is that the irreducble representation
$V_{[p]}$ induced by infinite paths tail-equivalent to an infinite path $p$
(we call this a Chen simple module) is graded if and only if $p$ is an
irrational path. We also show that every one-sided ideal of $L$ is graded if
and only if the graph $E$ contains no cycles. Supplementing the theorem of one
of the co-authors that every Leavitt path algebra $L$ is graded von Neumann
regular, we show that $L$ is graded self-injective if and only if $L$ is a
graded semi-simple algebra, made up of matrix rings of arbitrary size over the
field $K$ or the\ graded field $K[x^{n},x^{-n}]$ where $n\in%
\mathbb{N}
$.

\end{abstract}

\section{Introduction and Preliminaries}

As mentioned in \cite{GR-1}, branching systems arise in the study of various
disciplines such as random walk, symbolic dynamics and scientific computing
(see e.g. \cite{D}, \cite{H}, \cite{SS}). Given an arbitrary directed graph
$E$, Gon\c{c}alves and Royer defined in \ \cite{GR-1} and \cite{GR-0} a
branching system using a measure space $(X,\mu)$ and indicated a method of
constructing a large number of representations of the graph $C^{\ast}$-algebra
$C^{\ast}(E)$ in the space of bounded linear operators on\ $L^{2}(X,\mu)$. As
an algebraic analogue of\ this theory, they defined the concept of an
$E$-algebraic branching system in \cite{GR} to construct various
representations of Leavitt path algebras (which are algebraic analogues of
graph C*-algebras). Following this, X.W.Chen (\cite{C}) constructed
irreducible representations of a Leavitt path algebra $L$ of a graph $E$ using
the sinks as well as infinite paths in $E$ and showed that these
representations are also induced by appropriate $E$-algebraic branching
systems. Just about the same time, the relations between the theory of quiver
representations and the theory of representations of Leavitt path algebras
were explored in \cite{AB}. Following this there was a flurry of activity in
investigating various irreducible representations of $L$ induced by infinite
emitters and cycles in $E$, leading to the study of Leavitt path algebras with
a special type of or a specific number of irreducible representations (see
\cite{AMT}, \cite{ARa}, \cite{R-2}, \cite{R-3}).

Leavitt path algebras are also endowed with a compatible $%
\mathbb{Z}
$-grading and the initial study of the graded structure of these algebras was
done in \cite{H-1}. Moreover, it was shown in \cite{H-2} that every Leavitt
path algebra is a graded von Neumann regular algebra, an indication of a
possible rich internal structure of these algebras. As a first step in
understanding the graded modules over these algebras, we consider in this
paper the graded irreducible representations of a Leavitt path algebra $L$ of
an arbitrary graph $E$. It is known for a graded von Neumann regular ring $R$
with identity, that there is a one-to-one correspondence between the graded
minimal left ideals of $R$ and the minimal left ideals of $R_{0}$. When $R$ is
a Leavitt path algebra, $R_{0}$ is ultramatricial. While it may not always be
possible to describe all the minimal left ideals of $R_{0}$, our graphical
approach enables us to provide a complete description of the graded minimal
left/right ideals of $R$. Defining a grade on the $E$-algebraic branching
systems, we construct various types of graded irreducible representations of
$L$. A useful result is that if $p$ is an infinite path, then the irreducible
representation $V_{[p]}$ defined by the equivalence class of infinite paths
tail-equivalent to $p$ is graded if and only if $p$ is an irrational path (see
definitions below). We also describe the annihilator ideals of many of the
graded irreducible representations. It turns out that, unlike the non-graded
case, the annihilator of a graded irreducible representation need not be a
primitive ideal, but is a graded prime ideal of $L$. The concept of a Laurent
vertex (see Definition \ref{Defn Laurent vertex} below) is introduced and is
shown that a graded left ideal of $L$ is a minimal graded left ideal if and
only if it is isomorphic to the left ideal $Lv$ with possibly a shifted
grading, where $\ v$ is either a Laurent vertex or a line point. This enables
us to describe the graded socle of $L$ as the ideal generated by these special
vertices in $E$ and that it is a graded direct sum of matrix rings of
arbitrary size over $K$ and/or $K[x,x^{-1}]$ with appropriate gradings.

Next we investigate when a graded irreducible representation of $L$ is
finitely presented. A key result is that a graded irreducible representation
$S$ induced by an infinite path $p$ is finitely presented if and only if $p$
contains a line point and in that case $S$ becomes graded projective. Using
this, it is shown that every irreducible representation of $L$ is finitely
presented if and only if the graph $E$ is row-finite and that every path in
$E$ eventually ends at a line point or a Laurent vertex. In this case, the
structure of $L$ is also completely described.

A well-known theorem in Leavitt path algebras (\cite{T}) states that every
two-sided ideal of a Leavitt path algebra $L:=L_{K}(E)$ is graded if and only
if the graph $E$ satisfies Condition (K) (see definition below). A natural
question is when will every left (right) ideal of $L$ be graded. We show that
this happens exactly when the graph $E$ contains no cycles. In this case, $L$
is a directed union of graded subalgebras $B$ each of which is a graded direct
sum of finitely many matrix rings of finite order over $K$ endowed with
natural gradings.

It is known that Leavitt path algebras which are self-injective are always von
Neumann regular (see \cite{ARS}). Since by \cite{H-2} every Leavitt path
algebra is graded von Neumann regular, it is natural to consider the subclass
of Leavitt path algebras which are graded self-injective. We show that such
algebras are just the graded semi-simple $K$-algebras. The corresponding graph
theoretical properties of the graph $E$ are also described.

As an illustration of the advantage of viewing a Leavitt path algebra $L$ as a
$%
\mathbb{Z}
$-graded algebra, we show how the properties of $L$ as a graded structure
enables us to simplify, generalize or correct the proofs of some of the
important previously published results on Leavitt path algebras (see Theorems
4.2, 4.6 and Proposition 6.2).

\bigskip

For the general notation, terminology and results in Leavitt path algebras, we
refer to \cite{AA} and \cite{R-1}. We give below a short outline of some of
the needed basic concepts and results.

A (directed) graph $E=(E^{0},E^{1},r,s)$ consists of two sets $E^{0}$ and
$E^{1}$ together with maps $r,s:E^{1}\rightarrow E^{0}$. The elements of
$E^{0}$ are called \textit{vertices} and the elements of $E^{1}$
\textit{edges}. All the graphs $E$ that we consider (excepting when
specifically stated) are arbitrary in the sense that no restriction is placed
either on the number of vertices in $E$ or on the number of edges emitted by a
single vertex. In this connection, we wish to point out that some of the
earlier papers in Leavitt path algebras, such as \cite{AAPS}, \cite{AMMS}
which we quote in this paper, assume that the graphs considered are countable.
But the proofs of the results in these papers are valid even for uncountable graphs.

A vertex $v$ is called a \textit{sink} if it emits no edges and a vertex $v$
is called a \textit{regular} \textit{vertex} if it emits a non-empty finite
set of edges. An \textit{infinite emitter} is a vertex which emits infinitely
many edges. A graph without infinite emitters is said to be row-finite. For
each $e\in E^{1}$, we call $e^{\ast}$ a ghost edge. We let $r(e^{\ast})$
denote $s(e)$, and we let $s(e^{\ast})$ denote $r(e)$. A\textit{ path} $\mu$
of length $n>0$ is a finite sequence of edges $\mu=e_{1}e_{2}\cdot\cdot\cdot
e_{n}$ with $r(e_{i})=s(e_{i+1})$ for all $i=1,\cdot\cdot\cdot,n-1$. In this
case $\mu^{\ast}=e_{n}^{\ast}\cdot\cdot\cdot e_{2}^{\ast}e_{1}^{\ast}$ is the
corresponding ghost path. A vertex is considered a path of length $0$. The set
of all vertices on the path $\mu$ is denoted by $\mu^{0}$. We shall denote the
length of a path $\mu$ by $|\mu|$.

A path $\mu$ $=e\cdot\cdot\cdot e_{n}$ in $E$ is \textit{closed} if
$r(e_{n})=s(e_{1})$, in which case $\mu$ is said to be based at the vertex
$s(e_{1})$. A closed path $\mu$ as above is called \textit{simple} provided it
does not pass through its base more than once, i.e., $s(e_{i})\neq s(e_{1})$
for all $i=2,...,n$. The closed path $\mu$ is called a \textit{cycle} if it
does not pass through any of its vertices twice, that is, if $s(e_{i})\neq
s(e_{j})$ for every $i\neq j$. An \textit{exit} for a path $\mu$ $=e_{1}%
\cdot\cdot\cdot e_{n}$ is an edge $f$ that satisfies $s(f)=s(e_{i})$ for some
$i$ but $f\neq e_{i}$. The graph $E$ is said to satisfy \textit{Condition
(L)}, if every cycle in $E$ has an exit in $E$. The graph $E$ is said to
satisfy\textit{ Condition (K),} if any vertex on a closed path $\mu$ is also
the base for a closed path $\gamma$ different from $\mu$.

If there is a path from vertex $u$ to a vertex $v$, we write $u\geq v$. A
subset $D$ of vertices is said to be \textit{downward directed }\ if for any
$u,v\in D$, there exists a $w\in D$ such that $u\geq w$ and $v\geq w$. A
subset $H$ of $E^{0}$ is called \textit{hereditary} if, whenever $v\in H$ and
$w\in E^{0}$ satisfy $v\geq w$, then $w\in H$. A hereditary set is
\textit{saturated} if, for any regular vertex $v$, $r(s^{-1}(v))\subseteq H$
implies $v\in H$.

Given an arbitrary graph $E$ and a field $K$, the \textit{Leavitt path algebra
}$L_{K}(E)$ is defined to be the $K$-algebra generated by a set $\{v:v\in
E^{0}\}$ of pairwise orthogonal idempotents together with a set of variables
$\{e,e^{\ast}:e\in E^{1}\}$ which satisfy the following conditions:

(1) \ $s(e)e=e=er(e)$ for all $e\in E^{1}$.

(2) $r(e)e^{\ast}=e^{\ast}=e^{\ast}s(e)$\ for all $e\in E^{1}$.

(3) (The "CK-1 relations") For all $e,f\in E^{1}$, $e^{\ast}e=r(e)$ and
$e^{\ast}f=0$ if $e\neq f$.

(4) (The "CK-2 relations") For every regular vertex $v\in E^{0}$,
\[
v=\sum_{e\in E^{1},s(e)=v}ee^{\ast}.
\]

For any vertex $v$, the \textit{tree} of $v$ is $T_{E}(v)=\{w\in E^{0}:v\geq
w\}$. We say there is a\textit{ bifurcation} at a vertex $v$ or $v$ is
a\textit{ bifurcation vertex}, if $v$ emits more than one edge. In a graph
$E$, a vertex $v$ is called a \textit{line point} if there is no bifurcation
or a cycle based at any vertex in $T_{E}(v)$. Thus, if $v$ is a line point,
there will be a single finite or infinite line segment $\mu$ starting at $v$
($\mu$ could just be $v$) and any other path $\alpha$ with $s(\alpha)=v$ will
just be an initial subpath of $\mu$. It was shown in \cite{AMMS} that $v$ is a
line point in $E$ if and only if $L_{K}(E)v$ (and likewise $vL_{K}(E)$) is a
simple left (right) ideal of $L$.

We shall be using the following concepts and results from \cite{T}. A
\textit{breaking vertex }of a hereditary saturated subset $H$ is an infinite
emitter $w\in E^{0}\backslash H$ with the property that $0<|s^{-1}(w)\cap
r^{-1}(E^{0}\backslash H)|<\infty$. The set of all breaking vertices of $H$ is
denoted by $B_{H}$. For any $v\in B_{H}$, $v^{H}$ denotes the element
$v-\sum_{s(e)=v,r(e)\notin H}ee^{\ast}$. Given a hereditary saturated subset
$H$ and a subset $S\subseteq B_{H}$, $(H,S)$ is called an \textit{admissible
pair.} Given an admissible pair $(H,S)$, the ideal generated by $H\cup
\{v^{H}:v\in S\}$ is denoted by $I(H,S)$. It was shown in \cite{T} that the
graded ideals of $L_{K}(E)$ are precisely the ideals of the form $I(H,S)$ for
some admissible pair $(H,S)$. Moreover, $L_{K}(E)/I(H,S)\cong L_{K}%
(E\backslash(H,S))$. Here $E\backslash(H,S)$ is the \textit{quotient graph of
}$E$ in which\textit{ }%
\[
(E\backslash(H,S))^{0}=(E^{0}\backslash H)\cup\{v^{\prime}:v\in B_{H}%
\backslash S\}
\]
and
\[
(E\backslash(H,S))^{1}=\{e\in E^{1}:r(e)\notin H\}\cup\{e^{\prime}:e\in
E^{1},r(e)\in B_{H}\backslash S\}.
\]
Further, $r$ and $s$ are extended to $(E\backslash(H,S))^{1}$ by setting
$s(e^{\prime})=s(e)$ and $r(e^{\prime})=r(e)^{\prime}$.

A useful observation is that every element $a$ of $L_{K}(E)$ can be written as
$a=%
{\textstyle\sum\limits_{i=1}^{n}}
k_{i}\alpha_{i}\beta_{i}^{\ast}$, where $k_{i}\in K$, $\alpha_{i},\beta_{i}$
are paths in $E$ and $n$ is a suitable integer. Moreover, $L_{K}(E)=%
{\displaystyle\bigoplus\limits_{v\in E^{0}}}
L_{K}(E)v=%
{\displaystyle\bigoplus\limits_{v\in E^{0}}}
vL_{K}(E)$ (see \cite{AA}).

Recall that a $K$-algebra $R$ is said to be a $%
\mathbb{Z}
$\textit{-graded algebra} if $R=%
{\displaystyle\bigoplus\limits_{n\in\mathbb{Z}}}
R_{n}$ is a direct sum of $K$-vector spaces $R_{n}$, which satisfy the
property that $R_{m}R_{n}\subseteq R_{m+n}$ for all $m,n$. Elements of the
subspaces $R_{n}$ are called \textit{homogeneous elements}. A left $R$-module
$M$ is called a\textit{ graded left }$R$\textit{-module}, if $M=%
{\displaystyle\bigoplus\limits_{n\in\mathbb{Z}}}
M_{n}$ is a direct sum of subgroups $M_{n}$ which satisfy the property that
$R_{m}M_{n}\subseteq M_{m+n}$ for all $m,n$. For any $k\in%
\mathbb{Z}
$, the\textit{ }$k$\textit{-shifted module} $M(k)$ is the graded $R$-module
for which the homogeneous component $M(k)_{n}=M_{k+n}$ for all $n$. \ A graded
ring $R$ with $1$ is called a \textit{graded division ring }if every non-zero
homogeneous element in $R$ is invertible. A commutative graded division ring
is called a \textit{graded field}. The graded fields that we encounter in this
paper are $%
\mathbb{Z}
$-graded subrings of $K[x,x^{-1}]$ of the form $K[x^{n},x^{-n}]$ \ with
support $n%
\mathbb{Z}
$, where $n\in%
\mathbb{N}
$. For the general properties of graded rings and graded modules, we refer to
\cite{NvO}. Every Leavitt path algebra $L:=L_{K}(E)$ is a $%
\mathbb{Z}
$-graded algebra $L=%
{\displaystyle\bigoplus\limits_{n\in\mathbb{Z}}}
L_{n}$ induced by defining, for all $v\in E^{0}$ and $e\in E^{1}$, $\deg
(v)=0$, $\deg(e)=1$, $\deg(e^{\ast})=-1$. Further, for each $n\in%
\mathbb{Z}
$, the homogeneous component $L_{n}$ is given by
\[
L_{n}=\{%
{\textstyle\sum}
k_{i}\alpha_{i}\beta_{i}^{\ast}\in L:\text{ }|\alpha_{i}|-|\beta_{i}|=n\}.
\]
If $p=e_{1}\cdot\cdot\cdot e_{n}e_{n+1}\cdot\cdot\cdot$ \ \ \ is an infinite
path in $E$, we follow Chen \cite{C} to define, for each $n\geq1$, $\tau^{\leq
n}(p)=e_{1}\cdot\cdot\cdot e_{n}$ \ and $\tau^{>n}(p)=e_{n+1}e_{n+2}\cdot
\cdot\cdot$ \ . Two infinite paths $p,q$ are said to be
\textit{tail-equivalent} if there are positive integers $m,n$ such that
$\tau^{>m}(p)=\tau^{>n}(q)$. This defines an equivalence relation among the
infinite paths in $E$ and the equivalence class containing the path $p$ is
denoted by $[p]$. An infinite path $p$ is said to a \textit{rational path }if
it is tail equivalent to an infinite path $q=ccc\cdot\cdot\cdot$ , where $c$
is a closed path. An infinite path which is not rational is called an
\textit{irrational path}.

\bigskip

\textbf{Convention}: In what follows $E$ denotes an arbitrary graph, $K$
denotes a field and $L$ denotes the Leavitt path algebra $L_{K}(E).$ Also, all
the $L$-modules are assumed to be left $L$-modules.

\section{Minimal Graded Left Ideals and the Graded Socle}

The minimal left (right) ideals of Leavitt path algebras of arbitrary graphs
were determined in \cite{AMMS}. It was proved that $Lv$ is a minimal left
ideal if and only if $v$ is a line point. This can follow from a more general
statement in the settings of semiprime rings $R$. Namely, for an idempotent
$\epsilon$, $R\epsilon$ is a minimal left ideal if and only if $\epsilon
R\epsilon$ is a division ring. In this section, we will use the graded version
of the above statement to determine the graded minimal left (right) ideals.
Specifically, we introduce the concept of a Laurent vertex in a graph and show
that,\ in a Leavitt path algebra $L$, every minimal graded left ideal is
isomorphic to $Lv(n)$ where $v$ is either a Laurent vertex or a line point and
$n\in%
\mathbb{Z}
$. Using this, we obtain our structure theorem\ for the graded socle
$Soc^{gr}(L)$ of a Leavitt path algebra $L$ showing that $Soc^{gr}(L)$ is a
ring direct sum of matrix rings of arbitrary size over $K$ and/or over
$K[x^{t},x^{-t}]$ with appropriate grading, where $t\in%
\mathbb{N}
$.

Recall that a graded $L$-module $M$ is said to be graded-simple, if $0$ and
$M$ are the only graded submodules of $M$. Observe that a graded-simple
$L$-module need not be simple. For example, $M=K[x,x^{-1}]$ is graded-simple
as a $K[x,x^{-1}]$-module. But it is not simple, since there are infinitely
many ideals of $K[x,x^{-1}]$ giving rise to proper submodules of $M$. On the
other hand, a simple module need not be graded-simple. Indeed, if $I$ is the
ideal generated by any irreducible polynomial $p(x)\in K[x,x^{-1}]$, then
$K[x,x^{-1}]/I$ is a simple module over $K[x,x^{-1}]$, but it is not
graded-simple, as it is not even a graded module over $K[x,x^{-1}]$. \ We
shall see in the sequel more occurrences of such examples.

We first start with a few preparatory lemmas.

\begin{lemma}
\label{suspension graded simple}Let $I$ be a graded left ideal of a $%
\mathbb{Z}
$-graded ring $R$. Then $I$ is a minimal graded left ideal if and only if
$I(n)$ is a minimal graded\ left $R$-module for any $n\in%
\mathbb{Z}
$.
\end{lemma}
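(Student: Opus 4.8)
The plan is to prove the equivalence by exploiting the fact that the shift functor $M \mapsto M(n)$ is an isomorphism of the category of graded $R$-modules onto itself. I would first observe that a graded left ideal $I$, viewed as a graded submodule of $R$, is the same data as the graded module $I(0) = I$, and that the shift $I(n)$ is obtained by applying an invertible (indeed self-inverse up to composition) functor. Since an equivalence (or isomorphism) of categories preserves the lattice of subobjects, a graded module is simple in the graded sense precisely when its shift is.

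The key step is to make the correspondence of graded submodules explicit. For a graded $R$-submodule $N \subseteq I$, the shift $N(n)$ is a graded $R$-submodule of $I(n)$, because the containment $N \subseteq I$ is preserved component-by-component: by definition $N(n)_m = N_{n+m} \subseteq I_{n+m} = I(n)_m$ for every $m \in \mathbb{Z}$, and the $R$-action is the same underlying action, only reindexed, so $R_k \cdot N(n)_m = R_k N_{n+m} \subseteq N_{k+n+m} = N(n)_{k+m}$. This shows $N \mapsto N(n)$ sends graded submodules of $I$ to graded submodules of $I(n)$. The inverse map is $P \mapsto P(-n)$, and one checks these are mutually inverse, giving an order-preserving bijection between the graded submodules of $I$ and those of $I(n)$. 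In particular this bijection sends $0$ to $0$ and $I$ to $I(n)$.

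From this bijection the result is immediate. If $I$ is a minimal graded left ideal, then its only graded submodules are $0$ and $I$; hence the only graded submodules of $I(n)$ are $0 = 0(n)$ and $I(n)$, so $I(n)$ is graded-simple, i.e.\ a minimal graded left $R$-module. Conversely, if $I(n)$ is graded-simple for some (equivalently every) $n$, then applying the bijection in the direction $P \mapsto P(-n)$ shows $I = I(n)(-n)$ has only the graded submodules $0$ and $I$, so $I$ is a minimal graded left ideal. I would phrase the statement as ``for any $n$'' but note that a single $n$ suffices for each direction, since the shift by $n$ and by $-n$ are inverse to one another.

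I do not expect a serious obstacle here; the only point requiring care is to distinguish the two notions being equated --- ``minimal graded left ideal'' refers to minimality in the poset of graded one-sided ideals of $R$, while ``minimal graded left $R$-module'' (i.e.\ graded-simple) refers to having no proper nonzero graded submodules --- and to confirm that for a graded submodule of $R$ these two minimality conditions literally coincide when $n=0$, so that the genuine content of the lemma is just the shift-invariance established above. The verification that $R_k N(n)_m \subseteq N(n)_{k+m}$ is routine reindexing and I would state it in one line rather than belabor it.
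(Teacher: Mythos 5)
Your proof is correct; the paper explicitly leaves this lemma to the reader, so there is no proof of record to compare against, but your argument --- that $N \mapsto N(n)$, with inverse $P \mapsto P(-n)$, gives an order-preserving bijection between the graded $R$-submodules of $I$ and those of $I(n)$, so graded-simplicity is shift-invariant --- is precisely the routine verification the authors intended. The one point of substance, namely that for a graded left ideal of $R$ minimality in the lattice of graded left ideals coincides with graded-simplicity as a graded $R$-module (graded submodules of $I$ are exactly the graded left ideals of $R$ contained in $I$), is the point you single out and handle correctly.
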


The proof is left to the reader.

\begin{lemma}
\label{No Bifurcation imples iso} If a vertex $v$ is connected to a vertex $u$
by a path $p$ with no bifurcations, then $Lv(-|p|)\cong_{gr}Lu$ as graded left modules.
\end{lemma}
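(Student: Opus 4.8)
The plan is to realize the isomorphism explicitly by right multiplication by $p$ and by its ghost path $p^{*}$. Write $p=e_{1}\cdots e_{n}$ with $n=|p|$, so that $s(p)=v$ and $r(p)=u$. First I would define two maps of left $L$-modules, $\phi\colon Lv\to Lu$ by $\phi(x)=xp$ and $\psi\colon Lu\to Lv$ by $\psi(y)=yp^{*}$. These are well defined and land in the asserted ideals: since $vp=p$ and $pu=p$ we have $xp=xpu\in Lu$ for $x\in Lv$, and since $up^{*}=p^{*}$ and $p^{*}v=p^{*}$ we have $yp^{*}=yp^{*}v\in Lv$ for $y\in Lu$. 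Because left multiplication commutes with right multiplication, both maps are homomorphisms of left $L$-modules.

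Next I would track degrees to upgrade $\phi$ to a graded isomorphism onto the prescribed shift. Since $p$ is homogeneous of degree $|p|$ and $p^{*}$ of degree $-|p|$, right multiplication by $p$ sends $(Lv)_{m}$ into $(Lu)_{m+|p|}$. Recalling that $Lv(-|p|)_{n}=(Lv)_{n-|p|}$, the map $\phi$ therefore carries the degree-$n$ component of $Lv(-|p|)$ into the degree-$n$ component of $Lu$; that is, $\phi\colon Lv(-|p|)\to Lu$ is a graded homomorphism of degree $0$. Symmetrically, $\psi\colon Lu\to Lv(-|p|)$ is graded of degree $0$.

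It then remains to check that $\phi$ and $\psi$ are mutually inverse. For $\phi\circ\psi$ I would use the telescoping consequence of the (CK-1) relations, namely $p^{*}p=e_{n}^{*}\cdots e_{1}^{*}e_{1}\cdots e_{n}=r(e_{n})=u$, which gives $\phi(\psi(y))=yp^{*}p=yu=y$ for every $y\in Lu$; this half holds for an arbitrary path. The content of the no-bifurcation hypothesis enters in the other composite, where I would show $pp^{*}=v$. Since $p$ has no bifurcations, each vertex $s(e_{i})$ emits only the edge $e_{i}$ and is hence regular, so the (CK-2) relation reads $s(e_{i})=e_{i}e_{i}^{*}$. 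Telescoping from the inside, $e_{n}e_{n}^{*}=s(e_{n})=r(e_{n-1})$ is absorbed by $e_{n-1}$, then $e_{n-1}e_{n-1}^{*}=s(e_{n-1})$, and so on, collapsing $pp^{*}$ down to $e_{1}e_{1}^{*}=s(e_{1})=v$. Consequently $\psi(\phi(x))=xpp^{*}=xv=x$ for every $x\in Lv$, so $\phi$ and $\psi$ are inverse graded $L$-module isomorphisms, proving $Lv(-|p|)\cong_{gr}Lu$.

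I expect the only real obstacle to be the identity $pp^{*}=v$: unlike $p^{*}p=u$, which is automatic, this equality is precisely where the absence of bifurcations is indispensable, since it is the repeated use of (CK-2) at each intermediate source $s(e_{i})$ (each emitting a single edge) that makes the telescoping valid. The degenerate case $|p|=0$, where $p=v=u$ and both maps reduce to the identity, is immediate.
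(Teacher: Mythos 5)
Your proof is correct and follows essentially the same route as the paper: the paper's one-line argument also rests on the identities $pp^{*}=v$ and $p^{*}p=u$ and takes right multiplication by $p$ as the graded isomorphism $Lv(-|p|)\to Lu$. You merely make explicit what the paper leaves to the reader, namely the degree bookkeeping and the CK-2 telescoping (valid because each $s(e_i)$ emits exactly the one edge $e_i$, hence is regular) that yields $pp^{*}=v$.
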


\begin{proof}
Since $pp^{\ast}=v$ and $p^{\ast}p=u$, the map $av\longmapsto ap$ is a graded
isomorphism from $Lv(-|p|)$ to $Lu$.
\end{proof}

The next Lemma was proved for non-graded simple left ideals in \cite{AMMS}.

\begin{lemma}
\label{Simple implies Tree has no bifurcation} If, for a vertex $v$, $Lv$ is a
minimal graded left ideal of $L$, then $T_{E}(v)$ \ does not contain any
bifurcating vertices.
\end{lemma}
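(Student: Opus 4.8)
The plan is to argue by contraposition: assuming that $T_{E}(v)$ contains a bifurcating vertex $w$, I would construct a nonzero graded left ideal properly contained in $Lv$, thereby showing $Lv$ is not a minimal graded left ideal. Since $w\in T_{E}(v)$, there is a path $p$ from $v$ to $w$ (possibly the trivial path $v$ when $v=w$), so that $s(p)=v$, $r(p)=w$, and $p^{\ast}p=w$. Because $w$ is a bifurcation vertex, it emits two distinct edges $e,f$ with $s(e)=s(f)=w$ and $e\neq f$. These are the only graph-theoretic ingredients I would use.

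Next I would form the two elements $\epsilon=pee^{\ast}p^{\ast}$ and $\epsilon'=pff^{\ast}p^{\ast}$. Each is homogeneous of degree $|p|+1-1-|p|=0$, so principal left ideals generated by them are automatically graded; this is precisely the extra point (beyond the non-graded argument of \cite{AMMS}) that makes the conclusion a statement about graded ideals. Using $wе=e$, $wf=f$ together with the CK-1 relations $e^{\ast}e=r(e)$ and $e^{\ast}f=0$, a short computation shows that $\epsilon$ and $\epsilon'$ are idempotents, that $v\epsilon v=\epsilon$ and $v\epsilon' v=\epsilon'$ (so both lie in $vLv$), and, crucially, that $\epsilon\epsilon'=pee^{\ast}ff^{\ast}p^{\ast}=0$ and likewise $\epsilon'\epsilon=0$, since $e^{\ast}f=0$. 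Thus $\epsilon,\epsilon'$ are nonzero orthogonal homogeneous idempotents in $vLv$.

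Finally, since $\epsilon v=\epsilon$, we have $L\epsilon=L\epsilon v\subseteq Lv$, and $L\epsilon$ is a nonzero graded left ideal. To see it is proper, I would suppose $L\epsilon=Lv$; then $v\in L\epsilon$, say $v=y\epsilon$, whence $\epsilon'=v\epsilon'=y\epsilon\epsilon'=0$, contradicting $\epsilon'\neq0$. Therefore $0\subsetneq L\epsilon\subsetneq Lv$, and $Lv$ fails to be a minimal graded left ideal.

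The main obstacle is the properness step: merely exhibiting an idempotent $\epsilon$ inside $vLv$ does not by itself produce a \emph{proper} subideal, and one must certify that something is left over. The orthogonal companion $\epsilon'$, available exactly because the bifurcation supplies a second edge $f$ and the CK-1 relation forces $e^{\ast}f=0$, is what guarantees this; the degree bookkeeping then ensures that the witnessing ideal $L\epsilon$ is graded rather than merely a left ideal.
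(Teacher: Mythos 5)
Your proof is correct, and it takes a noticeably more self-contained route than the paper's, even though both hinge on the same generating idea: at a bifurcation, CK-1 makes $ee^{\ast}$ and $ff^{\ast}$ orthogonal, and this produces a proper nonzero graded subideal. The paper's proof first chooses $u$ to be a bifurcating vertex such that the connecting path $p$ from $v$ to $u$ has \emph{no} bifurcations; this choice is essential there, because it is what makes $pp^{\ast}=v$ (a CK-2 computation), giving the graded isomorphism $Lv(-|p|)\cong_{gr}Lu$ of Lemma \ref{No Bifurcation imples iso}; minimality then transfers to $Lu$ by Lemma \ref{suspension graded simple}, and $Lee^{\ast}$ is exhibited as a nonzero proper graded submodule of $Lu$. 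You instead conjugate the idempotents back to $v$, setting $\epsilon=pee^{\ast}p^{\ast}$ and $\epsilon'=pff^{\ast}p^{\ast}$; this uses only $p^{\ast}p=w$, which holds for \emph{every} path by CK-1 alone, so you need neither the careful choice of a first bifurcating vertex nor the two auxiliary lemmas, and you work with graded submodules of $Lv$ itself rather than transporting minimality along a shift. Your orthogonality argument ($v=y\epsilon$ would force $\epsilon'=y\epsilon\epsilon'=0$) also makes explicit the properness step that the paper leaves implicit (there, $ff^{\ast}\in Lu\setminus Lee^{\ast}$ by the same CK-1 orthogonality). The trade-off: the paper's route reuses lemmas it has already established and needs again later, while yours is more elementary and portable. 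One small point you assert without justification is that $\epsilon,\epsilon'\neq0$; this deserves a line, e.g. $(pe)^{\ast}\epsilon\,(pe)=r(e)\neq0$, using that vertices are nonzero in $L_{K}(E)$ — the paper's proof silently relies on the same fact for $Lee^{\ast}\neq0$.
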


\begin{proof}
Indeed if $T_{E}(v)$ contains bifurcating vertices, choose a bifurcating
vertex $u$ such that there are no bifurcations in the path $p$ connecting $v$
to $u$. By Lemma \ref{No Bifurcation imples iso}, $Lu\cong Lv(-|p|)$ and by
Lemma \ref{suspension graded simple}, $Lu$ is a minimal graded left ideal. Let
$e\neq f$ be two edges with $s(e)=u=s(f)$. Then $Lee^{\ast}$ is a\ non-zero
proper graded submodule of $Lu$ contradicting the fact that $Lu$ is a minimal
graded left ideal.
\end{proof}

\begin{corollary}
\label{Simple implies at most one edge}If $v$ is a vertex such that $Lv$ is a
minimal graded left ideal, then $|s^{-1}(v)|\leq1$.
\end{corollary}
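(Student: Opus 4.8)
The plan is to obtain this as an immediate consequence of Lemma \ref{Simple implies Tree has no bifurcation}. The crucial (and only) observation needed is that a vertex is itself a path of length $0$, so $v\geq v$ and hence $v\in T_{E}(v)$; that is, $v$ belongs to its own tree. This is the one point worth making explicit, since everything else is definitional.

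Given this, I would argue as follows. Suppose $Lv$ is a minimal graded left ideal. By Lemma \ref{Simple implies Tree has no bifurcation}, the tree $T_{E}(v)$ contains no bifurcation vertex. Since $v\in T_{E}(v)$, the vertex $v$ itself cannot be a bifurcation vertex. By definition a vertex is a bifurcation vertex exactly when it emits more than one edge, i.e. when $|s^{-1}(v)|>1$. Hence $|s^{-1}(v)|\leq 1$, as claimed.

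I do not anticipate any real obstacle here: the statement is a direct specialization of the previous lemma to the base vertex $v$ of the tree, and no further grading argument is required beyond the single invocation of Lemma \ref{Simple implies Tree has no bifurcation}. The only thing to guard against is forgetting that $v$ lies in $T_{E}(v)$, which is precisely what forces the conclusion about $v$ rather than merely about the strictly later vertices in the tree.
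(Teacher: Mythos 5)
Your proof is correct and is exactly the argument the paper intends: the corollary is a direct specialization of Lemma \ref{Simple implies Tree has no bifurcation} to the vertex $v$ itself, using the fact that $v\in T_{E}(v)$ since a vertex is a path of length $0$. Nothing further is needed.
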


The following Lemma will be used in the characterization of minimal graded
left ideals of Leavitt path algebras. This is a graded version of a well-known
result on semiprime rings. We will provide a simple proof for the graded
version. Recall that a graded ring $R$ is called graded semiprime if it has no
non-zero nilpotent graded ideals.

\begin{lemma}
\label{eAe graded field implies Le graded simple} Let\ $\Gamma$ be a group,
$R$ a\ semiprime $\Gamma$-graded ring and $\epsilon$ be a homogeneous
idempotent. Then $\epsilon R\epsilon$ is a graded division ring if and only if
$R\epsilon$ (likewise, $\epsilon R$) is a minimal graded left (right) ideal.
If $\epsilon R\epsilon$ is a field, then $R\epsilon$ \ ($\epsilon R$) is a
minimal left (right) ideal.
\end{lemma}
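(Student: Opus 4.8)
The plan is to reduce both equivalences to the behaviour of the ``corner'' $\epsilon R\epsilon$ acting on $R\epsilon$. First I would record two preliminary observations. Since $\epsilon$ is a homogeneous idempotent and $\deg(\epsilon^{2})=\deg(\epsilon)$ forces $\epsilon$ into the identity component, the corner $\epsilon R\epsilon$ is itself a $\Gamma$-graded ring with identity $\epsilon$ and homogeneous components $(\epsilon R\epsilon)_{\gamma}=\epsilon R_{\gamma}\epsilon$. Also, for any $a\in\epsilon R\epsilon$ one has $\epsilon a=a=a\epsilon$, so for nonzero homogeneous $a$ the element $a$ lies in $Ra$ (as $a=\epsilon a$). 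These facts let me pass freely between left ideals of $R$ sitting inside $R\epsilon$ and left ideals of $\epsilon R\epsilon$ via $L\mapsto\epsilon L$.

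For the direction ``$\epsilon R\epsilon$ a graded division ring $\Rightarrow R\epsilon$ minimal graded'', I would take a nonzero graded left ideal $L\subseteq R\epsilon$ and observe that $\epsilon L$ is a graded left ideal of $\epsilon R\epsilon$ (since $\epsilon R\epsilon\cdot\epsilon L\subseteq\epsilon RL\subseteq\epsilon L$, using $RL\subseteq L$). As $\epsilon R\epsilon$ is a graded division ring, its only graded left ideals are $0$ and $\epsilon R\epsilon$, giving the dichotomy $\epsilon L=\epsilon R\epsilon$ or $\epsilon L=0$. In the first case $\epsilon\in\epsilon L\subseteq L$, whence $R\epsilon\subseteq L$ and $L=R\epsilon$. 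The crux is the second case $\epsilon L=0$: here every $l\in L$ satisfies $l=l\epsilon$ and $\epsilon l=0$, so $l_{1}l_{2}=l_{1}(\epsilon l_{2})=0$ and thus $L^{2}=0$; since a semiprime ring has no nonzero nilpotent one-sided ideals, $L=0$, a contradiction. Hence $R\epsilon$ is a minimal graded left ideal.

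For the converse, assume $R\epsilon$ is a minimal graded left ideal and take a nonzero homogeneous $a\in\epsilon R\epsilon$, say of degree $\gamma$. Then $Ra$ is a nonzero graded left ideal contained in $R\epsilon$ (nonzero because $a=\epsilon a\in Ra$), so minimality gives $Ra=R\epsilon$; choosing a homogeneous $r$ with $\epsilon=ra$ and setting $b=\epsilon r\epsilon\in\epsilon R\epsilon$ yields $ba=\epsilon r\epsilon a=\epsilon ra=\epsilon$. Thus every nonzero homogeneous element of $\epsilon R\epsilon$ has a left inverse, and the standard two-line argument (if $cb=\epsilon$ then $c=c\epsilon=c(ba)=(cb)a=a$, so $ab=\epsilon$) upgrades this to two-sided invertibility, making $\epsilon R\epsilon$ a graded division ring. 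Equivalently, one may invoke graded Schur's Lemma together with the graded ring isomorphism $\operatorname{END}_{R}(R\epsilon)\cong(\epsilon R\epsilon)^{\mathrm{op}}$.

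Finally, for the refinement when $\epsilon R\epsilon$ is a field, I would rerun the first argument verbatim but now allowing $L$ to be an arbitrary (not necessarily graded) left ideal of $R$ inside $R\epsilon$: since a field has only the ideals $0$ and itself, $\epsilon L$ is again $0$ or $\epsilon R\epsilon$, and the same two cases give $L=R\epsilon$ or $L^{2}=0$. The only point needing attention is the nilpotent case, which is killed by ordinary semiprimeness; so $R\epsilon$ is a minimal left ideal. The right-ideal statements follow by the symmetric argument applied to $\epsilon R$. I expect the only genuinely delicate step to be the $\epsilon L=0$ case---correctly producing the nilpotent one-sided ideal and invoking semiprimeness---together with the degree bookkeeping needed to keep $r$, $b=\epsilon r\epsilon$, and its inverse homogeneous.
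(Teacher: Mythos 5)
Your proposal is correct, and it takes a route that differs from the paper's in both directions of the equivalence. For the forward implication the paper argues element-wise: given a nonzero homogeneous $a\in R\epsilon$, semiprimeness yields $(aR)^{2}\neq 0$, hence a homogeneous $b$ with $aba\neq 0$; then $\epsilon ba\epsilon$ is a nonzero homogeneous element of the graded division ring $\epsilon R\epsilon$, and its inverse $y$ gives $x=xy\epsilon ba\in Ra$ for every $x\in R\epsilon$, so $Ra=R\epsilon$. You instead run the classical Brauer-style dichotomy through the corner map $L\mapsto \epsilon L$: either $\epsilon L=\epsilon R\epsilon$, which forces $L=R\epsilon$, or $\epsilon L=0$, which forces $L^{2}=0$, excluded by semiprimeness. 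The two uses of semiprimeness are equivalent, but your packaging has a concrete payoff: the final assertion (if $\epsilon R\epsilon$ is a field then $R\epsilon$ is a minimal left ideal) follows by rerunning your argument for ungraded $L$, and you do this explicitly, whereas the paper never actually proves that assertion (it is only quoted earlier in Section 2 as a known fact about semiprime rings). In the converse direction the routes genuinely diverge: the paper treats right multiplication by a homogeneous $\epsilon x\epsilon$ of degree $n$ as a nonzero graded morphism $\theta: R\epsilon(-n)\rightarrow R\epsilon$, uses minimality (plus the shift lemma) to conclude $\theta$ is a graded isomorphism, and reads off a two-sided inverse from $\theta^{-1}(\epsilon)=\epsilon y\epsilon$; you instead extract a homogeneous left inverse from $Ra=R\epsilon$ (with the degree bookkeeping you flag, which does go through since comparing homogeneous components of $\epsilon=ra$ lets one take $r$ homogeneous) and upgrade to two-sided invertibility by the standard computation $c=c(ba)=(cb)a=a$. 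The paper's morphism argument yields two-sidedness automatically; yours is more elementary, avoiding shifted modules and graded Hom altogether. Both arguments are complete and correct.
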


\begin{proof}
Suppose $\epsilon R\epsilon=%
{\displaystyle\bigoplus\limits_{\gamma\in\Gamma}}
\epsilon R_{\gamma}\epsilon$ is a graded\ division ring. Let $0\neq a\in
R\epsilon$ be a homogeneous element. \ It is enough if we show that
$Ra=R\epsilon$. \ Now $aR\neq0$, as $a=a\epsilon\in aR$. \ Since $R$ is
semiprime $(aR)^{2}\neq0$. Hence $aba\neq0$ for some homogeneous element $b$.
\ As $aba=a\epsilon ba\epsilon$, $\epsilon ba\epsilon\neq0$ is a homogeneous
element of the graded division ring $\epsilon R\epsilon$ with identity
$\epsilon$. Consequently, there is a homogeneous element $y$ in $\epsilon
R\epsilon$ such that $y\epsilon ba\epsilon=\epsilon$. Then for any $x\in
R\epsilon$, we have $x=x\epsilon=xy\epsilon ba\in Ra$. Hence $R\epsilon=Ra$,
thus proving $R\epsilon$ is a minimal graded left ideal.

Conversely, suppose $R\epsilon$ is a minimal graded left ideal. Let $\epsilon
x\epsilon$ be a homogeneous element of degree $n$ in $\epsilon R\epsilon$.
Clearly the right multiplication by $\epsilon x\epsilon$ is a non-zero graded
morphism $\theta:R\epsilon(-n)\rightarrow R\epsilon$. Since $R\epsilon$ is a
graded minimal left ideal, $\theta$ is actually a graded isomorphism. Let
$\theta^{-1}(\epsilon)=\epsilon y\epsilon$. Then for any $a\in R\epsilon$ we
have $\theta^{-1}(a)=\theta^{-1}(a\epsilon)=a\theta^{-1}(\epsilon)=a\epsilon
y\epsilon$. Consequently, $\epsilon=\theta^{-1}\theta(\epsilon)=(\epsilon
x\epsilon)(\epsilon y\epsilon)$. Similarly, $\epsilon=\theta\theta
^{-1}(\epsilon)=(\epsilon y\epsilon)(\epsilon x\epsilon)$. \ Hence $\epsilon
x\epsilon$ is invertible and we conclude that $\epsilon R\epsilon$ is a graded
division ring$.$
\end{proof}

\begin{definition}
\label{Defn Laurent vertex}A \ vertex $v$ is called a\textit{\ Laurent vertex}
if $T_{E}(v)$ consists of the set of all vertices on a single path $\gamma=\mu
c$ where $\mu$ is a path without bifurcations starting at $v$ and $c$ is a
\ cycle without exits based on a vertex $u=r(\mu)$.
\end{definition}

\begin{lemma}
\label{corner-line pint-Laurent} For any $v\in E^{0}$, the following hold:

(1) \ If $v$ is a line point, then $vLv\cong_{gr}K$.

(2) If $v$ is a Laurent vertex, then $vLv\cong_{gr}K[x^{n},x^{-n}]$ for some
integer $n\geq1$.
\end{lemma}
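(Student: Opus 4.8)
The plan is to compute the corner $vLv$ directly from the normal form of elements of $L$. Recall that every element of $L$ is a $K$-linear combination of terms $\alpha\beta^{\ast}$ with $\alpha,\beta$ paths, that $\alpha\beta^{\ast}=\alpha\,r(\alpha)r(\beta)\,\beta^{\ast}$ vanishes unless $r(\alpha)=r(\beta)$, and that $v\alpha\beta^{\ast}v=\alpha\beta^{\ast}$ forces $s(\alpha)=s(\beta)=v$. Hence $vLv$ is spanned over $K$ by those monomials $\alpha\beta^{\ast}$ with $s(\alpha)=s(\beta)=v$ and $r(\alpha)=r(\beta)$. The central computational device in both parts is the following collapsing identity: if $\eta=g_{1}\cdots g_{m}$ is a path along which every vertex $s(g_{i})$ emits exactly the single edge $g_{i}$, then repeated use of the CK-2 relation $s(g_{i})=g_{i}g_{i}^{\ast}$ gives $\eta\eta^{\ast}=s(\eta)$.

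For (1), suppose $v$ is a line point, so that $T_{E}(v)$ has neither a bifurcation nor a cycle. Then a single (finite or infinite) line emanates from $v$; for each length $m$ there is at most one path $p_{m}$ starting at $v$, the vertices $r(p_{m})$ are pairwise distinct, and every interior vertex of this line emits exactly one edge. Thus $r(\alpha)=r(\beta)$ forces $\alpha=\beta=p_{m}$ for some $m$, and applying the collapsing identity yields $p_{m}p_{m}^{\ast}=v$. Therefore $vLv=Kv$, which is concentrated in degree $0$, giving the graded isomorphism $vLv\cong_{gr}K$.

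For (2), let $v$ be a Laurent vertex with $\gamma=\mu c$, $u=r(\mu)$, and $c=f_{1}\cdots f_{n}$ a cycle without exits based at $u$. I first treat the base $u$. Because $c$ has no exit, every vertex on $c$ emits exactly one edge, so the collapsing identity gives $cc^{\ast}=u$, while $c^{\ast}c=r(c)=u$; hence $c$ is a homogeneous unit of $uLu$ of degree $n$ with inverse $c^{\ast}$. Every path starting at $u$ has the form $c^{a}p_{i}$ with $p_{i}=f_{1}\cdots f_{i}$ and $0\leq i<n$, and $r(c^{a}p_{i})=r(c^{b}p_{j})$ forces $i=j$; using $p_{i}p_{i}^{\ast}=u$ (again the collapsing identity) each spanning monomial of $uLu$ reduces to $c^{a}(c^{\ast})^{b}=c^{a-b}$. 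Thus $uLu=K[c,c^{\ast}]$, and the assignment $c\mapsto x^{n}$ identifies this graded ring with $K[x^{n},x^{-n}]$. To transport this to $v$, note that $\mu$ is bifurcation-free, so $\mu\mu^{\ast}=v$ and $\mu^{\ast}\mu=u$. The maps $\phi\colon uLu\to vLv$, $\phi(x)=\mu x\mu^{\ast}$, and $\psi\colon vLv\to uLu$, $\psi(y)=\mu^{\ast}y\mu$, are mutually inverse $K$-algebra homomorphisms, and each preserves degree since conjugation by $\mu$ adds $|\mu|$ and then subtracts it. Hence $\phi$ is a graded ring isomorphism and $vLv\cong_{gr}uLu\cong_{gr}K[x^{n},x^{-n}]$ with $n=|c|\geq 1$.

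I expect the main obstacle to be the bookkeeping in the collapsing identity: at each step one must verify that the vertex in question is a regular vertex emitting a single edge, so that the CK-2 relation applies. This is precisely where the hypotheses enter, namely that $v$ is a line point in (1), and that $\mu$ has no bifurcations and $c$ has no exits in (2). The remaining steps are formal manipulations, consistent with the module isomorphism of Lemma \ref{No Bifurcation imples iso}.
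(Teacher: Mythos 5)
Your proof is correct and follows essentially the same route as the paper: both parts reduce the spanning monomials of the corner ring via the CK-2 collapsing relation $pp^{\ast}=s(p)$ along bifurcation-free (resp.\ exit-free) segments, and both identify $uLu$ with $K[x^{n},x^{-n}]$ by sending $c\mapsto x^{n}$. The only difference is cosmetic: in (2), where the paper transports the computation from $u$ back to $v$ through the graded module isomorphism of Lemma \ref{No Bifurcation imples iso} and endomorphism rings ($vLv\cong_{gr}End(Lv)\cong_{gr}End(Lu)\cong_{gr}uLu$), you use the explicit mutually inverse conjugations $x\mapsto\mu x\mu^{\ast}$ and $y\mapsto\mu^{\ast}y\mu$, which is precisely the map underlying the paper's argument made explicit, and which conveniently sidesteps any bookkeeping of shifts.
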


\begin{proof}
Now the statement (1) is well-known without the isomorphism being a graded
isomorphism (see e.g. Proposition 2.7, \cite{AMMS}). Here is a short proof to
get a graded version of this statement. We first show that if $p$ and $q$ are
paths such that $v=s(p)=s(q)$, then $q=pt$ or $p=qs$, where $s,t$ are paths in
$E$. Suppose first the $|q|\geq|p|$. If $|p|=0$, then $p=v$ and $q=vq$. We
proceed by induction on $|p|$. Suppose the statement holds when $|p|=n\geq0$.
Let $|p|=n+1$. Write $p=ep^{\prime}$ and $q=fq^{\prime}$ where $e,f\in E^{1}$.
Since $v$ is a line point, $e=f$ and $r(e)$ is also a line point. By
induction, $q^{\prime}=p^{\prime}t$. Then $q=fq^{\prime}=ep^{\prime}t=pt$. The
case when $|p|>|q|$ is similar.

Now consider the non-zero monomial $vpq^{\ast}v$. Note that $s(p)=s(q)=v$. If
$|q|\geq|p|$, then $q=pt$ by the above argument. Thus $vpq^{\ast}v=vpt^{\ast
}p^{\ast}v$. It follows that $s(t)=r(t)$. Since $v$ is a line point, $t$ has
to be a vertex, namely, $r(p)$. Then $vpq^{\ast}v=vpt^{\ast}p^{\ast}v=v$. The
case when $|p|>|q|$ is similar. Thus $vLv\cong_{gr}K$ where $K$ is a graded
ring concentrated in degree $0$.

(2) Since $v$ is a Laurent vertex, $T_{E}(v)$ consists of a single path $\mu
c$ where $\mu$ is a path\ without bifurcations starting at $v$ and ending at
$u$ and, $c$ is a cycle of length $n$ without exits based at $u.$ By Lemma
\ref{No Bifurcation imples iso}, $\ Lv(-n)\cong_{gr}Lu$. So%
\[
vLv\cong_{gr}End(Lv(-n))\cong_{gr}End(Lu)\cong_{gr}uLu
\]
. First observe that, since $c$ is a cycle without exits, $cc^{\ast}%
=u=c^{\ast}c$ and any non-zero term $upq^{\ast}u$, where $p$ and $q$ are
paths, simplifies to an integer power of $c$ or $c^{\ast}$. Consequently, if
$a=%
{\displaystyle\sum\limits_{i}}
uk_{i}p_{i}q_{i}^{\ast}u$ is a non-zero element of $uLu$, then $a$ is a
$K$-linear sum of powers of $c$ and $c^{\ast}$. Also note that in the graded
subring $uLu$, $\deg(c)=n$, the length of $c$ and $\deg(c^{\ast})=-n$. Then
the map sending $u$ to $1$, $c$ to $x^{n}$ and $c^{\ast}$ to $x^{-n}$ defines
a graded isomorphism of $uLu$ to $K[x^{n},x^{-n}]$ whose grading is induced by
that of $K[x,x^{-1}]$.
\end{proof}

\begin{proposition}
\label{Lv simple implies line point or Laurent} Let $v\in E^{0}$. Then $Lv$ is
a minimal graded left ideal if and only if $v$ is either a line point or a
Laurent vertex.
\end{proposition}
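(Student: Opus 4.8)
The plan is to prove the two implications separately, drawing the backward direction from the corner computation in Lemma~\ref{corner-line pint-Laurent} together with the graded semiprime criterion of Lemma~\ref{eAe graded field implies Le graded simple}, and the forward direction from the bifurcation analysis of Lemma~\ref{Simple implies Tree has no bifurcation}.

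For the backward implication, suppose $v$ is a line point or a Laurent vertex. In either case $v$ is a homogeneous idempotent of degree $0$, and $L$ is semiprime, so Lemma~\ref{eAe graded field implies Le graded simple} applies with $\epsilon=v$. By Lemma~\ref{corner-line pint-Laurent}, $vLv\cong_{gr}K$ when $v$ is a line point and $vLv\cong_{gr}K[x^{n},x^{-n}]$ for some $n\geq1$ when $v$ is a Laurent vertex. Both $K$ and $K[x^{n},x^{-n}]$ are graded fields: every nonzero homogeneous element is a nonzero scalar multiple of a power of the variable, hence invertible. Thus $vLv$ is a graded division ring, and Lemma~\ref{eAe graded field implies Le graded simple} gives that $Lv$ is a minimal graded left ideal.

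For the forward implication, assume $Lv$ is a minimal graded left ideal. By Lemma~\ref{Simple implies Tree has no bifurcation}, $T_{E}(v)$ contains no bifurcating vertex; equivalently (Corollary~\ref{Simple implies at most one edge}) every vertex in $T_{E}(v)$ emits at most one edge. The out-going path starting at $v$ is therefore uniquely determined step by step, producing a finite or infinite sequence of vertices $v=v_{0},v_{1},v_{2},\dots$ with $v_{i+1}=r(e_{i})$ for the unique edge $e_{i}$ out of $v_{i}$. I would then split according to whether $T_{E}(v)$ contains a cycle. If it contains no cycle, then $T_{E}(v)$ has neither bifurcations nor cycles, so $v$ is by definition a line point. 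If it contains a cycle, let $v_{j}$ be the first repeated vertex, say $v_{j}=v_{i}$ with $i<j$; then $c:=e_{i}e_{i+1}\cdots e_{j-1}$ is a cycle based at $u:=v_{i}$, and $\mu:=e_{0}\cdots e_{i-1}$ is a bifurcation-free path from $v$ to $u$. Because every vertex on $c$ emits a single edge (the next one along $c$), the cycle $c$ has no exit, and because there are no bifurcations every vertex of $T_{E}(v)$ lies on $\mu c$. Hence $T_{E}(v)=(\mu c)^{0}$ with $\mu$ bifurcation-free and $c$ a cycle without exits, which is precisely the definition of a Laurent vertex (Definition~\ref{Defn Laurent vertex}).

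The routine half is the backward direction, which is essentially an application of the two quoted lemmas. The main work lies in the forward direction, and specifically in extracting the exact shape $\mu c$ from the single hypothesis that $T_{E}(v)$ has no bifurcations: one must argue that determinism of the out-path forces any cycle encountered to be exit-free and to absorb the entire tree, leaving no room for extra vertices or branches. Care is needed to treat uniformly the subcases where the out-path is finite (ending at a sink), infinite without repetition (an infinite ray), and eventually periodic.
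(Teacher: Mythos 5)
Your proof is correct and follows essentially the same route as the paper: the backward direction via Lemma~\ref{corner-line pint-Laurent} and Lemma~\ref{eAe graded field implies Le graded simple}, and the forward direction via Lemma~\ref{Simple implies Tree has no bifurcation} followed by a case split on whether $T_{E}(v)$ contains a cycle. Your forward direction merely spells out in more detail the deterministic-path argument that the paper states briefly ("$T_{E}(v)$ consists of a single path $\mu c$"), which is a welcome but not substantively different elaboration.
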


\begin{proof}
Suppose $Lv$ is a minimal graded left ideal. Assume $v$ is not a line point.
Then $T_{E}(v)$ contains \ a vertex $u$ which is either a bifurcating vertex
or is the base of a cycle. \ By Lemma
\ref{Simple implies Tree has no bifurcation}, $T_{E}(v)$ contains no
bifurcations and so $u$ is the base of a cycle $c$ which obviously has no
exits. In this case, $T_{E}(v)$ \ consists of a single path $\gamma=\mu c$
where $\mu$ is a path without bifurcations with $v=s(\mu)$ and $u=r(\mu
)=s(c)$. Thus $v$ is a Laurent vertex.

Conversely, suppose $v$ is a line point. Then, by Lemma
\ref{corner-line pint-Laurent}, $vLv\cong_{gr}K$ and so $Lv$ is a minimal left
ideal by Lemma \ref{eAe graded field implies Le graded simple}. $Lv$ is also a
graded left ideal since $v$ is a homogeneous idempotent of degree $0$.
Suppose, on the other hand, $v$ is a Laurent vertex. By Lemma
\ref{corner-line pint-Laurent}, $vLv$ is graded isomorphic to the graded
division ring $K[x^{n},x^{-n}]$, where $n\geq1$. By Lemma
\ref{eAe graded field implies Le graded simple}, $Lv$ is a minimal graded left ideal.
\end{proof}

\begin{proposition}
\label{graded simple iso} A\ graded left ideal $S$ of $L$ is a minimal graded
left ideal of $L$ if and only if $S=La$ where $a$ is a homogeneous element and
$La\cong_{gr}Lu(n)$, \ where $u$ is either a line point or a Laurent vertex
and $n\in%
\mathbb{Z}
$.
\end{proposition}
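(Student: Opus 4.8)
The backward implication is immediate from the results already in hand: if $S=La$ with $La\cong_{gr}Lu(n)$ for a line point or Laurent vertex $u$, then $Lu$ is a minimal graded left ideal by Proposition \ref{Lv simple implies line point or Laurent}, hence $Lu(n)$ is a minimal graded left $L$-module by Lemma \ref{suspension graded simple}, and transporting the lattice of graded submodules along the graded isomorphism $S\cong_{gr}Lu(n)$ shows that $S$ itself is a minimal graded left ideal. So the whole content lies in the forward direction, for which the plan is to show that an arbitrary minimal graded left ideal is graded-isomorphic to a shift of $Lv$ for an honest vertex $v$, and then to invoke Proposition \ref{Lv simple implies line point or Laurent}.

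First I would produce a homogeneous idempotent generator. Given a minimal graded left ideal $S$, pick any nonzero homogeneous $a\in S$; since $La$ is a nonzero graded left submodule of $S$, minimality forces $S=La$. Because $L$ is graded von Neumann regular (\cite{H-2}), there is a homogeneous $b$ with $a=aba$; then $\epsilon=ba$ is a homogeneous idempotent of degree $0$, and the standard computation ($a\epsilon=a$, whence $\ell a=(\ell a)\epsilon\in L\epsilon$ and conversely $\epsilon=ba\in La$) gives $S=La=L\epsilon$ with $\epsilon\in L_{0}$. By Lemma \ref{eAe graded field implies Le graded simple} (noting $L$ is semiprime, hence graded semiprime) $\epsilon L\epsilon$ is a graded division ring, consistent with Lemma \ref{corner-line pint-Laurent}; but the real task is to locate a vertex.

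The key step is to replace $\epsilon$ by a path projection. Since $S=L\epsilon$ is graded-simple, its degree-zero component $S_{0}=L_{0}\epsilon$ is a minimal left ideal of $L_{0}$ (this is the correspondence between graded minimal left ideals of $L$ and minimal left ideals of $L_{0}$ recalled in the Introduction). Now $L_{0}$ is the ultramatricial algebra $\varinjlim L_{0,n}$ whose matrix units are the monomials $\alpha\beta^{\ast}$ with $|\alpha|=|\beta|$; as $\epsilon$ is a finite sum of such monomials it lies in a finite semisimple corner of some $L_{0,n}$, where a minimal idempotent is Murray--von Neumann equivalent, via degree-$0$ elements, to a diagonal matrix unit $\alpha\alpha^{\ast}$. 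Since this equivalence is implemented by homogeneous elements of degree $0$, it yields a graded isomorphism $L\epsilon\cong_{gr}L\alpha\alpha^{\ast}$. Finally, using $\alpha^{\ast}\alpha=v$ for $v=r(\alpha)$, the assignment $\ell\alpha\alpha^{\ast}\mapsto\ell\alpha$ is a graded isomorphism $L\alpha\alpha^{\ast}\cong_{gr}Lv(|\alpha|)$, in the spirit of Lemma \ref{No Bifurcation imples iso}. Composing, $S\cong_{gr}Lv(n)$ with $n=|\alpha|$; then $Lv(n)$ is a minimal graded left module, so $Lv$ is a minimal graded left ideal by Lemma \ref{suspension graded simple}, whence $v$ is a line point or a Laurent vertex by Proposition \ref{Lv simple implies line point or Laurent}, and taking $u=v$ finishes the forward direction.

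The main obstacle is the reduction in the third paragraph: turning the abstract homogeneous idempotent $\epsilon$ into a concrete path projection $\alpha\alpha^{\ast}$. This needs a careful use of the ultramatricial structure of $L_{0}$ — in particular, confirming that $\epsilon$ actually sits inside a genuinely finite matrix corner (which is delicate when $E^{0}$ or some $s^{-1}(v)$ is infinite, so that $L_{0}$ is a non-unital direct limit of possibly large matrix rings) and that the Murray--von Neumann equivalence to $\alpha\alpha^{\ast}$ can be realized by degree-$0$ elements, so that no unexpected grading shift appears beyond the $|\alpha|$ contributed by the last isomorphism. Once such a vertex has been located, everything else is formal.
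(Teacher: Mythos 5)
Your outer reductions are correct and even elegant: picking a nonzero homogeneous $a\in S$ forces $S=La$ by graded minimality; graded von Neumann regularity (\cite{H-2}) gives a homogeneous $b$ of degree $-\deg(a)$ with $a=aba$, so $\epsilon=ba$ is a degree-zero idempotent with $S=L\epsilon$; and the final map $\ell\alpha\alpha^{\ast}\mapsto\ell\alpha$ is indeed a graded isomorphism $L\alpha\alpha^{\ast}\cong_{gr}Lv(|\alpha|)$ for $v=r(\alpha)$. But the step you yourself flag as the ``main obstacle'' is a genuine gap, not a deferred verification, and the structural claim it rests on is false as stated for arbitrary graphs. It is not true that $L_{0}$ is a direct limit of matrix algebras whose matrix units are the monomials $\alpha\beta^{\ast}$ with $|\alpha|=|\beta|$: at an infinite emitter the CK-2 relation is unavailable, and the corresponding blocks of $L_{0}$ are unitization-type algebras rather than full matrix algebras. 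Concretely, if $E$ has two vertices $v,w$ and infinitely many edges $e_{1},e_{2},\dots$ from $v$ to $w$, the block of $L_{0}$ containing $v$ is $Kv+\sum_{i,j}Ke_{i}e_{j}^{\ast}$, the unitization of $M_{\mathbb{N}}(K)$, whose idempotents include $v-\sum_{i\in F}e_{i}e_{i}^{\ast}$ ($F$ finite) --- not monomial projections. So the assertion that $\epsilon$ sits in a finite semisimple corner of some $L_{0,n}$ in which a minimal idempotent is Murray--von Neumann equivalent, via degree-zero elements, to some $\alpha\alpha^{\ast}$ is precisely what needs proof; establishing it for arbitrary graphs requires the block description of $L_{0}$ with infinite emitters handled, plus an argument that minimal idempotents avoid the adjoined units. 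Note also that $L$ need not have an identity, so the unital correspondence between graded minimal left ideals of $L$ and minimal left ideals of $L_{0}$ that you quote from the introduction does not apply verbatim; indeed the introduction itself warns that ``it may not always be possible to describe all the minimal left ideals of $R_{0}$,'' which is exactly why the paper takes a different route.

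The paper replaces your third paragraph by one citation. By Proposition 3.1 of \cite{AMMS}, applied to the homogeneous generator $a$, there are paths $\mu,\nu$ such that $\mu^{\ast}a\nu=kw$ for a vertex $w$, or $\mu^{\ast}a\nu=f(c,c^{\ast})$ with $c$ a cycle without exits; homogeneity of $a$ forces the polynomial to be a single monomial $kc^{n}(c^{\ast})^{m}$, and multiplying by further powers of $c$ reduces it to $k^{\prime}u$ for a vertex $u$. Thus $\alpha^{\ast}a\beta=ku$ for suitable paths $\alpha,\beta$, and right multiplication by $\beta$ gives a nonzero graded morphism $La(-|\beta|)\rightarrow Lu$, which is an isomorphism by graded minimality; Proposition \ref{Lv simple implies line point or Laurent} then identifies $u$ as a line point or Laurent vertex. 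If you want to keep your idempotent set-up, the cleanest repair is to apply that same proposition of \cite{AMMS} to $\epsilon$ --- but at that point the idempotent does no work and you have reproduced the paper's argument. As written, your proof is incomplete at its central step.
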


\begin{proof}
Suppose $\ S=La$ is\ a minimal graded left ideal. We can assume that $a$ is a
homogeneous element. This is because, if $a=a_{1}+\cdot\cdot\cdot+a_{m}$ where
the $a_{i}$ are homogeneous, then each $a_{i}\in S$ as $S$ is graded. Moreover
$La_{i}=S$ by graded-simplicity. So write $S=La$ for some homogeneous element
$a$. By (Proposition 3.1, \cite{AMMS}), there exist paths $\mu,\nu$ such that
$\mu^{\ast}a\nu=kw$ or a polynomial $f(c,c^{\ast})$, where $w$ is a vertex,
$k\in K$, $f(x,x^{-1})\in K[x,x^{-1}]$ and $c$ is a cycle without exits.
Consider the case when $\ \mu^{\ast}a\nu=f(c,c^{\ast})$. Since $\mu^{\ast}%
a\nu$ is homogeneous monomial, $\mu^{\ast}a\nu=k_{i}c^{n_{i}}(c^{\ast}%
)^{m_{i}}$ where $k_{i}\in K$ and $m_{i},n_{i}$ are integers. Then
$\gamma^{\ast}a\delta=k^{\prime}u$ where $\gamma=\mu c^{n_{i}}$, $\delta=\nu
c^{m_{i}}$ and $k^{\prime}\in K$. Thus, in either case, for suitable paths
$\alpha,\beta$ with $\beta$ having no bifurcations, we get that $\alpha^{\ast
}a\beta=ku$ where $u$ is a vertex and $k\in K$. Now the map $\phi
:La(-|\beta|)\rightarrow Lu$ given by $\phi(xa)=xa\beta$ is a non-zero graded
morphism and is indeed a graded isomorphism by the simplicity of $La$. Thus
\ $La(-|\beta|)\cong_{gr}Lu$. By Proposition
\ref{Lv simple implies line point or Laurent}, $u$ is either a line point or a
Laurent vertex.
\end{proof}

From the preceding results, we obtain a description of the graded socle of a
Leavitt path algebra $L$. Recall that for a ring $R$, $Soc^{gr}(R)$ denotes
the sum of all\ minimal graded left ideals of $R$. Similar to the non-graded
case, for semi-prime graded rings $R$ (such as Leavitt path algebras $L$), the
left and the right graded socles coincide. Even though $Soc(L)$ is a graded
ideal, it need not be equal to $Soc^{gr}(L)$, as is clear from considering
$L=K[x,x^{-1}]$. However, as noted in \cite{NvO}, $Soc(R)\subseteq
Soc^{gr}(R)$, for any graded ring $R$. For a Leavitt path algebra $L$, one can
also derive this from Proposition
\ref{Lv simple implies line point or Laurent} and the fact that $Soc(L)$ is
the sum of all the left ideals $Lv$ where $v$ is a line point \cite{AMMS}.

\bigskip

\textbf{Grading of a matrix ring over a }$%
\mathbb{Z}
$\textbf{-graded ring:} In preparation of the structure theorem on graded
socle of $L$ and also for use in Section 6, we wish to recall the grading of
matrices of finite order and then indicate how to extend this to the case of
infinite matrices in which at most finitely many entries are non-zero (see
\cite{H-1} and \cite{NvO}).

Let $\Gamma$ be an additive abelian group, $A$ be a $\Gamma$-graded ring and
$(\delta_{1},\cdot\cdot\cdot,\delta_{n})$ an $n$-tuple where $\delta_{i}%
\in\Gamma$. Then $M_{n}(A)$ is a $\Gamma$-graded ring with the $\lambda
$-homogeneous components of $n\times n$ matrices%

\[
M_{n}(A)(\delta_{1},\cdot\cdot\cdot,\delta_{n})_{\lambda}=\left(
\begin{array}
[c]{cccccc}%
A_{\lambda+\delta_{1}-\delta_{1}} & A_{\lambda+\delta_{2}-\delta_{1}} & \cdot
& \cdot & \cdot & A_{\lambda+\delta_{n}-\delta_{1}}\\
A_{\lambda+\delta_{1}-\delta_{2}} & A_{\lambda+\delta_{2}-\delta_{2}} &  &  &
& A_{\lambda+\delta_{n}-\delta_{2}}\\
&  &  &  &  & \\
&  &  &  &  & \\
&  &  &  &  & \\
A_{\lambda+\delta_{1}-\delta_{n}} & A_{\lambda+\delta_{2}-\delta_{n}} &  &  &
& A_{\lambda+\delta_{n}-\delta_{n}}%
\end{array}
\right)  .\qquad\ \ (1)
\]

This shows that for each homogeneous element $x\in A$,
\[
\deg(e_{ij}(x))=\deg(x)+\delta_{i}-\delta_{j}\text{,}\qquad\qquad\qquad
\qquad(2)
\]
where $e_{ij}(x)$ is a matrix with $x$ in the $ij$-position and with every
other entry $0$.

Now let $A$ be a $\Gamma$-graded ring and let $I$ be an arbitrary infinite
index set. Denote by $M_{I}(A)$ the matrix with entries indexed by $I\times I$
having all except finitely many entries non-zero and for each $(i,j)\in
I\times I$, the $ij$-position is denoted by $e_{ij}(a)$ where $a\in A$.
Considering a "vector" $\bar{\delta}:=(\delta_{i})_{i\in I}$ where $\delta
_{i}\in\Gamma$ and following the usual grading on the matrix ring \ (see
(1),(2)), define, for each homogeneous element $a$,%
\[
\deg(e_{ij}(a))=\deg(a)+\delta_{i}-\delta_{j}\text{.}\qquad\qquad\qquad
\qquad(3)
\]
This makes $M_{I}(A)$ a $\Gamma$-graded ring, which we denote by
$M_{I}(A)(\bar{\delta})$. Clearly, if $I$ is finite with $|I|=n$, then the
graded ring coincides (after a suitable permutation) with $M_{n}(A)(\delta
_{1},\cdot\cdot\cdot,\delta_{n})$.

Suppose $E$ is a finite acyclic graph consisting of exactly one sink $v$. Let
$\{p_{i}:1\leq i\leq n\}$ be the set of all paths ending at $v$. Then it was
shown in (Lemma 3.4, \cite{AAS})
\[
L_{K}(E)\cong M_{n}(K)\qquad\qquad\qquad(4)
\]
under the map $p_{i}p_{j}^{\ast}\longmapsto e_{ij}$. Now taking into account
the grading of $M_{n}(K)$, it was further shown in (Theorem 4.14, \cite{H-1})
that the same map induces a graded isomorphism
\[
L_{K}(E)\longrightarrow M_{n}(K)(|p_{1}|,\cdot\cdot\cdot,|p_{n}|)\qquad
\qquad\qquad(5)
\]%
\[
p_{i}p_{j}^{\ast}\longmapsto e_{ij\text{.}}%
\]
In the case of a comet graph $E$ (that is, a finite graph $E$ in which every
path eventually ends at a vertex on a cycle $c$ without exits), it was shown
in \cite{AAS-1} that the map
\[
L_{K}(E)\longrightarrow M_{n}(K[x,x^{-1}])\qquad\qquad\qquad(6)
\]%
\[
\text{ }p_{i}c^{k}p_{j}^{\ast}\longmapsto e_{ij}(x^{k})
\]
induces an isomorphism. Again taking into account the grading, it was shown in
(Theorem 4.20, \cite{H-1}) that the map%
\[
L_{K}(E)\longrightarrow M_{n}(K[x^{|c|},x^{-|c|})(|p_{1}|,\cdot\cdot\cdot
\cdot,|p_{n}|)\qquad\qquad\qquad(7)
\]%
\[
\text{ }p_{i}c^{k}p_{j}^{\ast}\longmapsto e_{ij}(x^{k|c|}).
\]
induces a graded isomorphism. Later in the paper \cite{AAPS}, the isomorphisms
(4) and (6) were extended to infinite acyclic and infinite comet graphs
respectively \ (see Proposition 3.6 \cite{AAPS}). The same isomorphisms with
the grading adjustments will induce graded isomorphisms for Leavitt path
algebras of such graphs. We now describe this extension below.

Let $E$ be a graph such that no cycle in $E$ has an exit and such that every
infinite path contains a line point or is tail-equivalent to a rational path
$ccc\cdot\cdot\cdot$ ... \ where $c$ is a cycle (without exits). \ Define an
equivalence relation in the set of all line points in $E$ by setting $u\sim v$
if $T_{E}(u)\cap T_{E}(v)\neq\emptyset$\ \ Let \ $X$ be the set of
representatives of distinct equivalence classes of line points in $E$, so that
for any two line points $u,v\in X$ with $u\neq v$, $T_{E}(u)\cap
T_{E}(v)=\emptyset$. For each vertex \ $v_{i}\in X$, let $\overset{-}{p^{v_{i}%
}}:=\{p_{s}^{v_{i}}:s\in\Lambda_{i}\}$ be the set of all paths that end at
$v_{i}$, where $\Lambda_{i}$ is an index set which could possibly be infinite.
Denote by $|\overset{-}{p^{v_{i}}|}=\{|p_{s}^{v_{i}}|:s\in\Lambda_{i}\}$.

Let $Y$ be the set of all distinct cycles in $E$. As before, for each cycle
$c_{j}\in Y$ based at a vertex $w_{j}$, let $\overset{-}{q^{w_{j}}}%
:=(q_{r}^{w_{j}}:r\in\Upsilon_{j}\}$ be the set of all paths that end at
$w_{j}$ that do not include all the edges of $c_{j}$ where $\Upsilon_{j}$ is
an index set which could possibly be infinite. Let $|$ $\overset{-}{q^{w_{j}}%
}|:=\{|q_{r}^{w_{j}}|:r\in\Upsilon_{j}\}$. Then the isomorphisms (5) and (7)
extend to a $%
\mathbb{Z}
$-graded isomorphism%
\[
L_{K}(E)\cong_{gr}%
{\displaystyle\bigoplus\limits_{v_{i}\in X}}
M_{\Lambda_{i}}(K)(|\overset{-}{p^{v_{i}}|)}\oplus%
{\displaystyle\bigoplus\limits_{w_{j}\in Y}}
M_{\Upsilon_{j}}(K[x^{|c_{j}|},x^{-|c_{j}|}])(|\overset{-}{q^{w_{j}}}%
|)\qquad(8)
\]
where the grading is as in (3).

We are now ready to prove the main theorem of this section.

\begin{theorem}
\label{Graded Socle}Let $E$ be an arbitrary graph and $L=L_{K}(E)$. Then the
graded socle $Soc^{gr}(L)$ is the two-sided ideal generated by the set
consisting of all the line points and all the Laurent vertices in $E$ and
\[
Soc(L)^{gr}\cong_{gr}%
{\displaystyle\bigoplus\limits_{v_{i}\in X}}
M_{\Lambda_{i}}(K)(|\overset{-}{p^{v_{i}}|)}\oplus%
{\displaystyle\bigoplus\limits_{w_{j}\in Y}}
M_{\Upsilon_{j}}(K[x^{t_{j}},x^{-t_{j}}])(|\overset{-}{q^{w_{j}}}|)
\]
where $\Lambda_{i},\Upsilon_{j}$ are suitable index sets, the $t_{j}$ are
positive integers, \ $X$ is the set of representatives of distinct equivalence
classes of line points in $E$ and $Y$ is the set of all distinct cycles
(without exits) in $E$.
\end{theorem}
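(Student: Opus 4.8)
The plan is to prove the two assertions of the theorem separately: first identify $Soc^{gr}(L)$ as a two-sided ideal generated by the distinguished vertices, then compute its graded structure via the decomposition (8). Let me sketch how I would approach each.
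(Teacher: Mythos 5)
There is no proof here to evaluate: your proposal consists of a single sentence announcing that you will prove the two assertions separately, which is simply a restatement of the theorem's structure, and the promised sketch never materializes. Every mathematical idea is missing. For the first assertion you would need both inclusions: that the ideal $I$ generated by the line points and Laurent vertices lies in $Soc^{gr}(L)$ — which the paper gets from Proposition \ref{Lv simple implies line point or Laurent} ($Lv$ is a minimal graded left ideal precisely when $v$ is a line point or Laurent vertex) — and, conversely, that every minimal graded left ideal lies in $I$. The converse is the nontrivial direction: the paper invokes Proposition \ref{graded simple iso} (and its proof) to write an arbitrary minimal graded left ideal as $La$ with $a$ homogeneous and $La(-|\beta|)=Lv\beta^{\ast}$ for some such vertex $v$ and path $\beta$, then uses that $I$ is a graded ideal to absorb the shift. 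Nothing in your proposal indicates you see that this reduction is the crux, or how you would carry it out.

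For the second assertion, citing ``the decomposition (8)'' does not suffice, because (8) is stated for a graph all of whose vertices already connect to line points or cycles without exits, whereas here you must apply it to the ideal $Soc^{gr}(L)$ inside an arbitrary $L$. The paper's route is: split $Soc^{gr}(L)=A\oplus B$ where $A$ is the ideal generated by the line points and $B$ by the Laurent vertices, proving $A\cap B=0$ via the disjointness of the saturated closures of the two vertex sets (Lemma 1.4 of \cite{ARS}); then invoke the ungraded isomorphisms $A\cong\bigoplus_{i}M_{\Lambda_{i}}(K)$ and $B\cong\bigoplus_{j}M_{\Upsilon_{j}}(K[x^{t_{j}},x^{-t_{j}}])$ from \cite{AAPS} and check that these maps respect the gradings after the shift adjustments recorded in (3), (5) and (7). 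Each of these steps — the direct-sum splitting, the disjointness argument, the upgrade from algebraic to graded isomorphism — is a substantive claim that your proposal neither states nor addresses, so as it stands the proposal is an outline of intent rather than a proof.
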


\begin{proof}
Let $I$ be the two-sided ideal of $L$ generated by all the line points and all
the Laurent vertices in $E$. Now $I\subseteq Soc^{gr}(L)$ since, by
Proposition \ref{Lv simple implies line point or Laurent}, $Lu\subset
Soc^{gr}(L)$ if $u$ is a line point or a Laurent vertex. To prove the reverse
inclusion, suppose $S=La$ is a minimal graded left ideal of $L$, where $a$ is
a homogeneous element. It is clear from Proposition \ref{graded simple iso}
and its proof,\ that there is a vertex $v$ which is either a line point or a
Laurent vertex and a path $\beta$ without exits such that $La(-|\beta
|)=Lv\beta^{\ast}$. Consequently, $La(-|\beta|)\subset I$. Since $I$ is a
graded ideal, $S=La\subset I$. This proves that $I=Soc^{gr}(L)$. \ Let $A$ be
the two-sided ideal generated by the set $H$ of all the line points in $E$ and
$B$ be the two-sided ideal generated by the set $H^{\prime}$ of all the
Laurent vertices in $E$. Clearly, $H\cap H^{\prime}=\varnothing$. This implies
that the intersection $\bar{H}\cap\bar{H}^{\prime}$ of the saturated closures
of $H,H^{\prime}$ is empty (See Lemma 1.4, \cite{ARS}). Hence $A\cap B=0$. So
$Soc^{gr}(L)=A\oplus B$. By Theorem 3.7 and Propositions 3.5 and 3.6 of
\cite{AAPS}, there are algebraic isomorphisms $A\cong%
{\displaystyle\bigoplus\limits_{i\in I}}
M_{\Lambda_{i}}(K)$\ and $B\cong%
{\displaystyle\bigoplus\limits_{j\in J}}
M_{\Upsilon_{j}}(K[x^{t_{j}},x^{-t_{j}}])$ where $\Lambda_{i},\Upsilon_{j}$
are arbitrary index sets and $t_{j}$ are integers $\geq1$. Now $A$ and $B$ are
graded ideals of $L$ and the indicated isomorphisms induce the graded
isomorphism stated in (8) above, thus showing that
\[
Soc(L)^{gr}\cong_{gr}%
{\displaystyle\bigoplus\limits_{v_{i}\in X}}
M_{\Lambda_{i}}(K)(|\overset{-}{p^{v_{i}}|)}\oplus%
{\displaystyle\bigoplus\limits_{w_{j}\in Y}}
M_{\Upsilon_{j}}(K[x^{t_{j}},x^{-t_{j}}])(|\overset{-}{q^{w_{j}}}|)\text{.}%
\]

\end{proof}

\textbf{Remark}: In (Theorem 3.9, \cite{AAPS}), it is proved that a Leavitt
path algebra $L$ is categorically noetherian (that is, submodules of finitely
generated left/right $L$-modules are finitely generated) exactly when $L$
$\cong%
{\displaystyle\bigoplus\limits_{i\in I}}
M_{\Lambda_{i}}(K)\oplus%
{\displaystyle\bigoplus\limits_{j\in J}}
M_{\Lambda_{j}}(K[x,x])$ where $\Lambda_{i},\Lambda_{j}$ are suitable index
sets and the $t_{j}$ are positive integers. In view of Theorem
\ref{Graded Socle}, we conclude that a Leavitt path algebra $L$ over an
arbitrary graph is categorically noetherian if and only if $L$ coincides with
its graded socle.

\section{Graded E-Algebraic Branching Systems and Graded Simple Modules}

Given an arbitrary graph $E$, the concept of an\textit{ E-algebraic branching
system }was studied by Gon\c{c}alves and Royer in \cite{GR} and was used to
provide representations for a Leavitt path algebra $L$ of the graph $E$. \ In
this section, we define a grading on these algebraic branching systems \ and
use the graded systems to construct various graded-simple modules over $L$. We
also describe the corresponding annihilating ideals of these graded-simple
$L$-modules. It may be of some interest to note that, unlike the ungraded
case, the annihilator ideal of a graded-simple $L$-module need not be
primitive, but it is always a graded prime ideal of $L$. \ Conversely, when
$E$ is row-finite or $E^{0}$ is countable, we show that every graded prime
ideal of $L$ which is not primitive occurs as the annihilator of some
graded-simple $L$-module which is not simple. Using infinite paths and sinks
in $E$, Chen \cite{C} constructed simple $L$-modules and showed that these
simple modules can also be constructed by using algebraic branching systems.
This approach was further expanded in \cite{ARa} and \cite{R-2} to introduce
new types of simple $L$-modules induced by vertices which are infinite
emitters in $E$. All these new modules are now called Chen simple $L$-modules.
Examples show that not all graded-simple $L$-modules are Chen simple modules
and likewise, not all Chen simple $L$-modules are graded-simple. We show that
the Chen simple module $V_{[p]}$ corresponding to an infinite path $p$ is
graded-simple if and only if $p$ is an infinite irrational path in $E$.

\begin{definition}
Let $E$ be an arbitrary graph. An $E$\textit{-algebraic branching system
}consists of a set $X$ and a family of its subsets $\{X_{v},X_{e}:v\in
E^{0},e\in E^{1}\}$ such that

(1) $X_{v}\cap X_{w}=\emptyset=X_{e}\cap X_{f}$ for $v,w\in E^{0}$ with $v\neq
w$ and $e,f\in E^{1}$ with $e\neq f$;

(2) $X_{e}\subseteq X_{s(e)}$ for $e\in E^{1}$;

(3) For all $v\in E^{0}$, $X_{v}=%
{\displaystyle\bigcup\limits_{e\in s^{-1}(v)}}
X_{e}$;

(4) For each $e\in E^{1}$, there exists a bijection $\sigma_{e}:X_{r(e)}%
\rightarrow X_{e}$.
\end{definition}

For our purposes, we also assume that $X$ is saturated, that is, $X=%
{\displaystyle\bigcup\limits_{v\in E^{0}}}
X_{v}$.

\begin{definition}
The $E$-branching system is called \textit{graded} if there is a map
$\deg:X\rightarrow%
\mathbb{Z}
$ such that

(5) $\deg(\sigma_{e}(x))=\deg(x)+1$.
\end{definition}

Given an $E$-algebraic branching system $X$, we define a left module over
$L:=L_{K}(E)$ as follows: Consider the $K$-vector space $M(X)$ having $X$ as a
basis. Following \cite{GR}, we define, for each vertex $v$ and each edge $e$
in $E$, linear transformations $P_{v},S_{e}$ and $S_{e^{\ast}}$ on $M(X)$ as follows:

\begin{definition}
For all paths $x\in X$,

(I) $\ \ P_{v}(x)=\left\{
\begin{array}
[c]{c}%
x\text{, if }x\in X_{v}\\
0\text{, otherwise}%
\end{array}
\right.  $

(II) $\ S_{e}(x)=\left\{
\begin{array}
[c]{c}%
\sigma_{e}(x)\text{, if }x\in X_{r(e)}\\
0\text{, \qquad otherwise}%
\end{array}
\right.  $

(III) $S_{e^{\ast}}(x)=\left\{
\begin{array}
[c]{c}%
\sigma_{e}^{-1}(x)\text{, if }x\in X_{e}\\
0\text{, \qquad otherwise}%
\end{array}
\right.  $
\end{definition}

Then it is straightforward to check that the endomorphisms $\{P_{u}%
,S_{e},S_{e^{\ast}}:u\in E^{0},e\in E^{1}\}$ satisfy the defining relations
(1) - (4) of the Leavitt path algebra $L$ (see \cite{GR}). This induces an
algebra homomorphism $\phi$ from $L$ to $End_{K}(M(X))$ mapping $u$ to $P_{u}%
$, $e$ to $S_{e}$ and $e^{\ast}$ to $S_{e^{\ast}}$ (\cite{GR}, Theorem 2.2).
Then $M(X)$ can be made a left module over $L$ via the homomorphism $\phi$. We
denote this $L$-module operation on $M(X)$ by $\cdot$.

If $X$ is a graded branching system, then define, for each $i\in%
\mathbb{Z}
$, the homogeneous component
\[
M(X)_{i}=\{%
{\displaystyle\sum\limits_{x\in X}}
k_{x}x\in M(X):\deg(x)=i\}\text{.}%
\]
It is easy to see that
\[
M(X)=%
{\displaystyle\bigoplus\limits_{i\in\mathbb{Z}}}
M(X)_{i}%
\]
and that $M(X)$ is a $%
\mathbb{Z}
$-graded left $L$-module.

We shall now provide three illustrations of constructing graded-simple
$L$-modules using appropriate $E$-algebraic branching systems.

\bigskip

A) \textbf{A graded-simple module which is not simple:} Let $E$ be a graph.
Let $u\in E^{0}$ be a Laurent vertex so that $T_{E}(u)$ consists of a single
path $\gamma=\mu c$ where $\gamma$ is a path without bifurcations and $c$ is a
cycle without exits based at a vertex $v$. Let $X=\{pq^{\ast}:p,q$ paths with
$r(q^{\ast})=v\}$. We make $X$ a graded branching system as follows:

$X_{w}=\{pq^{\ast}\in X:s(p)=w\}$, where $w\in E^{0};$

$X_{e}=\{pq^{\ast}\in X:e$ initial edge of $p\}$, where $e\in E^{1}$;

$\sigma_{e}:X_{r(e)}\rightarrow X_{e}$ given by $pq^{\ast}\longmapsto
epq^{\ast}$, a bijection;

$\deg:X\rightarrow%
\mathbb{Z}
$, given by $pq^{\ast}\longmapsto|pq^{\ast}|=|p|-|q|$.

The only non-trivial condition of the algebraic branching that needs to be
\ checked here is $X_{w}=%
{\displaystyle\bigcup\limits_{e\in s^{-1}(w)}}
X_{e}$. Let $pq^{\ast}\in X_{w}$. If $|p|\geq1$ then clearly $p=et$, where
$s(e)=w$. Thus $pq^{\ast}=etq^{\ast}\in X_{e}$. If $|p|=0$, then $pq^{\ast
}=q^{\ast}$ such that $s(q^{\ast})=r(q)=w$ and, by hypothesis on $X$,
$r(q^{\ast})=s(q)=v$. Since $\gamma=\mu c$ has no exits, the path $q$ is part
of $\gamma$ and $w$ will emit only one edge $e$. Then $q^{\ast}=ee^{\ast
}q^{\ast}\in X_{e}$ thus showing\ $X_{w}\subseteq%
{\displaystyle\bigcup\limits_{e\in s^{-1}(w)}}
X_{e}$. The converse inclusion is obvious.

Then conditions (I),(II),(III) make $M(X)$ a graded $L$-module which we denote
by $N_{vc}$.

Next we wish to list some of the properties of module $N_{vc}$ in Theorem
\ref{Simple module N-vc}. Its proof needs the following descriptions of the
graded prime and graded primitive ideals of a Leavitt path algebra $L$ of an
arbitrary graph $E$ stated in Theorems 3.12 and 4.3 of \cite{R-1}. Recall
(\cite{ABR}) , subset $X$ of $E^{0}$ is said satisfy the \textit{countable
separation property} if there is a countable subset $Y$ of $E^{0}$ with the
property that to each $u\in X$, there is a $v\in Y$ such that $u\geq v$.

\begin{theorem}
\label{Rangas thm on primitive ideals}(\cite{R-1}): Let $P$ be an ideal of $L$
with $P\cap E^{0}=H$. Then

(a) $P$ is a graded prime ideal if and only if either (i) $P=I(H,B_{H})$
(where $B_{H}$ may be empty) with $E^{0}\backslash H$ downward directed or
(ii) $P=I(H,B_{H}\backslash\{u\})$ for some $u\in B_{H}$ and $w\geq u$ for
every $w\in E^{0}\backslash H$.

(b) $P$ is a graded primitive ideal if and only if either (i) $P$ is a graded
prime ideal of the form $I(H,B_{H})$ such that $E\backslash H$ satisfies
Condition (L) and the countable separation property or (ii) $P$ is the prime
ideal of type (ii) described above.
\end{theorem}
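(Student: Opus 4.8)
The plan is to reduce everything to the quotient. Since $P$ is graded, Tomforde's classification of graded ideals (\cite{T}) gives $P=I(H,S)$ for the admissible pair $(H,S)$ with $H=P\cap E^{0}$ and $S\subseteq B_{H}$, together with $L/P\cong_{gr}L_{K}(F)$ where $F:=E\setminus(H,S)$. As graded primeness (respectively graded primitivity) of $P$ is equivalent to the quotient ring $L/P$ being graded prime (respectively graded primitive), the theorem splits into two tasks: first characterize graded primeness/primitivity of $L_{K}(F)$ purely in terms of the graph $F$, and then translate those conditions on $F=E\setminus(H,S)$ back into conditions on the triple $(E,H,S)$. The workhorse throughout is a dictionary between paths in $F$ and paths in $E$ avoiding $H$, together with the behaviour of the extra vertices $v'$ and extra edges $e'$ arising from $v\in B_{H}\setminus S$.

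For part (a) I would invoke the graph criterion that $L_{K}(F)$ is graded prime if and only if $F^{0}$ is downward directed (the graded counterpart of the classical primeness criterion). Here $F^{0}=(E^{0}\setminus H)\cup\{v':v\in B_{H}\setminus S\}$, and the key structural observation is that each added vertex $v'$ is a sink of $F$, since no edge of $F$ has source $v'$. Because two distinct sinks admit no common descendant, downward directedness immediately forces $|B_{H}\setminus S|\le 1$. If $B_{H}\setminus S=\varnothing$, i.e. $S=B_{H}$, then downward directedness of $F^{0}$ coincides with downward directedness of $E^{0}\setminus H$ (one checks a connecting path in $F$ can be realized inside $E$ while avoiding $H$), which is case (i). If $B_{H}\setminus S=\{u\}$, then the sink $u'$ must be a common descendant of every pair, so $w\ge u'$ for all $w\in F^{0}$; unravelling $w\ge u'$ in $F$ — the last edge of any such path is some $e'$ with $r(e)=u$ — shows this is equivalent to $w\ge u$ in $E$ for every $w\in E^{0}\setminus H$, which is case (ii). Conversely both configurations make $F^{0}$ downward directed, and the only real care needed is the path-translation argument and the equivalence of reachability of $u'$ in $F$ with reachability of $u$ in $E\setminus H$.

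For part (b) I would reduce, as above, to the existence of a faithful graded-simple $L_{K}(F)$-module. Type (ii) is immediate: the added sink $u'$ is a line point of $F$, so by Proposition \ref{Lv simple implies line point or Laurent} the module $L_{K}(F)u'$ is a minimal graded left ideal, hence graded-simple; since $w\ge u'$ for every $w\in F^{0}$, every vertex acts nonzero on it, so its (graded) annihilator contains no vertex and therefore vanishes, using that every nonzero graded ideal of a Leavitt path algebra contains a vertex. Thus $L_{K}(F)$ is graded primitive, so every type-(ii) graded prime ideal is automatically graded primitive. For type (i), where $S=B_{H}$ and $F^{0}=E^{0}\setminus H$ is already downward directed, I would prove that graded primitivity is equivalent to $F$ satisfying Condition (L) together with the countable separation property. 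For the forward construction I would realize a faithful graded-simple module as a Chen module $V_{[p]}$ along an infinite irrational path $p$ in $F$: by the paper's description that $V_{[p]}$ is graded-simple exactly when $p$ is irrational, Condition (L) is what permits the choice of an irrational $p$, while the countable separation property permits choosing $p$ so that its tail is reached from every vertex, forcing the annihilator to contain no vertex and hence to vanish. For the converse one recovers Condition (L) and the countable separation property from a faithful graded-simple module, following the ungraded argument of Abrams--Bell--Rangaswamy (\cite{ABR}) adapted to the graded setting.

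The step I expect to be the genuine obstacle is type (i) of part (b), namely reproving the primitivity criterion of \cite{ABR} in the graded category. The two delicate points are producing a single irrational infinite path whose tail absorbs all vertices — precisely where the countable separation property is indispensable and where the uncountable case must be handled with care — and then showing that the resulting $V_{[p]}$ is simultaneously faithful and graded-simple, which couples the role of Condition (L) (irrationality, hence graded-simplicity) with that of the countable separation property (faithfulness). By comparison, part (a) and type (ii) of part (b) amount to bookkeeping on the quotient graph once the path-translation dictionary is in place.
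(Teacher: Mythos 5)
First, a point of order: the paper never proves this theorem; it is imported verbatim from Theorems 3.12 and 4.3 of \cite{R-1} and used as a black box (notably inside the proof of Theorem \ref{Simple module N-vc}). So your proposal can only be measured against the intended meaning of the statement and the argument in the cited source. With that understood, your part (a) is a sound sketch of the standard route: pass to $L/P\cong_{gr}L_{K}(E\backslash(H,S))$ via Tomforde's classification, use the fact that a $\mathbb{Z}$-graded ideal is graded prime iff it is prime together with the criterion that a Leavitt path algebra is prime iff its vertex set is downward directed, and observe that each vertex $v'$ with $v\in B_{H}\backslash S$ is a sink, which forces $|B_{H}\backslash S|\leq1$ and yields the dichotomy (i)/(ii).

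Part (b), however, rests on a misreading that makes your proposed equivalence false. In this theorem ``graded primitive ideal'' means a graded ideal that is primitive in the ordinary, ungraded sense (i.e.\ $L/P$ admits a faithful \emph{simple} module); it does not mean ``annihilator of a faithful \emph{graded-simple} module,'' which is what you reduce to. These two notions genuinely differ, and the difference is exactly the theme of the paper's Theorem \ref{Simple module N-vc}: annihilators of graded-simple modules are graded prime but need not be primitive. Concretely, take $E$ to be a single loop, so $L=K[x,x^{-1}]$, $H=\emptyset$, $P=0$. Then $P$ is graded prime (one vertex, trivially downward directed), and $K[x,x^{-1}]$ is a faithful graded-simple module over itself (the paper notes this in Section 2), so under your reading $P$ would be ``graded primitive''; but Condition (L) fails (the loop has no exit), so the theorem asserts $P$ is \emph{not} graded primitive --- and indeed $K[x,x^{-1}]$, being commutative and not a field, is not a primitive ring. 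Consequently your converse step for type (i) (``recover Condition (L) from a faithful graded-simple module by adapting the ungraded argument of \cite{ABR}'') cannot succeed: the implication you would need is false. The repair is to work with honest simple modules throughout part (b): in type (ii) the line point $u'$ gives a minimal left ideal $L_{K}(F)u'$ that is actually simple (not merely graded-simple), hence faithful-simple; in type (i) the forward direction can still use a Chen module $V_{[p]}$ (these are always simple, irrational or not), but the converse must be the ungraded Abrams--Bell--Rangaswamy argument that primitivity of a prime Leavitt path algebra forces Condition (L) and the countable separation property.
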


The next theorem lists some properties of the module $N_{vc}$.

\begin{theorem}
\label{Simple module N-vc}Let $v$ be a Laurent vertex based at a cycle $c$.
Then we have the following:

(1) The module $N_{vc}$ is a graded-simple $L$-module but is not a simple $L$-module;

(2).The annihilator ideal of $N_{vc}$ is the ideal $I(H(v),\emptyset)$ where
$H(v)=\{u\in E^{0}:u\ngeqq v\}$. It is a graded prime ideal of $L$ which is
not primitive;

(3) Conversely, if $E$ is row-finite or if $E^{0}$ is countable, then every
graded prime ideal $P$ of $L$ which is not primitive is the annihilator of a
graded-simple module which is not simple.
\end{theorem}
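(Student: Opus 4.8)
The plan is to establish the three parts of Theorem \ref{Simple module N-vc} in sequence, using the branching-system construction of $N_{vc}$ together with the characterization of graded prime/primitive ideals from Theorem \ref{Rangas thm on primitive ideals}. For part (1), I would first show graded-simplicity of $N_{vc}$ directly from the module structure. Recall the basis $X=\{pq^{\ast}:r(q^{\ast})=v\}$ with $\deg(pq^{\ast})=|p|-|q|$; a nonzero homogeneous element of a graded submodule is a $K$-linear combination of basis elements of a single degree. The key is that the $L$-action lets me move between basis elements: acting by the ghost path $p^{\ast}$ (via the $S_{e^{\ast}}$ operators) collapses $pq^{\ast}$ down to $q^{\ast}$, and then acting by edges and by the cycle $c$ (which has no exit, so $cc^{\ast}=v$) lets me rebuild any other basis element of the same degree. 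I would verify that from any single nonzero homogeneous element one can reach a fixed generator, hence the submodule is all of $N_{vc}$. To see $N_{vc}$ is \emph{not} simple, I would exhibit a non-graded $L$-submodule: since $v$ is a Laurent vertex, $vLv\cong_{gr}K[x^{n},x^{-n}]$ by Lemma \ref{corner-line pint-Laurent}, and the subspace spanned by elements of the form $(f(c,c^{\ast}))q^{\ast}$ for $f$ in a proper (non-homogeneous) ideal of $K[x,x^{-1}]$ gives a proper submodule, mirroring the $K[x,x^{-1}]$-example discussed in Section 2.

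For part (2), I would compute $\operatorname{Ann}(N_{vc})$ explicitly. A vertex $u$ acts as the projection $P_{u}$ onto the span of $\{pq^{\ast}\in X:s(p)=u\}$; this span is nonzero precisely when some path $p$ starts at $u$ and reaches $v$ (so that $pq^{\ast}\in X$ is possible), i.e. precisely when $u\geq v$. Thus $u$ annihilates $N_{vc}$ iff $u\ngeqq v$, giving $\operatorname{Ann}(N_{vc})\cap E^{0}=H(v)=\{u:u\ngeqq v\}$. I would check $H(v)$ is hereditary and saturated and that the annihilator, being a graded ideal (as $N_{vc}$ is a graded module and the annihilator of a graded module is graded), equals $I(H(v),\varnothing)$ — here I must confirm $B_{H(v)}$ contributes nothing to the annihilator, or absorb any breaking-vertex elements appropriately. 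That $E^{0}\backslash H(v)=T_{E}(v)^{-1}$-type set (the vertices $u$ with $u\geq v$) is downward directed follows because any two such vertices both connect into the single no-exit path $\gamma=\mu c$, so they share a common descendant; hence by Theorem \ref{Rangas thm on primitive ideals}(a)(i), $I(H(v),\varnothing)$ is graded prime. It fails to be primitive because $c$ is a cycle \emph{without} exits sitting in $E\backslash H(v)$, so Condition (L) fails there, excluding form (b)(i); and it is not of type (ii) since $B_{H(v)}$ plays no role here.

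For part (3), the converse, I would start from an arbitrary graded prime non-primitive ideal $P$ and use Theorem \ref{Rangas thm on primitive ideals} to pin down its shape: being graded prime but not primitive, and in the row-finite or countable-$E^{0}$ setting, $P$ must be of form $I(H,B_{H})$ with $E\backslash H$ downward directed but \emph{failing} Condition (L) (the countable separation property is automatic under the stated hypotheses, so the only way to be prime-but-not-primitive among type-(i) ideals is failure of Condition (L)). Failure of Condition (L) in the downward-directed quotient graph produces a cycle $c$ with no exit in $E\backslash H$; I would then locate a Laurent vertex $v$ on or feeding this cycle, so that $H=H(v)$, and conclude $P=\operatorname{Ann}(N_{vc})$ by part (2). \textbf{The main obstacle} I anticipate is precisely this last matching step: extracting a genuine Laurent vertex $v$ from the abstract data ``$E\backslash H$ downward directed with a no-exit cycle'' and verifying that the hereditary saturated set it determines is exactly $H$, rather than a proper subset. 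The downward-directedness should force the no-exit cycle to be essentially unique and to be the common ``sink-like'' target, which is what makes its base (after pulling back along the bifurcation-free tail) a Laurent vertex; making this rigorous, and handling the row-finite versus countable cases uniformly, is where the real work lies.
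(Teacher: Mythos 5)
Your part (1) is essentially fine: the graded-simplicity argument is the paper's own (collapse a homogeneous element by ghost paths, then rebuild along $c$), and your non-simplicity device works once ``the subspace spanned by elements $(f(c,c^{\ast}))q^{\ast}$'' is replaced by the $L$-submodule these elements generate. Concretely, under the natural identification $N_{vc}\cong_{gr}Lv$, your submodule for the ideal generated by $1+x^{|c|}$ is $Lv\cdot(v+c)$, which is proper: a relation $v=a(v+c)$ gives, after left multiplication by the idempotent $v$, an equation $1=\bar{a}(1+x^{|c|})$ in $vLv\cong_{gr}K[x^{|c|},x^{-|c|}]$, impossible. This is in fact more robust than the paper's choice $L(v+f)$ with $f$ an edge satisfying $r(f)=v$: if $s(f)\neq v$ (which happens whenever $|c|\geq2$), then $v\cdot(v+f)=v$, so $L(v+f)=N_{vc}$ and the paper's argument collapses; your route repairs it. The genuine gap in your part (2) is that the verification you flag fails: breaking vertices of $H(v)$ \emph{do} contribute. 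For $u\in B_{H(v)}$, the element $u^{H(v)}=u-\sum_{s(e)=u,\,r(e)\notin H(v)}ee^{\ast}$ annihilates $N_{vc}$: a basis element $pq^{\ast}$ with $s(p)=u$ has first edge $e_{0}$ with $r(e_{0})\geq v$, hence $r(e_{0})\notin H(v)$, so $u^{H(v)}\cdot pq^{\ast}=pq^{\ast}-e_{0}e_{0}^{\ast}pq^{\ast}=0$, and $u^{H(v)}$ kills all other basis elements term by term. Thus $\mathrm{Ann}(N_{vc})=I(H(v),B_{H(v)})$, which differs from the stated $I(H(v),\emptyset)$ whenever $B_{H(v)}\neq\emptyset$; note also that Theorem \ref{Rangas thm on primitive ideals}(a)(i), which you invoke, applies to $I(H,B_{H})$ and not to $I(H,\emptyset)$. (You are in good company: the paper's own proof makes the same error, computing with $u-\sum_{r(e)\in H(v)}ee^{\ast}$, which is not $u^{H(v)}$ and is generally not even a finite sum. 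The corrected statement $\mathrm{Ann}(N_{vc})=I(H(v),B_{H(v)})$ is still graded prime by (a)(i) and still non-primitive by failure of Condition (L).)

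Part (3) is where your approach is genuinely wrong, and the obstacle you anticipate is not merely ``the real work'' but an impossibility in the form you pose it. You seek a Laurent vertex $v$ of $E$ with $H=H(v)$, so as to invoke part (2); but the no-exit cycle produced by the failure of Condition (L) lives in the quotient graph $E\backslash(H,B_{H})$, and in $E$ itself that cycle may have exits into $H$, so its base need not be a Laurent vertex of $E$ --- indeed $E$ may have no Laurent vertices at all. For example, let $E$ consist of a loop $c$ at $v$ together with one exit edge from $v$ to a sink $s$, and take $H=\{s\}$: then $P=I(H,\emptyset)$ is graded prime and not primitive, yet no vertex of $E$ is a Laurent vertex, so no module $N_{vc}$ over $L_{K}(E)$ is available and no set of the form $H(v)$ equals $H$. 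Downward-directedness cannot repair this. The missing idea --- and the paper's actual argument --- is to work over the quotient algebra: $L/P\cong L_{K}(E\backslash(H,B_{H}))$, in which $v$ genuinely \emph{is} a Laurent vertex; then $S=(L/P)\bar{v}$ is graded-simple but not simple by Proposition \ref{Lv simple implies line point or Laurent}, and since downward-directedness combined with the no-exit property forces $w\geq v$ for every vertex $w$ of the quotient graph, the annihilator of $S$ over $L/P$ is a graded ideal containing no vertices and hence is zero; therefore the annihilator of $S$ as an $L$-module is exactly $P$. No identification of $H$ with an $H(v)$ computed inside $E$ is needed, and the row-finite and countable cases are handled uniformly because those hypotheses are used only to guarantee the countable separation property in Theorem \ref{Rangas thm on primitive ideals}(b).
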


\begin{proof}
(1) Suppose $U$ is a non-zero graded submodule of $N_{vc}$. Let $a$ be a
non-zero homogeneous element in $U$, say, $a=%
{\displaystyle\sum\limits_{i=1}^{n}}
k_{i}p_{i}q_{i}^{\ast}$, where, by definition, $r(q_{i}^{\ast})=v$.
Multiplying by $p_{1}^{\ast}$ on the left, and replacing $a$ by $p_{1}^{\ast
}a$, we may assume that the first term of $a$ is $k_{1}q_{1}^{\ast}$. Again
multiplying on the left by $s(q_{1}^{\ast})$ and gathering all the non-zero
terms, we may assume without loss of generality that
\[
a=k_{1}q_{1}^{\ast}+k_{2}p_{2}q_{2}^{\ast}+\cdot\cdot\cdot+k_{r}p_{r}%
q_{r}^{\ast}%
\]
where $|q_{1}|>0,s(p_{i})=s(q_{1}^{\ast})=r(q_{1})$ and $s(q_{1}%
)=r(q_{1}^{\ast})=r(q_{i}^{\ast})=s(q_{i})=v$ for all $i$. Since the path $c$
has no exits, $q_{1}$ lies entirely on $c$ and this implies that the $p_{i}$
lie entirely on $c$. Further the $q_{i}$ also lie on $c$ with $s(q_{i})=v$ and
$r(q_{i})=r(p_{i})$. Since $\deg(p_{i}q_{i}^{\ast})=\deg(q_{1}^{\ast})$ for
all $i$, we conclude that each $p_{i}q_{i}^{\ast}=q_{1}^{\ast}$. So
$a=kq_{1}^{\ast}$ for some $k\in K$. Then $k^{-1}q_{1}a=v\in U$ and so
$U=N_{v\infty}$. Thus $N_{v\infty}$ is a graded-simple $L$-module. But
$N_{vc}$ is not a simple $L$-module. Because, if $f$ is an edge with $r(f)=v$,
then it can be readily seen that $v\notin L(v+f)$, thus showing that $(v+f)$
generates a non-zero proper submodule of $N_{vc}$.

(2) First observe that the annihilator $Q$ of a graded simple module $S$ is
always a graded prime ideal: $Q$ is graded, because if $a\in Q$ with
$a=a_{1}+\cdot\cdot\cdot+a_{n}$ a graded sum of homogeneous elements $a_{i}$,
then for any homogeneous element $x\in S$, $0=ax=a_{1}x+\cdot\cdot\cdot
+a_{n}x$ implies $a_{i}x=0$ for all $i$. Hence each $a_{i}\in Q$. To see that
$Q$ is prime, suppose $A,B$ are graded ideals such that $A\nsubseteq Q$ and
$B\nsubseteq Q$. Then $AS$ is a non-zero graded submodule of $S$ and so
$AS=S$, by simplicity. Similarly $BS=S$. Then $ABS=AS=S$. Hence $AB\nsubseteq
Q$, thus proving that $Q$ is graded prime.. Thus the annihilator $P$ of
$N_{vc}$ must be a graded prime ideal of $L$. From the description of $N_{vc}$
it is clear that $H(v)=\{u\in E^{0}:u\ngeqq v\}\subset P$. Actually, $P\cap
E^{0}=H(v)$, since $w\geq v$ for all $w\in E^{0}\backslash H(v)$. Also, since
$B_{H}\subseteq E^{0}\backslash H$, we have for each $u\in B_{H}$ a path $p$
connecting $u$ to $v$. Clearly then
\[
u^{H(v)}p=(u-%
{\displaystyle\sum\limits_{e\in s^{-1}(u),r(e)\in H(v)}}
ee^{\ast})p=p\neq0.
\]
So $u^{H(v)}\notin P$. It then follows from Theorem
\ref{Rangas thm on primitive ideals}(a) that $P=I(H(v),\emptyset)$. As
$E\backslash H(v)$ contains the cycle $c$ without exits, it does not satisfy
Condition (L) and hence by Theorem \ref{Rangas thm on primitive ideals}(b)
that $P$ is not a primitive ideal of $L$.

(3) \ Suppose $P$ is a graded prime ideal of $L$ which is not primitive. Let
$H=P\cap E^{0}$. \ By Theorem \ref{Rangas thm on primitive ideals},
$E^{0}\backslash H$ is downward directed and $P=I(H,B_{H})$. Also,
$E\backslash H$ satisfies the countable separation property whenever
$E\backslash H$ is row-finite or $E^{0}\backslash H$ is countably infinite
\cite{ABR}. Then the hypothesis that $P$ is not primitive implies, by Theorem
\ref{Rangas thm on primitive ideals}, that $E\backslash H$ does not satisfy
Condition (L). Hence there is a cycle $c$ based at a vertex $v$ and without
exits in $E\backslash(H,B_{H})$. Thus $v$ is a Laurent vertex in $E\backslash
H$. The denoting $\bar{v}=v+P$, we appeal to Proposition
\ref{Lv simple implies line point or Laurent} to conclude that \ $S=(L/P)\bar
{v}\cong L_{K}(E\backslash(H,B_{H}))v$ is a graded-simple $L/P$-module which
obviously not a simple $L/P$-module (as $v$ is not a line point in
$E\backslash(H,B_{H})$). Since $w\geq v$ for every vertex $w\in E\backslash
(H,B_{H})$, \ it is then clear that the annihilator of $S$ as an $L/P$-module
is $\{0\}$. Consequently, as an $L$-module $S$ is a graded-simple module which
is not simple and its annihilator ideal is $P$.
\end{proof}

\bigskip

B) \textbf{A graded-simple module which is also simple: }Let $E$ be any graph
\ and let $v\in E^{0}$ be an infinite emitter. Let $X$ be the set of all the
finite paths ending at $v$. We make $X$ a graded branching system as follows:

$X_{w}=\{q\in X:s(q)=w\}$, where $w\in E^{0}$;

$X_{e}=\{q\in X:e$ is the initial edge of $q\}$, where $e\in E^{1}$;

A bijection $\sigma_{e}:X_{r(e)}\rightarrow X_{e}$ given by $\sigma_{e}(q)=eq$;

$\deg:X\rightarrow%
\mathbb{Z}
$ is given by $\deg(p)=|p|$.

Then the $K$-vector space $M(X)$\ having $X$ as a basis becomes a left
$L$-module by using the condition (I),(II),(III). It becomes a graded module
by defining, for each $i\in%
\mathbb{Z}
$,%
\[
M(X)_{i}=\{%
{\displaystyle\sum\limits_{p_{i}\in X}}
k_{i}p_{i}\in M(X):|p_{i}|=i\}
\]
and $M(X)$ $=%
{\displaystyle\bigoplus\limits_{i\in\mathbb{Z}}}
M(X)_{i}$. \ Now the module $M(X)$ is the same thing as the module
$S_{v\infty}$ introduced in \cite{R-2} , since the construction $M(X)$ is
essentially a reformulation, using the E-algebraic branching systems, of the
construction of the module $S_{v\infty}$. In (Proposition 2.3, \cite{R-2}), it
was shown\ that $S_{v\infty}$ becomes a simple $L$-module. Since $S_{v\infty
}=M(X)$ is also graded, $S_{v\infty}$ is a graded-simple left $L$-module. As
noted in \cite{R-2}, $S_{v\infty}$ gives rise to three distinct classes of
non-isomorphic simple and hence graded-simple $L$-modules depending on the
vertex $v$ satisfying one of the following three properties, where
$N(v)=\{u\in E^{)}:u\geq v\}$:

(i) $\ \ |s^{-1}(v)\cap r^{-1}(N(v))|=0;$

(ii) \ $0<\ |s^{-1}(v)\cap r^{-1}(N(v))|<\infty;$

(iii) $\ |s^{-1}(v)\cap r^{-1}(N(v))|$ is infinite.

\textbf{Remark}: (i) Suppose $w$ is a sink in $E$. Define $X$ to be the set of
all finite paths ending at $w$. Proceed as in paragraph B) above to make $X$ a
graded algebraic branching system and construct the graded $L$-module $M(X)$.
This module was denoted by $N_{w}$ in (\cite{C}) where it was shown to be
simple. It is thus a graded-simple $L$-module which is also simple.

(ii) The above approach to defining a graded simple module does not work if
$v$ is a regular vertex. This is because, in this case, the linear
transformations $P_{v},S_{e},S_{e^{\ast}}$ do not satisfy the CK-2 relation
\ $%
{\displaystyle\sum\limits_{e\in s^{-1}(v)}}
S_{e}S_{e^{\ast}}=P_{v}$. Because on the one hand $(%
{\displaystyle\sum\limits_{e\in s^{-1}(v)}}
S_{e}S_{e^{\ast}})(v)=0$ but on the other hand $P_{v}(v)=v\neq0$..

\bigskip

C) \textbf{A simple module which is not graded-simple: }Let $E$ be any graph
and let $p$ be an infinite path in $E$. Define \ an $E$-algebraic branching
system with $X=[p]$, the set of all infinite paths tail-equivalent to $p$.

For each $v\in E^{0}$, let $X_{v}=\{q\in\lbrack p]:s(q)=v\}$

For each $e\in E^{1}$, let $X_{e}=\{q\in\lbrack p]:e$ is the initial edge of
$q\}.$

Define $\sigma_{e}:X_{r(e)}\rightarrow X_{e}$ by $q\longmapsto eq$.

It was shown by Chen in (\cite{C}) that $M(X)$ becomes a simple left
$L$-module and denoted it by $V_{[p]}.$

Next we show that \ infinite rational paths give rise to non-gradable simple
$L$-modules. We begin with a simple lemma.

\begin{proposition}
\label{V_[p] graded when p irrational}Let $E$ be an arbitrary graph and let
$p$ be an infinite path in $E$. Then the corresponding Chen simple left
$L$-module $V_{[p]}$ is a graded-simple $L$-module if and only if $p$ is an
irrational infinite path in which case $p$ is a homogeneous element of
$V_{[p]}$. .
\end{proposition}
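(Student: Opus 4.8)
The plan is to prove both directions by analyzing when the natural grading map on $V_{[p]}$ can be defined consistently, and this hinges entirely on whether $p$ is rational or irrational.

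The plan is to exploit that $V_{[p]}$ is already known (Chen) to be a simple $L$-module, so it will be graded-simple precisely when it admits a compatible $\mathbb{Z}$-grading: once $V_{[p]}$ is a graded module, every graded submodule is in particular an honest submodule, hence equal to $0$ or $V_{[p]}$, so graded-simplicity is automatic. Conversely, if $V_{[p]}$ carries no compatible grading it cannot be graded-simple. Thus the whole proposition reduces to showing that $V_{[p]}$ is gradable if and only if $p$ is irrational.

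For the forward implication I would first record the combinatorial fact that an infinite path $p$ is irrational if and only if $\tau^{>a}(p)=\tau^{>b}(p)$ forces $a=b$ (a coincidence of tails with $a<b$ exhibits $\tau^{>a}(p)$ as a periodic path $ccc\cdots$, making $p$ rational). Assuming $p$ irrational, I would define $\deg\colon [p]\to\mathbb{Z}$ by $\deg(q)=m-n$ whenever $\tau^{>m}(q)=\tau^{>n}(p)$, such $m,n$ existing by tail-equivalence. The crux is well-definedness: if $\tau^{>m}(q)=\tau^{>n}(p)$ and $\tau^{>m'}(q)=\tau^{>n'}(p)$ with, say, $m'\ge m$, then applying $\tau^{>(m'-m)}$ to the first identity gives $\tau^{>n'}(p)=\tau^{>(n+m'-m)}(p)$, and irrationality forces $n'=n+m'-m$, whence $m'-n'=m-n$. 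With this degree one checks $\deg(\sigma_e(q))=\deg(eq)=\deg(q)+1$, so the branching system of part C becomes graded and $M([p])=V_{[p]}$ becomes a graded $L$-module; being simple, it is then graded-simple. Taking $m=n=0$ gives $\deg(p)=0$, so the basis element $p$ is homogeneous, as claimed.

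For the converse I would argue the contrapositive: if $p$ is rational then $V_{[p]}$ admits no compatible grading at all. Since $[p]=[q]$ for $q=ccc\cdots$ with $c$ a closed path of length $\ell\ge 1$, we have $V_{[p]}=V_{[q]}$, so I may assume $p=ccc\cdots$. A direct computation in the branching system gives $c\cdot p=cp=p$ and $c^{\ast}\cdot p=\tau^{>\ell}(p)=p$. Suppose toward a contradiction that $V_{[p]}=\bigoplus_n (V_{[p]})_n$ is a grading and write $p=\sum_n p_n$ for its (finite) decomposition into homogeneous components. Since $c$ is homogeneous of degree $\ell$ and $c^{\ast}$ of degree $-\ell$, comparing homogeneous components in $c\cdot p=p$ and $c^{\ast}\cdot p=p$ yields $p_m=c\cdot p_{m-\ell}$ and $p_m=c^{\ast}\cdot p_{m+\ell}$ for every $m$.

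Now let $N$ be the largest index with $p_N\neq 0$; then $p_{N+\ell}=0$ because $N+\ell>N$, so $p_N=c^{\ast}\cdot p_{N+\ell}=0$, contradicting the choice of $N$. Hence no grading exists and $V_{[p]}$ is not graded-simple. The main obstacle is the forward direction's well-definedness argument, which is exactly where irrationality is indispensable; the converse, by contrast, becomes short once one thinks to compare the top homogeneous component with the degree-lowering action of $c^{\ast}$.
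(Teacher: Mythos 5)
Your proof is correct, and its overall skeleton matches the paper's: both directions reduce to deciding when $V_{[p]}$ admits a compatible $\mathbb{Z}$-grading (simplicity then upgrading gradedness to graded-simplicity for free), and for irrational $p$ you equip the branching system with the same degree function $\deg(q)=m-n$ that the paper uses, while spelling out the well-definedness argument that the paper dismisses as "easy to check." The genuine difference is in the rational direction. The paper proceeds in two steps: it first proves that $p$ itself must be homogeneous in any putative grading --- writing $p=\sum_i h_i$, expanding each $h_i$ in the path basis as $\sum_j k_{ij}q_{ij}$, choosing $n$ so that the truncations $q_{ij}^{\leq n}$ all differ from $p^{\leq n}$, and multiplying on the left by a suitable power of $c^{\ast}$ to annihilate every term while fixing $p$ --- and only then derives the contradiction $\deg(p)=\deg(cp)=\deg(p)+|c|$. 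You bypass the homogeneity claim entirely: from $c^{\ast}\cdot p=p$ you compare homogeneous components to get $p_m=c^{\ast}\cdot p_{m+\ell}$ for all $m$, and then kill the top nonzero component $p_N$. This buys you something real: the paper's argument tacitly needs every $q_{ij}$ in the supports of the $h_i$ to be different from $p$ (otherwise the truncation-separating $n$ does not exist), a point it glosses over, whereas your top-degree argument imposes no condition on the supports and uses only the single relation $c^{\ast}\cdot p=p$ together with finiteness of the homogeneous decomposition. The result is a shorter and more robust proof of that direction, at no extra cost.
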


\begin{proof}
Assume $p$ is a rational path, say $p=ccc\cdot\cdot\cdot$ where $c$ is a
cycle. Suppose, by way of contradiction, that $V_{[p]}$ becomes a graded
module under the defined $L_{K}(E)$-module operation. First we claim that $p$
must be a homogeneous element. Suppose, on the contrary,
\[
p=%
{\displaystyle\sum\limits_{i=1}^{n}}
h_{i}\qquad\qquad\qquad\qquad(++)
\]
where the $h_{i}$ are non-zero homogeneous elements in $V_{[p]}$ and
$h_{i}\neq p$ for all $i$. Since $h_{i}\in V_{[p]}$, write $h_{i}=%
{\displaystyle\sum\limits_{j=1}^{t_{i}}}
k_{ij}q_{ij}$ where $k_{ij}\in K$, $q_{ij}\in\lbrack p]$ and the paths
$q_{ij}$ are distinct for all $i,j$. Choose an integer $n$ such that the paths
$p^{\leq n}$ and the $q_{ij}^{\leq n}$ are distinct for all $i,j$. Multiplying
the equation $(++)$ on the left with $(c^{\ast})^{-n}$, we then get $p=0$, a
contradiction. Thus $p$ is homogeneous. Since $p=cp,$we then get%
\[
\deg(p)=\deg(cp)=\deg(p)+|c|,
\]
which is a contradiction. Hence $V_{[p]}$ cannot be graded as an $L$-module.

Conversely, suppose $p$ is an infinite irrational path in $E$. For each
$q\in\lbrack p]$, define $\deg(q)=m-n$, if $m$ is the smallest positive
integer such that $\tau^{>m}(q)=\tau^{>n}(p)$ for some $n$. It is then easy to
check that the simple left $L$-module $V_{[p]}$ becomes a graded $L$-module.
The proof that the irrational path $p$ is homogeneous in $V_{[p]}$ is the same
as was given in the preceding paragraph where, instead of multiplying the
equation $(\#)$ on the left by $(c^{\ast})^{-n}$, multiply by $(p^{\leq
n})^{\ast}$ to get a contradiction.
\end{proof}

\section{Leavitt Path Algebras over which every graded-simple module is
finitely presented}

Leavitt path algebras $L_{K}(E)$ over which every non-graded simple modules
are finitely presented seem to have interesting properties. These were
completely described in \cite{ARa} when $E$ is a finite graph and in
\cite{R-3} when $E$ is an arbitrary graph. In this section, we provide a
complete description of the Leavitt path algebra $L$ of an arbitrary graph $E$
over which every graded-simple is finitely presented.

First observe that if every simple $L$-module is finitely presented, then
every graded-simple module need not be finitely presented. \ For example, let
$L$ be the algebraic Toeplitz algebra so that $L=L_{K}(E)$, with $E$
consisting of two vertices $v,w$ and two edges $e,f$ such that
$v=s(e)=r(e)=s(f)$ and $w=r(f)$. It was shown in (Example 4.5, \cite{R-2})
that every simple $L$- module is finitely presented. This also follows from
Theorem 1.1 of \cite{ARa}. On the other hand, if $I$ denotes the ideal
generated by $w$, then $I$ is a graded ideal and $L/I\cong K[x,x^{-1}]$ is a
graded-simple $L$-module. But $L/I$ is not finitely presented since
$I=Lw\oplus%
{\displaystyle\bigoplus\limits_{n=0}^{\infty}}
Lf^{\ast}(e^{\ast})^{n}$ is infinitely generated. \ As we shall see later, the
condition that all the graded-simple modules are finitely presented \ places
considerable restriction on the nature of the Leavitt path algebra $L$.

In \cite{AMT}, the authors investigate when the Chen simple module $V_{[p]}$
corresponding to an infinite path $p$ is finitely presented. One of their key
results (see Theorem 2.20, \cite{AMT}) states, in essence, that if $p$ is an
infinite irrational path no vertex on which is either an infinite emitter or a
line point, then the corresponding Chen simple $L$-module $V_{[p]}$ is not
finitely presented. But the proof of this important result involves lengthy
computations and it depends on the additional requirement that the vertices on
this path $p$ must be finite emitters. \ Using Theorem \ref{Theorem of Hazrat}
below, we shall show how to get rid of this additional requirement for the
vertices on the path $p$ and to provide a short proof of a more general result
(Theorem \ref{V_p finitely presented}).

We first consider a graded reformulation of Proposition 2 in \cite{AR-1}
\ which was proved without any reference to the grading of the Leavitt path
algebra and which is very useful in this and the next section. A careful
inspection of the construction of the morphism in their proof shows that the
stated homomorphism $\theta$, mentioned in Theorem \ref{Gene-Rnga Theorem}
below, is indeed a graded morphism whose image is a graded submodule of $L$.
We reformulate this theorem reflecting the grading of $L$. \ 

\begin{theorem}
\label{Gene-Rnga Theorem}(\cite{AR-1}) Let $E$ be an arbitrary graph. Then the
Leavitt path algebra $L:=L_{K}(E)$ is a directed union of graded subalgebras
$B=A\oplus K\epsilon_{1}\oplus\cdot\cdot\cdot\oplus K\epsilon_{n}$ where $A$
is the image of a graded homomorphism $\theta$ from a Leavitt path algebra
$L_{K}(F_{B})$ with $F_{B}$ a finite graph (depending on $B$), the
$\epsilon_{i}$ are homogeneous orthogonal idempotents and $\oplus$ is a ring
direct sum. Moreover, if $E$ is acyclic, so is each graph $F_{B}$ and in this
case $\theta$ is a graded monomorphism.
\end{theorem}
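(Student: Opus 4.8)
The plan is to keep the construction of $B$, $A$, $F_B$ and $\theta$ exactly as given in the ungraded setting of \cite{AR-1} and merely verify that every ingredient is homogeneous, so that all the maps and decompositions upgrade to graded ones. No new construction is needed: the directedness of the union and the fact that the union equals $L$ are identical to the ungraded argument and require no modification, so the entire content of the reformulation is the bookkeeping of degrees.

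First I would recall the shape of the subalgebra $B$. One begins with a finite set of homogeneous generators of $L$, collects the finitely many vertices and edges occurring in them, and builds a finite graph $F_B$ on (copies of) those vertices, keeping a vertex regular in $F_B$ exactly when it is regular in $E$ and all its emitted edges have been gathered, and otherwise imposing no CK-2 relation on it. The homomorphism $\theta : L_K(F_B) \to L$ sends each vertex of $F_B$ to the corresponding vertex of $E$, each edge to the corresponding edge, and each ghost edge to the corresponding ghost edge, with image $A = \theta(L_K(F_B))$. The idempotents produced by the construction are each of the form $\epsilon_i = v_i - \sum_e e e^{\ast}$, namely a vertex minus a finite sum of terms $e e^{\ast}$ accounting for the CK-2 deficit at $v_i$; that the $\epsilon_i$ are orthogonal idempotents, orthogonal to $A$, and that $B = A \oplus K\epsilon_1 \oplus \cdots \oplus K\epsilon_n$ is a ring direct sum, is exactly the ungraded computation of \cite{AR-1}.

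The key verification is that $\theta$ carries homogeneous elements to homogeneous elements of the same degree. For a vertex $v$ of $F_B$ one has $\deg(v)=0$ and $\theta(v)=v$ with $\deg(v)=0$ in $L$; for an edge $e$, $\deg(e)=1$ and $\theta(e)=e$ with $\deg(e)=1$; for a ghost edge, degree $-1$ goes to degree $-1$. Since the $\mathbb{Z}$-grading of any Leavitt path algebra is the one induced by assigning these degrees to the generators, an algebra homomorphism preserving the degrees of the generators preserves the whole grading: a product $g_1\cdots g_k$ of total degree $d$ maps to $\theta(g_1)\cdots\theta(g_k)\in L_d$. Hence $\theta$ is a graded homomorphism and $A$ is a graded subalgebra of $L$. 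Likewise each $\epsilon_i=v_i-\sum_e e e^{\ast}$ is homogeneous of degree $0$, because $\deg(v_i)=0$ and $\deg(e e^{\ast})=1+(-1)=0$; thus each $K\epsilon_i$ is concentrated in degree $0$, and $B$ is a graded subalgebra of $L$.

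For the acyclic case I would note that $F_B$ arises from a finite subgraph of the acyclic graph $E$ by possibly turning frontier vertices into sinks, an operation that cannot create a cycle; hence $F_B$ is acyclic and $L_K(F_B)$ is finite-dimensional. The injectivity of $\theta$ in this case is the ungraded statement of \cite{AR-1}, and an injective graded homomorphism is a graded monomorphism, so combining injectivity with the gradedness established above yields the final assertion. The main point requiring care is purely this matching step: confirming that declaring a vertex non-regular in $F_B$ removes precisely the CK-2 relation that fails in the finite approximation, and that the resulting deficit is captured by the homogeneous degree-$0$ idempotents $\epsilon_i$. Once this is lined up against the construction in \cite{AR-1}, every grading assertion in the theorem follows immediately.
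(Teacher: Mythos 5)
Your handling of the first statement matches the paper's own treatment: both defer to the construction of \cite{AR-1} and simply observe that the generators of $L_{K}(F_{B})$ are sent to homogeneous elements of degrees $0$, $1$, $-1$, and that each $\epsilon_{i}=v_{i}-\sum_{e}ee^{\ast}$ is homogeneous of degree $0$, so the whole decomposition upgrades to the graded setting. One caveat on your description of the construction: $\theta$ cannot literally send ``each vertex of $F_{B}$ to the corresponding vertex of $E$,'' for then $\epsilon_{i}=\theta(v_{i})-\sum\theta(e)\theta(e)^{\ast}$ would lie in $A$, contradicting the directness of the sum $B=A\oplus K\epsilon_{1}\oplus\cdot\cdot\cdot\oplus K\epsilon_{n}$; in the actual construction a vertex of $F_{B}$ whose outgoing edges in $E$ have not all been collected is sent to the partial sum $\sum ee^{\ast}$ rather than to the vertex itself. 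This does not damage your degree bookkeeping, since that element is still homogeneous of degree $0$, but it matters for the point below.

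The genuine gap is in the ``moreover'' clause. You dispose of injectivity by asserting that it ``is the ungraded statement of \cite{AR-1},'' but that reference does not claim $\theta$ is injective: its Proposition 2 only exhibits $A$ as a homomorphic image of $L_{K}(F_{B})$, which is all that is needed there, because the property being transferred in \cite{AR-1} (von Neumann regularity) passes to homomorphic images. The monomorphism assertion is exactly the part of the present theorem that goes beyond \cite{AR-1}, and it is the part the paper actually proves: when $E$ is acyclic, each $F_{B}$ is acyclic, hence satisfies Condition (L); since $\theta$ is a graded homomorphism that does not vanish on the vertices of $F_{B}$ (each vertex maps either to a vertex of $E$ or to a nonzero finite sum of orthogonal idempotents $ee^{\ast}$), the graded uniqueness theorem gives $\ker\theta=0$. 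Without some appeal of this kind to a uniqueness theorem, your argument establishes that $\theta$ is graded but never that it is injective, so the final assertion of the theorem remains unproved in your write-up; the fix is one sentence, but it is a sentence your proof needs.
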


\begin{proof}
For the proof of the first statement, see \cite{AR-1}. The second statement
that $\theta$ is a monomorphism follows from the fact that, when $E$ is
acyclic, the mentioned finite graph $F_{B}$ (being acyclic) satisfies
Condition (L) and since $\theta$ does not vanish on the vertices of $F_{B}$,
$\theta$ is a monomorphism by the graded uniqueness theorem.
\end{proof}

Another useful result is the following theorem proved in \cite{H-2}. This
theorem was first proved for finite graphs $E$. Then it was extended to
row-finite graphs and then to arbitrary graphs by the so-called
desingularization process. However the desingularization of a graph does not
always preserve the grading of the algebra. We show below how to fix this
problem by using Theorem \ref{Gene-Rnga Theorem} and deriving a shorter proof
of the general case when $E$ is an arbitrary graph. Recall that a graded ring
$R$ is called \textit{graded von Neumann regular} if for each homogeneous
element $x$ there is a homogeneous element $y$ such that $xyx=x$.

\begin{theorem}
\label{Theorem of Hazrat}(\cite{H-2}) Every Leavitt path algebra $L$ of an
arbitrary graph $E$ is a graded von Neumann regular ring.
\end{theorem}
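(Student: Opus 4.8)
The plan is to reduce the arbitrary-graph case to the finite-graph case of this theorem (which is the original, established part of \cite{H-2}) via the directed-union decomposition of Theorem \ref{Gene-Rnga Theorem}, thereby avoiding the desingularization argument that fails to respect the grading. The governing observation is that graded von Neumann regularity is a local property: to verify it for $L$ it suffices to produce, for each homogeneous $x \in L$, a homogeneous $y$ lying in some graded subalgebra containing $x$ with $xyx = x$, since a homogeneous element of a graded subalgebra is homogeneous in $L$ and the relation $xyx = x$ then holds verbatim in $L$.

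First I would write $L = \bigcup_B B$ as the directed union furnished by Theorem \ref{Gene-Rnga Theorem}, where each $B = A \oplus K\epsilon_1 \oplus \cdots \oplus K\epsilon_n$ is a graded subalgebra with $A = \theta(L_K(F_B))$ for a finite graph $F_B$. Given a homogeneous $x \in L$, it lies in some $B$, and since $B$ is a graded subalgebra, $x$ is homogeneous in $B$. Thus it is enough to prove that each $B$ is graded von Neumann regular. Because $B$ is a ring direct sum of graded ideals, and any homogeneous element splits as a sum of homogeneous pieces of the same degree in the separate summands whose cross-products vanish, $B$ is graded von Neumann regular precisely when each summand is.

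Next I would dispose of the two kinds of summands. Each $\epsilon_i$ is a nonzero homogeneous idempotent, hence has degree $0$ (from $\deg \epsilon_i = \deg \epsilon_i^2 = 2\deg\epsilon_i$); so $K\epsilon_i$ is a copy of the field $K$ concentrated in degree $0$, which is trivially graded von Neumann regular. For the summand $A = \theta(L_K(F_B))$, I would invoke the finite-graph case: by the original part of \cite{H-2}, $L_K(F_B)$ is graded von Neumann regular since $F_B$ is finite. Graded von Neumann regularity then passes to graded-homomorphic images: given homogeneous $s \in A$, lift it along the graded epimorphism $\theta$ to a homogeneous $r \in L_K(F_B)$ of the same degree, choose homogeneous $r'$ with $rr'r = r$, and observe that $\theta(r')$ is homogeneous with $s\,\theta(r')\,s = s$. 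Hence $A$, and therefore each $B$, is graded von Neumann regular, which completes the argument.

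The step requiring the most care is the lifting of homogeneous elements through $\theta$: one must check that a graded surjection allows every homogeneous element of the image to be hit by a homogeneous element of the source of the same degree, so that the regularity witness remains homogeneous. This is exactly where the explicitly graded character of $\theta$ recorded in Theorem \ref{Gene-Rnga Theorem} is essential, and it is precisely the feature that the desingularization approach could not guarantee. The remaining verifications — that a ring direct sum of graded ideals reduces regularity to its summands, and that homogeneity in $B$ agrees with homogeneity in $L$ — are routine once the graded-subalgebra structure of Theorem \ref{Gene-Rnga Theorem} is in hand.
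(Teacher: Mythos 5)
Your proposal is correct and follows essentially the same route as the paper's own proof: both reduce to the finite-graph case of \cite{H-2} via the directed-union decomposition of Theorem \ref{Gene-Rnga Theorem} and then observe that graded von Neumann regularity is preserved under directed unions. The only difference is that you spell out the details the paper compresses into ``and hence $B$ is graded von Neumann regular'' — namely the homogeneous lifting along the graded epimorphism $\theta$ and the reduction to the ring direct summands — which are exactly the routine verifications the paper leaves implicit.
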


\begin{proof}
It was shown in (Theorem 10 (I), \cite{H-2}) that if $E$ is any finite graph,
then $L$ is graded von Neumann regular. Suppose now $E$ is an arbitrary graph.
\ By Theorem \ref{Gene-Rnga Theorem}, $L$ is a directed union of graded
subalgebras $B=A\oplus K\epsilon_{1}\oplus\cdot\cdot\cdot\oplus K\epsilon_{n}$
where $A$ is the image of a graded homomorphism $\theta$ from a Leavitt path
algebra $L_{K}(F_{B})$ with $F_{B}$ a finite graph (depending on $B$), the
$\epsilon_{i}$ are orthogonal idempotents and $\oplus$ is a ring direct sum.
Since $F_{B}$ is a finite graph, $L_{K}(F_{B})$ and hence $B$ is graded von
Neumann regular by \cite{H-2}. It is then clear from the definition that the
directed union $L$ is also graded von Neumann regular.
\end{proof}

We summarize some of the properties of graded modules over graded rings
without identity (such as the Leavitt path algebras) that we will be using.
Even though a ring $R$ without identity may not be projective as an
$R$-module, it was shown in \cite{ARS} that a Leavitt path algebra $L$ is
always projective as a left (right) $L$-module. Since $L$ is also graded, it
becomes a graded-projective $L$-module. \ Now $L$ is graded von Neumann
regular by Theorem \ref{Theorem of Hazrat}. We shall be using the following
graded version of a known result which can be obtained by a simple induction
on the number of generators:
\[
\text{\textit{A finitely generated graded left ideal of }}L\text{\textit{ is a
direct summand of }}L\text{\qquad(**)\textit{ }}\mathit{\ }%
\]

We also wish to note that, in spite of the fact that $L$ may not have a
multiplicative identity, maximal graded left ideals always exist in $L$.
Because, if $v\in E^{0}$, then by Zorn's Lemma, $Lv$ contains a maximal graded
$L$-submodule $N$. Then $N\oplus%
{\displaystyle\bigoplus\limits_{u\in E^{0},u\neq v}}
Lu$ is a maximal graded left ideal of $L$. Indeed, every maximal graded left
ideal $M$ of $L$ is of this form: Given $M$ there is a unique vertex $v$ such
that $M=N\oplus%
{\displaystyle\bigoplus\limits_{u\in E^{0},u\neq v}}
Lu$ where $N=Mv=M\cap Lv$ is a maximal graded $L$-submodule of $Lv$ (see Lemma
3.1, Remark 3.2, \cite{R-2}). Consequently every graded-simple $L$-module $S$
of $L$ is graded isomorphic to $Lv/N$ \ for some vertex $v$ and some maximal
graded submodule $N$ of $Lv$.

The following Lemma is well-known for non-graded modules over rings with
identity. We will modify the proof since $L$ need not have an identity and
since we are dealing with graded modules.

\begin{lemma}
\label{All simple projective}If every graded-simple left $L$-module is
projective, then $L$ is graded semi-simple, that is, $L$ is a direct sum of
minimal graded left ideals.
\end{lemma}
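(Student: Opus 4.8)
The plan is to adapt the classical argument ``every simple module projective $\Rightarrow$ semisimple'' to the graded, non-unital setting, organised around the graded socle. Write $T=Soc^{gr}(L)$ for the sum of all minimal graded left ideals of $L$; this is a graded left ideal. Since $L=\bigoplus_{v\in E^{0}}Lv$ and $T$ is a left ideal, we will have $T=L$ as soon as every vertex $v$ lies in $T$, because then $Lv\subseteq T$ for all $v$. Moreover, a graded module that equals a sum of minimal graded submodules is automatically a graded direct sum of a subfamily of them, by the usual Zorn's lemma argument carried out in the category of graded $L$-modules. Hence it suffices to prove $T=L$, and the direct-sum form of graded semi-simplicity then follows.

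So I would argue by contradiction, assuming $T\neq L$. Then some vertex $v\notin T$, and consequently $Lv\cap T$ is a \emph{proper} graded submodule of $Lv$: if instead $Lv\cap T=Lv$, then $v=v\cdot v\in Lv\subseteq T$, contrary to $v\notin T$. Note $Lv\cap T$ is indeed a graded $L$-submodule, being the intersection of the two graded left ideals $Lv$ and $T$. Using the existence of maximal graded submodules established just above the lemma (Zorn's lemma applies because no proper graded submodule of $Lv$ can contain the generator $v$, so unions of chains of proper graded submodules stay proper), I choose a maximal graded submodule $M$ of $Lv$ with $Lv\cap T\subseteq M\subsetneq Lv$.

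Now $Lv/M$ is a graded-simple $L$-module, hence graded-projective by hypothesis. Therefore the canonical graded epimorphism $Lv\rightarrow Lv/M$ splits (the identity of $Lv/M$ lifts along this surjection to a graded section), giving a graded decomposition $Lv=M\oplus C$ with $C\cong_{gr}Lv/M$. Thus $C$ is a graded-simple graded left ideal, i.e.\ a minimal graded left ideal, so $C\subseteq T$ by the definition of $Soc^{gr}(L)$. But $C\subseteq Lv$ as well, whence $C\subseteq Lv\cap T\subseteq M$; since $C\cap M=0$ this forces $C=0$, contradicting $C\neq0$. Hence $T=L$, and combining with the fact that a sum of minimal graded submodules is a direct sum of some of them, $L$ is a direct sum of minimal graded left ideals, i.e.\ graded semi-simple.

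The point requiring the most care is the splitting in the graded, non-unital context: I must read ``projective'' as graded-projective and confirm that graded-projectivity of $Lv/M$ lifts its identity along the graded surjection $Lv\to Lv/M$ to a genuine graded section, even though $L$ has no multiplicative identity. The only other items to check are routine and already in hand, namely the Zorn's lemma step producing $M$ (exactly as the text does for $Lv$) and the standard ``sum of simple submodules is a direct sum'' fact transported verbatim to the graded module category; notice that graded von Neumann regularity is not needed for this particular lemma.
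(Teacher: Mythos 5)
Your proof is correct and takes essentially the same route as the paper's: the paper also argues by contradiction from a vertex $v$ outside a sum of minimal graded left ideals, embeds the trace of that sum in a maximal graded $L$-submodule of $Lv$ via Zorn's Lemma, and uses the projectivity hypothesis to split off a new minimal graded left ideal inside $Lv$, yielding the contradiction. The only cosmetic difference is that the paper works directly with a Zorn-maximal independent family of minimal graded left ideals (so the direct sum is built in from the start), whereas you work with the full graded socle $Soc^{gr}(L)$ and then invoke the standard fact that a sum of graded-simple submodules is a direct sum of a subfamily.
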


\begin{proof}
Our hypothesis implies that every maximal graded left ideal of $L$ is a direct
summand and so $L$ contains minimal graded left ideals. By Zorn's Lemma choose
a maximal family $%
\mathcal{F}%
$ of independent minimal graded left ideals of $L$ and let $M=%
{\displaystyle\bigoplus\limits_{X\in\mathcal{F}}}
X$. We claim that $M=L$. Because, otherwise, there is a vertex $v\notin M$. In
this case, we can embed (by Zorn's Lemma) $M\cap Lv$ inside a maximal graded
$L$-submodule $N$ of $Lv$ which is easily seen as a direct summand of $Lv$,
say $Lv=N\oplus X^{\prime}$. Then $%
\mathcal{F}%
\cup\{X^{\prime}\}$ contradicts the maximality of $%
\mathcal{F}%
$. Thus $L=%
{\displaystyle\bigoplus\limits_{X\in\mathcal{F}}}
X$ is a direct sum of minimal graded left ideals.
\end{proof}

Before we state our next theorem, we consider the following preparatory lemma.

\begin{lemma}
\label{Infinite emitter not fp}Let $v\in E^{0}$ be an infinite emitter. Then
the corresponding graded-simple module $S_{v\infty}$ (considered in section 3)
is not finitely presented.
\end{lemma}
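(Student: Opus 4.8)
The plan is to show that the graded-simple module $S_{v\infty}$, induced by an infinite emitter $v$, cannot admit a finite free (or finitely generated projective) presentation. First I would recall the structure of $S_{v\infty}=M(X)$, where $X$ is the set of all finite paths ending at $v$. As a graded $L$-module it is generated by the single element $v$ (the trivial path at $v$), since every basis element $q\in X$ with $s(q)=w$ is obtained by applying an appropriate monomial to $v$; concretely $q=qv$ acts to send $v$ to $q$. Thus there is a graded surjection $\pi:Lv\longrightarrow S_{v\infty}$ sending $av\longmapsto a\cdot v$, and $S_{v\infty}\cong_{gr} Lv/N$ where $N=\ker\pi$. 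To prove that $S_{v\infty}$ is not finitely presented, it suffices to show that this kernel $N$ is not a finitely generated graded submodule of $Lv$.

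The key computation is to identify $N$ explicitly. Since $v$ is an infinite emitter, the CK-2 relation is absent at $v$, and the element $v\in Lv$ maps to the basis vector $v\in M(X)$, while for each edge $e$ with $s(e)=v$ the element $ee^{\ast}$ acts on $v$ by first applying $S_{e^{\ast}}$. Because $v$ is an infinite emitter, $v\notin\bigoplus_{e\in s^{-1}(v)} Lee^{\ast}$ as a genuine relation, and I expect the kernel $N$ to be generated by the infinitely many homogeneous elements $\{ee^{\ast}: e\in s^{-1}(v)\}$ together with (or equivalently realized through) the elements $ee^{\ast}v$ that annihilate $v$ in the module. More precisely, for each edge $e$ emitted by $v$ we have $S_{e^{\ast}}(v)=0$ in $M(X)$ because $v$ (the length-zero path at $v$) is not in $X_e$ — the paths in $X_e$ have $e$ as their initial edge and strictly positive length. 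Hence $ee^{\ast}\cdot v=0$, so each $ee^{\ast}v\in N$. I would then argue that these infinitely many homogeneous generators are independent modulo any finite subset: since the idempotents $ee^{\ast}$ for distinct $e$ are orthogonal, no finite subcollection can generate the rest, so $N$ requires infinitely many generators and is not finitely generated.

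The forward-looking structure of the argument is therefore: (i) exhibit the graded presentation $0\to N\to Lv\to S_{v\infty}\to 0$; (ii) use graded von Neumann regularity via the principle (**) stated in the excerpt — namely that every finitely generated graded left ideal of $L$ is a direct summand — to reduce ``finitely presented'' to ``$N$ finitely generated''; and (iii) derive a contradiction from the infinitude of $s^{-1}(v)$. Step (ii) is where I would lean on the graded-projectivity of $L$: if $S_{v\infty}$ were finitely presented, then since $L$ is graded von Neumann regular, the kernel $N$ of any map from a finitely generated graded free module would itself be finitely generated, and being a finitely generated graded left ideal it would be a direct summand of $Lv$ by (**), forcing $S_{v\infty}$ to be graded projective and $N$ to be finitely generated.

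The main obstacle I anticipate is step (iii): pinning down precisely that $N$ is genuinely infinitely generated, rather than merely generated by an infinite set that happens to be finitely generated. The cleanest route is to exhibit, for each finite subset $F\subseteq s^{-1}(v)$, an explicit homogeneous element of $N$ not lying in the submodule generated by $\{ee^{\ast}v: e\in F\}$ — for instance $f e f^{\ast}v$ or $ff^{\ast}v$ for an edge $f\notin F$ — using the orthogonality $e^{\ast}f=\delta_{ef}\,r(e)$ of the CK-1 relations to verify non-membership by a degree-zero projection argument at $v$. Since $v$ emits infinitely many edges, no finite $F$ suffices, so $N$ cannot be finitely generated, completing the proof that $S_{v\infty}$ is not finitely presented.
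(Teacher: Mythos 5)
Your argument is correct in substance, but it takes a genuinely different route from the paper's. Both proofs begin identically: write $S_{v\infty}\cong Lv/N$ and observe that finite presentation would force $N$ to be finitely generated (this step is Schanuel's Lemma and needs neither graded von Neumann regularity nor the principle (**), which you invoke but never actually use --- the graded-projectivity of $S_{v\infty}$ plays no role in your contradiction). From there the paper goes structural: by (**) the finitely generated graded $N$ is a direct summand, so $S_{v\infty}$ would be graded-projective, hence isomorphic to a minimal graded left ideal, hence by Proposition \ref{graded simple iso} isomorphic to $Lu(n)$ with $u$ a line point or a Laurent vertex; both cases are then excluded by comparison with Chen's modules --- $S_{v\infty}$ is simple but not isomorphic to $N_{w}$ or $V_{[p]}$ (Theorem 2.9 of \cite{R-2}), while $Lu$ for a Laurent vertex is not simple (Theorem \ref{Simple module N-vc}(1)). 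You instead compute the kernel of the canonical presentation explicitly and show it is infinitely generated. Your route is more elementary and self-contained: it needs neither the classification of minimal graded left ideals nor the results of \cite{C} and \cite{R-2}, and it proves the ungraded statement (that $S_{v\infty}$ is not finitely presented as a plain $L$-module) at the same time. The paper's route is shorter given the machinery already developed in Sections 2--3 and illustrates how that machinery gets used.

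Two steps in your sketch need to be nailed down, both routine. First, you verify only the inclusion $\sum_{e\in s^{-1}(v)}Lee^{\ast}\subseteq N$ and say you ``expect'' equality; you need the reverse inclusion. It holds because $Lv$ is spanned by monomials $\alpha\beta^{\ast}$ with $s(\beta)=v$ and $r(\alpha)=r(\beta)$, the map $\pi$ kills every such monomial with $|\beta|\geq 1$ (since $\beta^{\ast}\cdot v=0$) and sends the monomials with $\beta=v$, i.e.\ the paths $\alpha$ ending at $v$, bijectively onto the basis $X$ of $M(X)$; hence $\ker\pi$ is exactly the span of the monomials with $|\beta|\geq1$, which is $\sum_{e\in s^{-1}(v)}Le^{\ast}=\sum_{e\in s^{-1}(v)}Lee^{\ast}$. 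Second, showing that no finite subset of $\{ee^{\ast}v:e\in s^{-1}(v)\}$ generates $N$ is not by itself enough: ``not finitely generated'' quantifies over arbitrary finite subsets of $N$. This is repaired by the equality just proved: any finitely many elements of $N$ involve only finitely many monomials, hence lie in $\bigoplus_{e\in F}Le^{\ast}$ for some finite $F\subseteq s^{-1}(v)$; then for $f\in s^{-1}(v)\setminus F$, right multiplication by $f$ and CK-1 give $f^{\ast}f=r(f)\neq0$ while $\bigl(\bigoplus_{e\in F}Le^{\ast}\bigr)f=0$, so $f^{\ast}\in N$ does not lie in $\bigoplus_{e\in F}Le^{\ast}$. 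With these two points filled in, your proof is complete.
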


\begin{proof}
Suppose, on the contrary, $S_{v\infty}$ is finitely presented. Note that
$S_{v\infty}\cong Lv/N$ for some $N$\ which will be finitely generated by
supposition. Then, by statement (**) above, $S_{v\infty}$ will be a
graded-projective $L$-module. This means that $S_{v\infty}$ is isomorphic to a
minimal graded left ideal of $L$ and hence, by Proposition
\ref{graded simple iso}, $S_{v\infty}\cong Lu(n)$ where $u$ is either a line
point or a Laurent vertex. Both cases lead to a contradiction. Because if $u$
is a line point, then by Chen \cite{C}, $Lu\cong N_{w}$ or $V_{[p]}$ according
as $T_{E}(u)$ is a finite path ending at a sink $w$ or an infinite path $p$
and this is in contradiction to Theorem 2.9 of \cite{R-2} \ whose proof shows
that $S_{v\infty}$ is not isomorphic to either $N_{w}$ or to $V_{[p]}$.
Likewise, if $u$ is a Laurent vertex, then $Lu$, by Theorem
\ref{Simple module N-vc} (1), cannot be a simple left ideal, but it was shown
in \cite{R-2} that $S_{v\infty}$ is indeed a simple left $L$-module. These
contradictions show that $S_{v\infty}$ cannot be finitely presented.
\end{proof}

\begin{theorem}
\label{All graded-simple fp} Let $E$ be an arbitrary graph . Then the
following are equivalent for $L:=L_{K}(E)$.

(1) Every graded-simple left $L$-module is finitely presented;

(2) $L$ is graded semi-simple, that is, $L$ is a direct sum of minimal graded
left ideals;

(3) The graph $E$ is row-finite and for each vertex $u$ there is an integer
$n\geq0$ such that every path beginning with $u$ and having length $\geq n$
ends at a line point or a Laurent vertex.
\end{theorem}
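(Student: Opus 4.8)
The plan is to establish $(1)\Leftrightarrow(2)$ using graded von Neumann regularity together with Lemma \ref{All simple projective}, and then to prove $(2)\Leftrightarrow(3)$ by recasting graded semi-simplicity as the statement $L=Soc^{gr}(L)$ and reading off the graph condition from the structure theorem (Theorem \ref{Graded Socle}).

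For $(2)\Rightarrow(1)$, if $L$ is a direct sum of minimal graded left ideals then every graded $L$-module is semi-simple, so any graded-simple $S$ is isomorphic to a minimal graded left ideal, which by Proposition \ref{graded simple iso} has the form $Lu(n)$ with $u$ a line point or a Laurent vertex. Such a module is cyclic, hence finitely generated, and a direct summand of $L$, hence graded-projective; a finitely generated projective module is finitely presented. For $(1)\Rightarrow(2)$ I would write a graded-simple $S$ as $Lv/N$ for a maximal graded submodule $N$ of $Lv$ (as recorded before Lemma \ref{All simple projective}). Finite presentation of $S$ forces $N$ to be finitely generated, using that $Lv$ is graded-projective. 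Since $L$ is graded von Neumann regular by Theorem \ref{Theorem of Hazrat}, statement $(\ast\ast)$ makes the finitely generated graded left ideal $N$ a direct summand of $L$, and hence of $Lv$; therefore $S$ is graded-projective. Lemma \ref{All simple projective} then yields that $L$ is graded semi-simple.

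For $(2)\Leftrightarrow(3)$, write $W$ for the set of all line points and all Laurent vertices. The key reduction is that $L$ is graded semi-simple exactly when $L=Soc^{gr}(L)$ (a sum of graded-simple submodules is a direct sum of a subfamily), and by Theorem \ref{Graded Socle} the graded socle is the two-sided ideal generated by $W$; this ideal equals $L$ precisely when the hereditary saturated closure $\overline{W}$ satisfies $\overline{W}=E^{0}$. So the task becomes $\overline{W}=E^{0}\Leftrightarrow(3)$. I would first note that $W$ is hereditary and that every vertex of $W$ emits at most one edge, while saturation adjoins only regular vertices; consequently $\overline{W}$ can contain no infinite emitter, so $\overline{W}=E^{0}$ already forces $E$ to be row-finite. (Alternatively, row-finiteness under $(2)$ follows directly from Lemma \ref{Infinite emitter not fp}, since an infinite emitter $v$ would make $S_{v\infty}$ a graded-simple module that is not finitely presented, contradicting $(2)\Rightarrow(1)$.)

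Assuming $E$ row-finite, I would write $\overline{W}=\bigcup_{k}W_{k}$ with $W_{0}=W$ and $W_{k+1}$ obtained by adjoining each regular $v$ with $r(s^{-1}(v))\subseteq W_{k}$, and set $\ell(v)$ to be the least $k$ with $v\in W_{k}$. For $\overline{W}=E^{0}\Rightarrow(3)$: if a vertex on a path has level $\geq 1$ then the next vertex has strictly smaller level, so no infinite path can avoid $W$; since $W$ is hereditary, once a path reaches $W$ its range remains in $W$; finally König's lemma (finite branching) bounds uniformly the length after which every path from a fixed $u$ must end in $W$, giving the integer $n$ of $(3)$. For $(3)\Rightarrow\overline{W}=E^{0}$: I would induct on the bound $b(u)$ from $(3)$ — if $b(u)=0$ then $u\in W$, and otherwise $u$ is regular with each $r(e)$ having bound at most $b(u)-1$, hence $r(s^{-1}(u))\subseteq\overline{W}$ by induction, so $u\in\overline{W}$ by saturation. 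I expect the main obstacle to be exactly this half: producing the \emph{uniform} bound $n$ in $(3)$, for which the level function, the strictly-decreasing-level argument along infinite paths, and the König's lemma compactness step are the crux, together with the bookkeeping needed to confirm that the ideal generated by $W$ equals $L$ precisely when $\overline{W}=E^{0}$ (the breaking-vertex subtleties being harmless because $W$ is generated by vertices emitting at most one edge).
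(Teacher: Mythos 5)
Your proof is correct and follows the same skeleton as the paper: $(1)\Leftrightarrow(2)$ via graded von Neumann regularity, Schanuel, statement $(\ast\ast)$ and Lemma \ref{All simple projective}, and $(2)\Leftrightarrow(3)$ by identifying graded semi-simplicity with $L=Soc^{gr}(L)$ and invoking Theorem \ref{Graded Socle}. The genuine difference is in how the graph-theoretic content is handled. The paper gets row-finiteness from Lemma \ref{Infinite emitter not fp} and then simply cites Lemma 1.4 of \cite{ARS1} for the equivalence between ``$E^{0}$ is the saturated closure of the line points and Laurent vertices'' and the uniform-bound condition of $(3)$; you instead prove that combinatorial lemma from scratch. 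Your level-function argument is sound: $\overline{W}=\bigcup_k W_k$ with only $\omega$ steps needed (saturation adjoins only regular vertices, whose finitely many ranges lie in some $W_k$), levels strictly decrease along edges until reaching $W$, and $W$ is hereditary. Two small remarks. First, your K\"onig's lemma step is redundant: the level $\ell(u)$ itself is already the uniform bound $n$ in $(3)$, since after at most $\ell(u)$ edges any path from $u$ has entered $W$ and then stays there. Second, in the induction for $(3)\Rightarrow\overline{W}=E^{0}$ you assert that a vertex with positive bound is regular; this skips sinks, but sinks are line points, hence lie in $W$ and have minimal bound $0$, so the base case absorbs them. Your purely graph-theoretic derivation of row-finiteness from $(2)$ (vertices of $W$ emit at most one edge and saturation adjoins only regular vertices, so $\overline{W}$ contains no infinite emitter) is a nice alternative that makes Lemma \ref{Infinite emitter not fp} unnecessary for this implication, whereas the paper's route through that lemma uses the module $S_{v\infty}$; both are valid, and yours keeps the $(2)\Leftrightarrow(3)$ equivalence entirely combinatorial and self-contained.
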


\begin{proof}
Assume (1). \ As we noted earlier, every graded-simple $L$-module $S$ is
graded isomorphic to $Lv/N$ where $v$ is a vertex and $N$ is a graded
submodule of $Lv$. Now $L$ and hence $Lv$ is a graded-projective $L$-module.
So if $S$ is finitely presented, then by Schanuel's Lemma, $N$ becomes a
finitely generated graded left ideal of $L$. By Theorem
\ref{Theorem of Hazrat}, $L$ is graded von Neumann regular and so, by
$(\ast\ast)$, $N$ is a direct summand of $L$ and that of $Lv$. This implies
that $S$ is graded-projective. Since every graded-simple $L$-module is
graded-projective, we appeal to Lemma \ref{All simple projective} to conclude
that $L$ is a graded semi-simple algebra. This proves (2).

Assume (2). Since every graded submodule of a graded semi-simple module is a
direct summand and since $L$ is graded projective, every graded homomorphic
image of $L$ and, in particular, every graded-simple $L$-module $S$ is
projective. Clearly $S$ is finitely presented. This proves (1).

Assume (1) and (2). By Lemma \ref{Infinite emitter not fp}, $E$ must be
row-finite. Also (2) implies $L=Soc^{gr}(L)$. From Theorem \ref{Graded Socle},
we conclude that $E^{0}$ is the saturated closure of the set of all line
points and Laurent vertices in $E$. This means (see e.g. Lemma 1.4,
\cite{ARS1}) that for each vertex $u$ there is an integer $n\geq0$ such that
every path beginning with $u$ and having length $\geq n$ ends at a line point
or a Laurent vertex, thus proving (3).

Assume (3). Since $E$ is row-finite, the stated condition implies that $E^{0}$
is the saturated closure of the set of all line points and Laurent vertices in
$E$. By Theorem \ref{Graded Socle}, $L=Soc^{gr}(L)$ is a direct sum of minimal
graded left ideals. This proves (2).
\end{proof}

We next consider when the Chen simple module $V_{[p]}$ corresponding to an
infinite path $p$ is finitely presented.

\ As mentioned in the introductory paragraphs of this section, we now use
Proposition \ref{V_[p] graded when p irrational}, to present a\ short and
simpler proof of a more general\ version of Theorem 2.20 of \cite{AMT} without
requiring the additional condition that no vertex on $p$ is an infinite emitter.

\begin{theorem}
\label{V_p finitely presented} \ Let $E$ be an arbitrary graph, $L=L_{K}(E)$
and let $p$ be an infinite irrational path in $E$. Then the following
properties are equivalent:

(i) \ \ The Chen simple left $L$-module $V_{[p]}$ is finitely presented;

(ii) \ $V_{[p]}$ is a projective left $L$-module;

(iii) The path $p$ contains a vertex which is a line point.
\end{theorem}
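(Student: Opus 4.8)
The plan is to prove the cycle of implications (i) $\Rightarrow$ (ii) $\Rightarrow$ (iii) $\Rightarrow$ (i). Throughout I would exploit that, since $p$ is irrational, $V_{[p]}$ is graded-simple by Proposition \ref{V_[p] graded when p irrational} and $p$ is a homogeneous generator of it. Consequently $V_{[p]} \cong_{gr} Lv(k)/N$ for the vertex $v=s(p)$, some shift $k\in\mathbb{Z}$, and a graded left ideal $N\subseteq Lv$, while $Lv$ is a graded direct summand of $L=\bigoplus_{u}Lu$ and hence graded-projective. This normal form is what lets the general machinery of the earlier sections apply.

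For (i) $\Rightarrow$ (ii) I would argue exactly as in the proof of Theorem \ref{All graded-simple fp}: if $V_{[p]}$ is finitely presented, then comparing the presentation $0\to N\to Lv(k)\to V_{[p]}\to 0$ with a finite one via Schanuel's Lemma (using that $Lv$ is graded-projective) forces $N$ to be a finitely generated graded left ideal. By Theorem \ref{Theorem of Hazrat}, $L$ is graded von Neumann regular, so statement $(\ast\ast)$ makes $N$ a graded direct summand of $Lv$; hence $V_{[p]}$ is (graded-)projective, giving (ii).

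The crux is (ii) $\Rightarrow$ (iii). First I would upgrade ordinary projectivity to graded-projectivity: once $V_{[p]}$ is projective, the sequence $0\to N\to Lv(k)\to V_{[p]}\to 0$ splits, so $N$ is a direct summand of the cyclic module $Lv$ and is therefore finitely generated; being a graded left ideal, $(\ast\ast)$ then supplies a graded splitting, so $V_{[p]}$ is graded-isomorphic to a minimal graded left ideal of $L$. By Proposition \ref{graded simple iso} that ideal satisfies $\cong_{gr}Lu(n)$ with $u$ a line point or a Laurent vertex. The Laurent case is excluded, since by Theorem \ref{Simple module N-vc}(1) such an $Lu$ is not a simple $L$-module, whereas $V_{[p]}$ is simple by Chen \cite{C}. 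Thus $u$ is a line point, and by Chen's description $Lu\cong V_{[q]}$ where $q$ is the infinite line segment issuing from $u$ (the sink alternative $N_w$ is ruled out because $V_{[p]}$ is built from infinite paths). Then $V_{[p]}\cong V_{[q]}$ forces $[p]=[q]$, i.e.\ $p$ and $q$ share a common tail; since every vertex of $T_{E}(u)$ is a line point, that common tail, and hence $p$ itself, passes through a line point. I expect this step to be the main obstacle: both the passage from ordinary to graded projectivity needed to invoke Proposition \ref{graded simple iso}, and the translation of the module isomorphism $V_{[p]}\cong Lu$ into the purely combinatorial conclusion that $p$ contains a line point, which rests on Chen's classification of the modules $V_{[q]}$ up to tail-equivalence.

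Finally, for (iii) $\Rightarrow$ (i), suppose $p$ meets a line point $w$, say $w=s(\tau^{>k}(p))$. Then $[p]=[\tau^{>k}(p)]$ gives $V_{[p]}\cong V_{[\tau^{>k}(p)]}$, and Chen's identification of the simple module attached to a line point yields $V_{[p]}\cong Lw$ up to a grading shift. Since $w$ is a line point, $Lw$ is a minimal graded left ideal by Proposition \ref{Lv simple implies line point or Laurent} and a direct summand of $L=\bigoplus_{u}Lu$; being cyclic and projective it is finitely presented, so $V_{[p]}$ is finitely presented, establishing (i) (and simultaneously reconfirming (ii)) and closing the cycle.
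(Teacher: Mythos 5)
Your proposal is correct, and on two of the three implications it coincides with the paper's own proof: your (i) $\Rightarrow$ (ii) is exactly the paper's argument (the graded presentation $0\rightarrow N\rightarrow Lv\rightarrow V_{[p]}\rightarrow 0$ with $p$ homogeneous via Proposition \ref{V_[p] graded when p irrational}, Schanuel's Lemma, Theorem \ref{Theorem of Hazrat} and statement $(\ast\ast)$), and your (iii) $\Rightarrow$ (i) is the paper's appeal to Chen's identification $V_{[p]}\cong Lw$ for a line point $w$ on $p$, with $Lw$ finitely generated projective. Where you genuinely diverge is (ii) $\Rightarrow$ (iii). The paper handles this step in two lines, entirely in the ungraded category: a simple projective module is isomorphic to a minimal left ideal, hence (by the result of \cite{AMMS} recalled in Section 2) to $Lu$ with $u$ a line point, and Proposition 4.3 of \cite{C} then converts $V_{[p]}\cong Lu$ into the assertion that $p$ contains a line point; notably, this implication does not even use irrationality of $p$. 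You instead upgrade ordinary projectivity to graded projectivity (the ungraded splitting makes $N$ cyclic, being the image of $v$ under the projection, and $(\ast\ast)$ then yields a graded splitting --- the same manoeuvre the paper performs in Theorem \ref{All graded-simple fp}), invoke Proposition \ref{graded simple iso} to get $V_{[p]}\cong_{gr}Lu(n)$ with $u$ a line point or a Laurent vertex, exclude the Laurent case by simplicity of $V_{[p]}$ (exactly as in Lemma \ref{Infinite emitter not fp}), and finally unwind Chen's tail-equivalence classification of the modules $V_{[q]}$ and $N_{w}$ to locate a line point on $p$. Both routes are sound; the paper's is shorter and grading-free at this step, while yours stays inside the graded framework of Section 2 (so it genuinely needs the graded-simplicity of $V_{[p]}$, i.e.\ the irrationality hypothesis) and replaces the black-box citation of Chen's Proposition 4.3 by an explicit tail-equivalence argument, which makes it more self-contained relative to the paper's own results.
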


\begin{proof}
(i) =
$>$
(ii). As noted in Proposition \ref{V_[p] graded when p irrational}, $V_{[p]}$
is a graded-simple $L$-module in which $p$ is a homogeneous element. So if
$v=s(p)$, we then get a graded exact sequence
\[
0\rightarrow N\rightarrow Lv\overset{\theta}{\rightarrow}V_{[p]}\rightarrow0
\]
where $\theta(av)=av\cdot p$. If $V_{[p]}$ is finitely presented, then $N$
will be finitely generated. Since, by Theorem \ref{Theorem of Hazrat}, $L$ is
graded von Neumann regular, statement (**) above implies that $N$ will then be
a direct summand of $L$. Thus the above graded exact sequence splits and we
conclude that $V_{[p]}$ is graded-projective.

(ii) =%
$>$
(iii). If \ $V_{[p]}$ is a projective left $L$-module, then, being simple,
$V_{[p]}$ is isomorphic to a minimal left ideal $Lu$ where $u$ is a line
point. By Proposition 4.3 of \cite{C}, $p$ contains a a vertex which is a line
point, thus proving (iii).

(iii) =%
$>$
(i). Again if $p$ contains a vertex $u$ which is a line point, then
$V_{[p]}\cong Lu$, by (\cite{C}, Proposition 4.3). Since $Lu$ is projective,
$V_{[p]}$ is then clearly finitely presented.
\end{proof}

\section{Leavitt Path Algebras over which every simple module is graded}

In this short section, we characterize the Leavitt path algebras of arbitrary
graphs over which every simple left module is graded. \ A well-known theorem
(see \cite{T}) states that every two-sided ideal of a Leavitt path algebra
$L_{K}(E)$ is graded if and only if the graph $E$ satisfies Condition (K). A
natural question is to investigate when every one-sided ideal of $L$ is
graded. We show that this happens exactly when the graph $E$ contains no
cycles. A useful role is played by Theorems \ref{Gene-Rnga Theorem} and
\ref{V_p finitely presented}.

\begin{theorem}
Let $E$ be an arbitrary graph. Then for the Leavitt path algebra $L:=L_{K}(E)$
the following are equivalent:

(1) Every left/right ideal of $L$ is graded;

(2) Every simple left/right $L$-module is graded;

(3) The graph $E$ is acyclic.
\end{theorem}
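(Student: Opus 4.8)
The plan is to prove the cycle of implications $(1)\Rightarrow(2)\Rightarrow(3)\Rightarrow(1)$, exploiting the results developed earlier in the paper. The implication $(1)\Rightarrow(2)$ is essentially immediate: if every left ideal of $L$ is graded, then for any simple left module $S\cong Lv/N$ (using the description of simple modules via maximal left ideals), both $Lv$ and the maximal submodule $N$ are graded, so the quotient $S$ inherits a grading compatible with the $L$-action. The symmetric argument handles the right-module case.

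For $(2)\Rightarrow(3)$, the natural strategy is contrapositive: assume $E$ contains a cycle $c$ and produce a simple $L$-module that is not graded. If $c$ is a cycle, then $p=ccc\cdots$ is an infinite \emph{rational} path, and Chen's construction yields the simple module $V_{[p]}$. By Proposition~\ref{V_[p] graded when p irrational}, $V_{[p]}$ is graded-simple if and only if $p$ is irrational; since $p$ is rational here, $V_{[p]}$ is a simple module that admits no compatible grading. This directly contradicts (2), so $E$ must be acyclic. I expect this to be the cleanest step, since Proposition~\ref{V_[p] graded when p irrational} does all the heavy lifting.

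The implication $(3)\Rightarrow(1)$ is where I expect the main obstacle to lie, and it is the step that will require the structural machinery. The plan is to invoke Theorem~\ref{Gene-Rnga Theorem}: since $E$ is acyclic, $L$ is a directed union of graded subalgebras $B=A\oplus K\epsilon_{1}\oplus\cdots\oplus K\epsilon_{n}$, where $A$ is the image of a graded \emph{monomorphism} $\theta$ from $L_{K}(F_{B})$ with $F_{B}$ a finite acyclic graph. The key reduction is that a left ideal of $L$ is graded if and only if its intersection with each $B$ is graded (since gradings pass through directed unions), so it suffices to show every left ideal of each finite-dimensional acyclic piece $B$ is graded. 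For a finite acyclic graph, the structure theorem (equation (5) in the excerpt) identifies $L_{K}(F_{B})$ as a finite direct sum of matrix rings $M_{n}(K)$ over the field $K$ concentrated in degree zero, with the grading induced by path lengths as shifts. The hard part will be checking that in such a matrix ring with a shift-grading by $(\delta_{1},\ldots,\delta_{n})$ over a field concentrated in degree $0$, \emph{every} one-sided ideal is automatically homogeneous. This should follow because each left ideal of a matrix ring over a field is a sum of ``column spaces'' $M_{n}(K)e_{jj}$, and the homogeneous components of the grading decompose these into graded pieces; the absence of a cycle guarantees the base ring $K$ sits entirely in degree $0$, so there is no nontrivial mixing of degrees within a single matrix coordinate.

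In summary, the argument routes $(3)$ back to $(1)$ through the finite acyclic approximations supplied by Theorem~\ref{Gene-Rnga Theorem}, reducing the claim to a grading-theoretic fact about matrix algebras over a field. The main delicacy is justifying that the directed-union and the ring-direct-sum decomposition $B=A\oplus K\epsilon_{1}\oplus\cdots\oplus K\epsilon_{n}$ are compatible with the ``every left ideal is graded'' property, and that passing to the limit preserves gradedness; the matrix-ring computation itself is routine once the acyclicity has forced the field $K$ into degree $0$.
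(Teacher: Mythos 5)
Your implications (1)$\Rightarrow$(2) and (2)$\Rightarrow$(3) are exactly the paper's arguments and are fine, and your reduction of (3)$\Rightarrow$(1) to the finite acyclic pieces supplied by Theorem \ref{Gene-Rnga Theorem} is also precisely what the paper does (and that reduction is sound: if $a\in M\cap B_{\lambda}$ and $M\cap B_{\lambda}$ is a graded left ideal of the graded subalgebra $B_{\lambda}$, the homogeneous components of $a$ land in $M$). The genuine gap is the matrix-ring step you dismissed as routine, and it is not a fixable slip. First, your structural claim is wrong: the left ideals of $M_{n}(K)$ are not the sums of column spaces $M_{n}(K)e_{jj}$; they are the sets $\{A:\operatorname{Row}(A)\subseteq W\}$ for \emph{arbitrary} subspaces $W\subseteq K^{n}$, and only coordinate subspaces $W$ yield sums of the standard columns. (Every left ideal is \emph{isomorphic} to a direct sum of copies of the column module, but gradedness of a submodule of $L$ requires equality, not isomorphism.) Second, and decisively, the statement you are trying to verify is false whenever the shifts $\delta_{i}$ are not all equal. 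Take $E$ to be the acyclic graph with two vertices $v_{1},v_{2}$ and one edge $e$ from $v_{1}$ to $v_{2}$, so $L\cong_{gr}M_{2}(K)(0,1)$ via $v_{2}\mapsto e_{11}$, $e^{\ast}\mapsto e_{12}$, $e\mapsto e_{21}$, $v_{1}\mapsto e_{22}$. The left ideal $L(v_{2}+e^{\ast})=\operatorname{span}_{K}\{v_{2}+e^{\ast},\;v_{1}+e\}$ contains no nonzero homogeneous element at all: a general element $a(v_{2}+e^{\ast})+b(v_{1}+e)$ has components $ae^{\ast}$, $av_{2}+bv_{1}$, $be$ in degrees $-1,0,1$. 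Hence this left ideal is not graded in the sense $I=\bigoplus_{n}(I\cap L_{n})$, although $E$ is acyclic; symmetrically, $(v_{2}+e)L$ is a non-graded right ideal. So (3) does not imply (1) in the sense of "graded" that your own step (1)$\Rightarrow$(2) (and the paper's) actually uses.

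You should be aware that the paper's proof founders on exactly the same rock: from the fact that $L_{K}(F_{B})$ has an elementary grading (all matrix units homogeneous) it concludes that "every left ideal of each $B_{\lambda}$ is graded," which the example above refutes; an elementary grading makes the standard columns $M_{n}(K)e_{jj}$ graded but says nothing about the remaining left ideals, such as $M_{2}(K)(e_{11}+e_{12})$. So your proposal faithfully reproduces the paper's strategy, including its fatal step. What does survive is the equivalence (2)$\Leftrightarrow$(3): for (3)$\Rightarrow$(2) one only needs each simple module to admit \emph{some} graded presentation (in the example, the unique simple module is $Le_{11}=\operatorname{span}_{K}\{v_{2},e\}$, which is graded), and this must be argued directly -- for instance through the semisimple structure of the finite acyclic approximations -- rather than deduced from statement (1), which as written is simply false and would have to be weakened (say, to "every left ideal is gradable as an abstract $L$-module") or dropped.
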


\begin{proof}
(1) =%
$>$
(2) Suppose $S$ is a simple $L$-module. Since $L$ is a ring with local units,
there is a maximal left ideal $M$ such that $S\cong L/M$. Now, by assumption,
$M$ is a graded left ideal. Consequently, $L/M$ and hence $S$ can be made a
graded $L$-module.

(2) =%
$>$
(3). Now $E$ must be acyclic, because otherwise $E$ will contain a cycle $c$
and corresponding to the infinite rational path $p=ccc\cdot\cdot\cdot$, the
Chen simple module $V_{[p]}$ is, by Proposition
\ref{V_[p] graded when p irrational}, not a graded $L$-submodule, a
contradiction. Thus $E$ is acyclic.

(3) =%
$>$
(1). Suppose $E$ is acyclic. Now, by Theorem \ref{Gene-Rnga Theorem}, \ $L$ is
a directed union of graded subalgebras $B_{\lambda}$ where $\lambda\in I$, an
index set and where each $B_{\lambda}$ is a finite direct sum of copies of $K$
and a graded homomorphic image of a Leavitt path algebra of a finite acyclic
graph. By (Theorem 4.14, \cite{H-1}), Leavitt path algebras of finite acyclic
graphs are semi-simple algebras which have elementary gradings, that is, all
the matrix units are homogeneous. Consequently, every left ideal of each
$B_{\lambda}$ is graded. Since the $B_{\lambda}$ are graded subalgebras, each
$B_{\lambda}=%
{\displaystyle\bigoplus\limits_{n\in\mathbb{Z}}}
(B_{\lambda}\cap L_{n})$ where $L=%
{\displaystyle\bigoplus\limits_{n\in\mathbb{Z}}}
L_{n}$ is the $%
\mathbb{Z}
$-graded decomposition of $L$. Let $M$ be a left ideal of $L.$ To show that
$M$ is graded, we need only to show that $M=%
{\displaystyle\bigoplus\limits_{n\in\mathbb{Z}}}
(M\cap L_{n})$. Let $a\in M$. Then, for some $\lambda$, $a\in M\cap
B_{\lambda}$. Note that $M\cap B_{\lambda}=B_{\lambda}$ or a left ideal of
$B_{\lambda}$. Since every left ideal of $B_{\lambda}$ and in particular
$M\cap B_{\lambda}$ is graded, we can write $a=a_{n_{1}}+\cdot\cdot
\cdot+a_{n_{k}}$ where \
\[
a_{n_{i}}\in(M\cap B_{\lambda})\cap(B_{\lambda}\cap L_{n_{i}})\subseteq M\cap
L_{n_{i}}%
\]
for $i=1,\cdot\cdot\cdot,k$. This shows that $M=%
{\displaystyle\bigoplus\limits_{n\in\mathbb{Z}}}
(M\cap L_{n})$ and hence $M$ is a graded left ideal of $L$
\end{proof}

\section{ Graded self-injective Leavitt path algebras}

It was shown in Theorem \ref{Theorem of Hazrat} that every Leavitt path
algebra $L$ is always graded von Neumann regular. On the other hand, a
self-injective Leavitt path algebra is always von Neumann regular. So we wish
to describe the subclass of graded self-injective Leavitt path algebras. \ The
(non-graded) self-injective Leavitt path algebras were described in \cite{ARS}
as those associated to acyclic graphs. However, since the forgetful functor
$U:GrA\rightarrow\operatorname{mod}A$ is, in general, only a left-adjoint,
this implies that the graded projective modules are projective, but the graded
injective modules are not necessarily injective. In Theorem
\ref{Graded-injective LPAs}, we determine the class of graded self-injective
Leavitt path algebras. These algebras are shown to coincide with their graded
socle and thus are direct sums of matrix rings of arbitrary size over the
field $K$ and/or the ring $K[x^{t_{i}},x^{-t_{i}}]$ endowed with a natural
graded structure, where the $t_{i}$ are integers.

We begin with the following Lemma which is folklore. This will be used in
Lemma \ref{Large Goldie dimension}. For the sake of completeness, we outline
the proof.

\begin{lemma}
\label{Direct Product of K}Let $\tau$ be an infinite cardinal and let $K$ be
any field. Then the $K$-dimension of the direct product $P=%
{\displaystyle\prod\limits_{\tau}}
K$ of $\tau$ copies of $K$ is $\geq2^{\tau}$.
\end{lemma}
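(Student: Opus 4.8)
The plan is to write $P=\prod_{\tau}K=K^{I}$ for an index set $I$ with $|I|=\tau$, set $d=\dim_{K}P$, and prove $d\geq 2^{\tau}$ by comparing the cardinal $|K|$ with $2^{\tau}$ and treating the two resulting cases by different means. First I would record two elementary observations: the standard basis vectors $e_{i}$ ($i\in I$) are linearly independent, so $d\geq\tau\geq\aleph_{0}$ and in particular $P$ is infinite-dimensional; and $|P|=|K|^{\tau}\geq 2^{\tau}$, since $|K|\geq 2$.

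The key computational input is that an infinite-dimensional $K$-vector space of dimension $d$ has cardinality exactly $\max(d,|K|)$: counting the finitely supported functions on a basis of size $d$ gives $\sum_{n<\omega}d\cdot|K|^{n}=d\cdot\max(\aleph_{0},|K|)=\max(d,|K|)$, using that the number of $n$-element subsets of an infinite set of size $d$ is $d$ and that $d\geq\aleph_{0}$. Applying this to $P$ yields $\max(d,|K|)=|P|=|K|^{\tau}\geq 2^{\tau}$. If $|K|<2^{\tau}$, this immediately forces $d\geq 2^{\tau}$: otherwise $d$, $|K|$ and $\aleph_{0}$ would all be $<2^{\tau}$, hence so would their maximum, contradicting $\max(d,|K|)\geq 2^{\tau}$.

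It remains to handle the case $|K|\geq 2^{\tau}$, where the cardinality bound alone says nothing about $d$; this is the main obstacle, and I would resolve it with an explicit Vandermonde family. Fix a countable subset $\{i_{n}:n\in\omega\}$ of $I$ and, for each $a\in K$, define $v_{a}\in P$ by $v_{a}(i_{n})=a^{n}$ for $n\in\omega$ and $v_{a}=0$ on all remaining coordinates. For distinct $a_{1},\dots,a_{m}\in K$, a relation $\sum_{j}c_{j}v_{a_{j}}=0$ evaluated at the coordinates $i_{0},\dots,i_{m-1}$ is the linear system $\sum_{j}c_{j}a_{j}^{n}=0$ for $n=0,\dots,m-1$, whose coefficient matrix is Vandermonde with nonzero determinant $\prod_{j<k}(a_{k}-a_{j})$, forcing all $c_{j}=0$. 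Hence $\{v_{a}:a\in K\}$ is linearly independent and $d\geq|K|\geq 2^{\tau}$. (Since $|K|\geq 2^{\tau}\geq\aleph_{0}$, the field $K$ is infinite, so distinct scalars are available and the Vandermonde determinant is genuinely nonzero.) Combining the two cases gives $d\geq 2^{\tau}$ in all cases, which is the assertion.
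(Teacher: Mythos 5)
Your proof is correct, and it diverges from the paper's in a genuinely interesting way. Both arguments split on the size of $K$ and both handle the small-field case by cardinality counting via the identity $|V|=\max(\dim_{K}V,|K|)$ for infinite-dimensional $V$ (the paper writes this as $|P|=\dim_{K}(P)\cdot|K|$), but your case boundary is $|K|<2^{\tau}$ versus $|K|\geq 2^{\tau}$, whereas the paper's is $|K|\leq\tau$ versus $|K|>\tau$; your counting step therefore already absorbs fields with $\tau<|K|<2^{\tau}$ that the paper must defer to its second case. The real difference is in the large-field case. The paper passes to the prime subfield $Q\subseteq K$, applies the small-field case to $P'=\prod_{\tau}Q$ to get $\dim_{Q}(P')=2^{\tau}$, and then base-changes, using that $\bar{P}=K\otimes_{Q}P'$ embeds in $P$ with $\dim_{K}(\bar{P})=\dim_{Q}(P')$; note that the step $\bar{P}\subseteq P$ silently invokes the true but not quite trivial fact that $Q$-linearly independent elements of $Q^{\tau}$ remain $K$-linearly independent in $K^{\tau}$. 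You instead exhibit an explicit $K$-linearly independent family of size $|K|$ --- the Vandermonde vectors $v_{a}$, $a\in K$, supported on countably many coordinates --- giving $\dim_{K}P\geq|K|\geq 2^{\tau}$ outright. Your route is more self-contained (no tensor products, no unproven embedding) and is the classical device behind the full Erd\H{o}s--Kaplansky theorem $\dim_{K}(K^{\tau})=|K|^{\tau}$; what the paper's reduction buys is the exact value $\dim_{K}(P)=2^{\tau}$ in its first case and a uniform reduction of the general field to the countable (prime) field case. Either argument suffices for the inequality the paper actually uses in Lemma \ref{Large Goldie dimension}.
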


\begin{proof}
Clearly $|P|=\dim_{K}(P)\cdot|K|$. So if $|K|\leq\tau$, then $\dim
_{K}(P)=|P|=2^{\tau}$. Suppose $|K|>\tau$. Let $Q$ be the prime subfield of
$K$, so $|Q|$ is finite or countably infinite. If $P^{\prime}=%
{\displaystyle\prod\limits_{\tau}}
Q$, then, as noted earlier, $\dim_{Q}(P^{\prime})=2^{\tau}$. Let $\bar
{P}=K\otimes_{Q}P^{\prime}$. Then $\dim_{K}(\bar{P})=2^{\tau}$. Since $\bar
{P}\subseteq P$, $\dim_{K}(P)\geq2^{\tau}$.
\end{proof}

A \ useful tool in describing the graded self-injective Leavitt path algebras
is the following.

\begin{lemma}
\label{Large Goldie dimension}Suppose $v\in E^{0}$ such that $vLv$ has
$K$-dimension $\sigma$. If $\{Lx_{i}:i\in I\}$ is an independent family of
graded submodules of $Lv$ and $|I|=\tau\geq\max\{\sigma,\aleph_{0}\}$, then
$Lv$ is not graded injective as a left $L$-module.
\end{lemma}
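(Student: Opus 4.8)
The plan is to derive a contradiction from a dimension count on graded homomorphism spaces, using the characterization of graded injectivity through the restriction map. First I would record the graded endomorphism ring of $Lv$: since $v$ is a homogeneous idempotent and $Lv$ is generated by the single homogeneous element $v$, evaluation $\phi\mapsto\phi(v)$ (i.e. right multiplication) yields a graded isomorphism $\mathrm{HOM}_L(Lv,Lv)\cong vLv$. Here one uses $\phi(v)=\phi(v\cdot v)=v\phi(v)$ together with $\phi(v)=\phi(v)v$ to see that $\phi(v)\in vLv$, and conversely that right multiplication by an element of $vLv$ is a graded endomorphism. Consequently $\dim_K\mathrm{HOM}_L(Lv,Lv)=\dim_K vLv=\sigma$.

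Next I would set $N=\bigoplus_{i\in I}Lx_i$, which is a graded submodule of $Lv$ by the independence hypothesis. Using the universal property of the coproduct in the category of graded modules, $\mathrm{HOM}_L(N,Lv)=\prod_{i\in I}\mathrm{HOM}_L(Lx_i,Lv)$. Each factor is nonzero, since the inclusion $Lx_i\hookrightarrow Lv$ is a nonzero degree-zero graded homomorphism; selecting in each factor the $K$-line spanned by this inclusion embeds $\prod_{i\in I}K$ as a $K$-subspace of $\mathrm{HOM}_L(N,Lv)$. By Lemma \ref{Direct Product of K}, $\dim_K\prod_{\tau}K\geq 2^{\tau}$, so $\dim_K\mathrm{HOM}_L(N,Lv)\geq 2^{\tau}$.

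Finally I would invoke graded injectivity. If $Lv$ were graded injective, then since $N\hookrightarrow Lv$ is a graded monomorphism, the restriction map $\mathrm{HOM}_L(Lv,Lv)\to\mathrm{HOM}_L(N,Lv)$ would be surjective. But $\tau\geq\max\{\sigma,\aleph_0\}$ together with Cantor's theorem gives $\sigma\leq\tau<2^{\tau}$, so the domain has $K$-dimension $\sigma<2^{\tau}\leq\dim_K\mathrm{HOM}_L(N,Lv)$, the dimension of the codomain. A $K$-linear map from a space of strictly smaller dimension cannot be surjective, which is the desired contradiction; hence $Lv$ is not graded injective.

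The step requiring the most care is the identification of the graded homomorphism spaces over a ring without a global identity: I must verify that evaluation at $v$ genuinely produces $\mathrm{HOM}_L(Lv,Lv)\cong vLv$ when $L$ has only local units, and that $\mathrm{HOM}$ out of a direct sum is the product in the graded category (so that the counting is legitimate). Once these identifications are in place, the conclusion is purely the dimension inequality furnished by Lemma \ref{Direct Product of K} and Cantor's theorem, and no further computation is needed.
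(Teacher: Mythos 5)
Your proof is correct and takes essentially the same route as the paper's: the paper likewise deduces from graded injectivity the surjectivity of the restriction map $Hom_{gr}(Lv,Lv)\rightarrow Hom_{gr}(S,Lv)$ for $S=\bigoplus_{i\in I}Lx_{i}$, identifies the domain with a corner of $vLv$ (hence of $K$-dimension at most $\sigma$), embeds $\prod_{\tau}K$ into the codomain via diagonal families of endomorphisms, and concludes by Lemma~\ref{Direct Product of K} and Cantor's theorem. The only point to tidy is a notational wobble: the coproduct-to-product identity you invoke holds for degree-preserving morphisms, for which $\mathrm{HOM}_{L}(Lv,Lv)\cong vL_{0}v$ rather than $vLv$; but since your embedded copy of $\prod_{\tau}K$ consists of degree-zero families and $\dim_{K}vL_{0}v\leq\dim_{K}vLv=\sigma$, the dimension count is unaffected either way.
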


\begin{proof}
Suppose, by way of contradiction, $Lv$ is a graded injective $L$-module. Let
$S=%
{\displaystyle\bigoplus\limits_{i\in I}}
Lx_{i}$. \ Then from the graded exact sequence $0\rightarrow S\rightarrow Lv$
we get the exact sequence%
\[
Hom_{gr}(Lv,Lv)\rightarrow Hom_{gr}(S,Lv)\rightarrow0.
\]
Now $Hom_{gr}(Lv,Lv)\cong vL_{0}v$ and
\[
Hom_{gr}(S,Lv)\supseteq Hom_{gr}(S,S)\supseteq%
{\displaystyle\prod\limits_{i\in I}}
Hom_{gr}(Lx_{i},Lx_{i})\supseteq%
{\displaystyle\prod\limits_{\tau}}
K.
\]
This leads to a contradiction since $vLv$ and hence $Hom_{gr}(S,Lv)$ has
$K$-dimension $\leq\sigma$ while, by Lemma \ref{Direct Product of K}, $%
{\displaystyle\prod\limits_{\tau}}
K$ has $K$-dimension $\geq2^{\tau}>\sigma$. Hence $Lv$ cannot be graded injective.
\end{proof}

The proof of the next Lemma is essentially due Pere Ara. We gratefully thank
him for offering such a short proof in place of our earlier elongated proof.
In the proof we shall be using the following result which is well-known in the
category of modules over an arbitrary ring $R$: If $M$ is an injective left
$R$-module, then $S/J(S)$ is self-injective, where $S$ is the endomorphism
ring of $M$ and $J(S)$ is the Jacobson radical of $S$ (see e.g. Theorem 13.1
in \cite{L}). The same proof works for the graded version of this theorem.

\begin{lemma}
\label{Lv injective => v finite emitter}If $v\in E^{0}$ and $Lv$ is a graded
injective left $L$-module, then $v$ cannot be an infinite emitter.
\end{lemma}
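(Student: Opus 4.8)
The plan is to assume, for contradiction, that $v$ is an infinite emitter while $Lv$ is graded injective, and then to contradict Lemma \ref{Large Goldie dimension}. To invoke that lemma I must produce an independent family of graded submodules of $Lv$ whose cardinality is at least $\max\{\dim_K vLv,\aleph_0\}$. The first step is to notice that the corner $vLv$ is automatically infinite-dimensional here: since $v$ emits infinitely many edges, the set $\{ee^*:e\in s^{-1}(v)\}$ consists of pairwise orthogonal homogeneous idempotents of degree $0$ lying in $vLv$, orthogonality being immediate from the CK-1 relation $e^*f=0$ for $e\neq f$. Hence $\dim_K vLv\geq |s^{-1}(v)|\geq\aleph_0$, so, writing $\sigma:=\dim_K vLv$, we have $\max\{\sigma,\aleph_0\}=\sigma$, and it suffices to exhibit an independent family of graded submodules of $Lv$ of cardinality $\sigma$.

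The construction of such a family I would base on antichains of paths. For any set $\mathcal{A}$ of paths starting at $v$ that are pairwise incomparable under the prefix ordering, the idempotents $\{\alpha\alpha^*:\alpha\in\mathcal{A}\}$ are homogeneous of degree $0$, each lies in $Lv$ (as $v\alpha\alpha^*=\alpha\alpha^*$), and they are pairwise orthogonal: if $\alpha,\beta\in\mathcal{A}$ are distinct they first diverge at some vertex, so writing $\alpha=\gamma e\alpha'$ and $\beta=\gamma f\beta'$ with $e\neq f$ one computes $\alpha^*\beta=\alpha'^*e^*f\beta'=0$ by CK-1. Consequently $\{L\alpha\alpha^*:\alpha\in\mathcal{A}\}$ is an independent family of graded submodules of $Lv$ of size $|\mathcal{A}|$. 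In particular $s^{-1}(v)$ itself is an antichain, which already settles the case $\sigma=\aleph_0$. When $\sigma>\aleph_0$ one must analyse where the extra dimension of $vLv$ comes from: the monomials $v\alpha\beta^*v$ contribute to $\dim_K vLv$ only through bifurcations occurring on $T_E(v)$ (a single cycle without exits contributes only the countably many powers of $c$), so I would use these downstream bifurcation vertices, and in particular downstream infinite emitters, to manufacture an antichain $\mathcal{A}$ with $|\mathcal{A}|\geq\sigma$. Feeding $\mathcal{A}$ into Lemma \ref{Large Goldie dimension} then contradicts the graded injectivity of $Lv$ and forces $v$ not to be an infinite emitter.

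The main obstacle is precisely this last cardinality-matching step. The naive family $\{Lee^*\}_{e\in s^{-1}(v)}$ need not be large enough: if a vertex $w$ with $v\geq w$ emits strictly more edges than $v$ does, then $\dim_K vLv$ can strictly exceed $|s^{-1}(v)|$ — this already happens when $v$ emits $\aleph_0$ edges but some $w$ emits $\aleph_1$, using the orthogonal idempotents $\alpha ff^*\alpha^*$ for a path $\alpha$ from $v$ to $w$ and $f\in s^{-1}(w)$ — and then the crude estimate $2^{|s^{-1}(v)|}>\dim_K vLv$ can fail. The delicate point is therefore to show that the supremum of the sizes of antichains of paths from $v$ is at least $\dim_K vLv$; that is, to verify that the dimension of the corner is genuinely accounted for by the bifurcation (antichain) data of $T_E(v)$ together with the separate countable contribution of each cycle without exits, and then to assemble these contributions into a single antichain of the required cardinality $\sigma$. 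Once this bookkeeping is in place, the conclusion follows cleanly from Lemma \ref{Large Goldie dimension}.
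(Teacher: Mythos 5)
Your framework is sound as far as it goes: prefix-incomparable paths give pairwise orthogonal homogeneous idempotents, hence independent graded submodules $L\alpha\alpha^{\ast}$ of $Lv$, and $\dim_{K}vLv\geq|s^{-1}(v)|\geq\aleph_{0}$. But the proposal has a genuine gap exactly where you yourself flag "the main obstacle": you never actually produce an antichain of size at least $\sigma=\dim_{K}vLv$, and this is not bookkeeping --- it is the entire difficulty of the infinite-emitter case. When $E$ is not row-finite, $\sigma$ can be an arbitrarily large cardinal, and matching it with antichain sizes requires real combinatorics on the tree of finite paths leaving $v$: one must first bound $\sigma\leq\max\{\kappa,\aleph_{0}\}$ where $\kappa$ is the number of paths starting at $v$ (using that the monomials $\alpha\beta^{\ast}$ with $s(\alpha)=s(\beta)=v$ span $vLv$); then note that each level of the path tree is an antichain, so $\kappa\leq\aleph_{0}\cdot\sup_{n}(\text{level sizes})$; and finally handle attainment, since if $\sigma$ has countable cofinality and equals the unattained supremum of the level sizes, no single level suffices and one must assemble an antichain of size $\sigma$ across infinitely many levels (this can be done, e.g. by splitting the tree at the minimal nodes with fewer than $\kappa$ descendants and using regularity and pigeonhole arguments, but it is a genuine proof, and nothing of the sort appears in your proposal). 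Your closing sentence, "once this bookkeeping is in place, the conclusion follows cleanly," concedes the key step rather than proving it.

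This is also precisely the point where the paper takes a completely different route, and for good reason: it reserves Lemma \ref{Large Goldie dimension} for use only after row-finiteness has been established, when $\dim_{K}vLv\leq\aleph_{0}$ holds automatically (Proposition \ref{Injective implies row-finite}(ii)), so no cardinal matching is ever needed. For the infinite-emitter case itself, the paper argues through the endomorphism ring: graded injectivity of $Lv$ makes $vL_{0}v\cong End_{L}^{gr}(Lv)$ a graded self-injective ring (the graded version of Theorem 13.1 of \cite{L}, using that $J(vL_{0}v)=0$ because $L_{0}$ is von Neumann regular); Baer's criterion then forces the coordinate projection of $S=\bigoplus_{i}vL_{0}ve_{i}e_{i}^{\ast}$ onto its even-indexed part to be right multiplication by a single element $a=\sum_{j=1}^{m}k_{j}p_{j}q_{j}^{\ast}\in vL_{0}v$, and since only finitely many edges $e_{n}$ satisfy $e_{n}^{\ast}p_{j}\neq0$ for some $j$, evaluating against edges outside this finite set yields a contradiction. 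The paper's argument thus pits the infinitude of $s^{-1}(v)$ against the finite support of a single element of $L$; to salvage your approach you would need to supply the tree-combinatorial argument sketched above, or else switch to an argument of the paper's type.
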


\begin{proof}
Suppose, by way of contradiction, the vertex $v$ is an infinite emitter. Let
$X=\{e_{n}:n=1,2,\cdot\cdot\cdot\}$ be a countably infinite subset of edges
emitted by $v$. Recall that the zero degree component $L_{0}$ of $L$ is von
Neumann regular (see Theorem 5.3, \cite{AMP}). Consequently, $vL_{0}v$ is von
Neumann regular ring with identity and clearly its Jacobson radical
$J(vL_{0}v)=0$. Since $Lv$ is graded injective, $vL_{0}v\cong End_{L}%
^{gr}(Lv)$ is a graded self-injective ring with identity (from the graded
version of the proof of Theorem 13.1, \cite{L}).

Let $S=%
{\displaystyle\bigoplus\limits_{i=1}^{\infty}}
vL_{0}ve_{i}e_{i}^{\ast}\subset vLv$. Let $\pi:S\rightarrow%
{\displaystyle\bigoplus\limits_{i=1}^{\infty}}
vL_{0}ve_{2i}e_{2i}^{\ast}$ be the coordinate projection map so $\pi
(e_{2i}e_{2i}^{\ast})=e_{2i}e_{2i}^{\ast}$ for all $i$ with $Ker(\pi)=%
{\displaystyle\bigoplus\limits_{i=1}^{\infty}}
vL_{0}ve_{2i+1}e_{2i+1}^{\ast}$. Applying the Baer's criterion to the graded
injective left module $vL_{0}v$, there is an element $a\in vL_{0}v$ (of degree
$0$) such that $\pi(x)=xa$. Let
\[
a=%
{\displaystyle\sum\limits_{j=1}^{m}}
k_{j}p_{j}q_{j}^{\ast}\qquad\qquad(\#)
\]
where $k_{j}\in K$, the $p_{j},q_{j}$ are paths with $|p_{j}|=|q_{j}|$ and
where we can assume that the monomials $p_{j}q_{j}^{\ast}$ are all different.
First notice that it is not possible for $a=v$, since $a$ satisfies the
condition $e_{1}e_{1}^{\ast}a=0$. Also, we can assume, since $vav=a$, that
$k_{j}vp_{j}q_{j}^{\ast}v=k_{j}p_{j}q_{j}^{\ast}$ for all $j=1,\cdot\cdot
\cdot,m$. Let $T=\{e_{n}\in X:e_{n}^{\ast}p_{j}\neq0$ for some $p_{j}\}$.
Clearly $T$ is a finite set. We distinguish two cases.

Case 1: Suppose there is a term in $(\#)$ which is a multiple of $v$, say
$k_{i}v$ with $k_{i}\neq0$. Note that, by our assumption, no other term in
$(\#)$ will then be a scalar multiple of $v$. As a consequence, for any $j\neq
i$, the $j$-th term in $(\#)$ cannot be of the form $k_{j}q_{j}^{\ast}$
(without the real path $p_{j}$) or of the form $k_{j}p_{j}$ (without the ghost
path $q_{j}^{\ast}$). Because, otherwise $\deg(q_{j}^{\ast})=0$ or $\deg
(p_{j})=0$ so the $j$-th term will be of the form $k_{j}v$ for $j\neq i$, and
this is impossible by our assumption.

Now for each odd integer $2n+1$, we have
\[
0=e_{2n+1}e_{2n+1}^{\ast}a=e_{2n+1}e_{2n+1}^{\ast}k_{i}v+e_{2n+1}%
e_{2n+1}^{\ast}%
{\displaystyle\sum\limits_{j=1,j\neq i}^{m}}
k_{j}p_{j}q_{j}^{\ast}.
\]
If we choose $e_{2n+1}\notin T$, then $e_{2n+1}^{\ast}p_{j}=0$ for all $j$ and
from the above equation we get,
\end{proof}

$-k_{i}e_{2n+1}e_{2n+1}^{\ast}=%
{\displaystyle\sum\limits_{j=1,j\neq i}^{m}}
k_{j}e_{2n+1}e_{2n+1}^{\ast}p_{j}q_{j}^{\ast}=0$, a contradiction.

Case 2: Suppose, for each $j$, the $j$-th term in $(\#)$ contains the real
path $p_{j}$. Then for any $e_{2n}\notin T$, we have $e_{2n}e_{2n}^{\ast}%
=\pi(e_{2n}e_{2n}^{\ast})=e_{2n}e_{2n}^{\ast}a=%
{\displaystyle\sum\limits_{j=1}^{m}}
k_{j}e_{2n}e_{2n}^{\ast}p_{j}q_{j}^{\ast}=0$, again a contradiction. Hence $v$
cannot be an infinite emitter.

\begin{proposition}
\label{Injective implies row-finite} If $L:=L_{K}(E)$ is graded left
self-injective, then (i) the graph $E$ is row-finite and, (ii) for any vertex
$v$, $Lv$ cannot have infinitely many independent graded submodules.
\end{proposition}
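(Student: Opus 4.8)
The plan is to reduce the whole statement to the two lemmas just proved by first observing that graded self-injectivity of $L$ transfers to each principal ideal $Lv$. Since $L=\bigoplus_{v\in E^{0}}Lv$ and each $v$ is a homogeneous idempotent of degree $0$, every $Lv$ is a graded direct summand of $L$. A direct summand of a graded injective module is graded injective (the graded analogue of the classical fact), so the hypothesis that $L$ is graded left self-injective forces each $Lv$ to be a graded injective left $L$-module. This single observation feeds both parts.

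For (i), I would apply Lemma \ref{Lv injective => v finite emitter} vertex by vertex: since each $Lv$ is graded injective, no $v$ can be an infinite emitter, and therefore $E$ is row-finite. No further work is needed here.

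For (ii), the point is to invoke Lemma \ref{Large Goldie dimension}, whose hypothesis requires an independent family of graded submodules whose cardinality $\tau$ dominates $\sigma=\dim_{K}(vLv)$. The crucial preliminary step is to bound $\sigma$: having already established in (i) that $E$ is row-finite, I would note that the set of finite paths with source $v$ is countable (in a row-finite graph the paths of each fixed length from $v$ form a finite set, so their union over all lengths is countable). Since $vLv$ is spanned by the monomials $\alpha\beta^{\ast}$ with $s(\alpha)=s(\beta)=v$ and $r(\alpha)=r(\beta)$, it follows that $\sigma=\dim_{K}(vLv)\leq\aleph_{0}$, hence $\max\{\sigma,\aleph_{0}\}=\aleph_{0}$. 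Now suppose, for contradiction, that $Lv$ admits an infinite independent family of nonzero graded submodules. Choosing a nonzero homogeneous element $x_{i}$ in each member and passing to a countably infinite subfamily, I obtain an independent family $\{Lx_{i}:i\in\mathbb{N}\}$ of cyclic graded submodules of $Lv$ with $|\mathbb{N}|=\aleph_{0}\geq\max\{\sigma,\aleph_{0}\}$. Lemma \ref{Large Goldie dimension} then shows $Lv$ is not graded injective, contradicting the conclusion of the first paragraph; this rules out any such infinite family and proves (ii).

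The main obstacle is precisely the cardinality bookkeeping in the hypothesis of Lemma \ref{Large Goldie dimension}: without control on $\sigma$, an arbitrary infinite independent family need not be large enough to apply the lemma. This is why the two parts must be handled in order, since row-finiteness, obtained in (i), is exactly what collapses $\max\{\sigma,\aleph_{0}\}$ to $\aleph_{0}$ and lets a merely countably infinite family (which can always be extracted) do the job. The only routine verifications left are that passing to cyclic graded submodules $Lx_{i}$ preserves independence and that $x_{i}\in Lx_{i}$, which holds because $L$ has local units.
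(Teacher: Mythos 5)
Your proposal is correct and follows essentially the same route as the paper: each $Lv$ is a graded direct summand of $L$, hence graded injective, so (i) follows from Lemma \ref{Lv injective => v finite emitter}, and (ii) follows by using row-finiteness to bound $\dim_{K}(vLv)\leq\aleph_{0}$ and then invoking Lemma \ref{Large Goldie dimension}. Your extra bookkeeping (extracting nonzero homogeneous generators $x_{i}$ to get a countable independent family of cyclic graded submodules, and noting $x_{i}\in Lx_{i}$ via local units) only makes explicit a step the paper leaves tacit.
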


\begin{proof}
\textit{(i)} If $v$ is any vertex in  $E$, then the direct summand $Lv$ is an
injective left $L$-module. By Lemma \ref{Lv injective => v finite emitter},
$v$ cannot be an infinite emitter. Hence the graph $E$ must be row-finite. 
\end{proof}

\textit{(ii)} To prove the second statement, let $v\in E^{0}$. Since $E$ is
row-finite, there are only finitely many paths of a fixed length $n$ starting
from $v$ and\ so the set $Y$ of all finite paths $\alpha$ in $E$ with
$s(\alpha)=v$ is at most countable. Now every element of $vLv$ is a finite sum
of the form $%
{\displaystyle\sum\limits_{i=1}^{n}}
k_{i}p_{i}q_{i}^{\ast}$ where $k_{i}\in K$, $p_{i},q_{i}$ are finite paths
with $s(p_{i})=s(q_{i})=v$ and $r(p_{i})=r(q_{i})$. Since the cardinality of
the set of all finite subsets of the countable set $Y$ is again countable, we
conclude that the $K$-dimension of $vLv$ is $\leq\aleph_{0}$. Since $Lv$ is a
graded injective $L$-module, we appeal to Lemma \ref{Large Goldie dimension}
to conclude that $Lv$ cannot have infinitely many independent graded submodules.

\begin{corollary}
\label{NE condition}If $L=L_{K}(E)$ is graded self-injective, then no cycle in
$E$ will have an exit
\end{corollary}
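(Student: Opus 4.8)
The plan is to derive a contradiction with part (ii) of Proposition \ref{Injective implies row-finite}, which guarantees that, for a graded self-injective $L$, no $Lv$ can contain infinitely many independent graded submodules. Concretely, I would assume for contradiction that some cycle $c$ based at a vertex $v$ has an exit, and then manufacture an infinite orthogonal family of homogeneous idempotents inside $vLv$, producing infinitely many independent graded submodules of $Lv$.

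First I would fix notation for the exit. Writing the cycle as $c=gc'$, where $g$ is the initial segment running from $v$ to the bifurcation vertex $u=s(e_i)$ (with $e_i$ the first edge of $c'$), the exit hypothesis supplies an edge $f$ with $s(f)=u$ and $f\neq e_i$. For each $k\geq 0$ set $p_k=c^k g f$, a genuine path with $s(p_k)=v$ and $|p_k|=k|c|+|g|+1$, and let $\epsilon_k=p_k p_k^{\ast}$. Each $\epsilon_k$ is a nonzero idempotent (since $p_k^{\ast}p_k=r(p_k)$) that is homogeneous of degree $0$ and satisfies $v\epsilon_k=\epsilon_k$, so that $\epsilon_k\in vLv$ and $L\epsilon_k\subseteq Lv$.

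The heart of the argument, and the step I expect to require the most care, is the orthogonality $\epsilon_k\epsilon_j=0$ for $k\neq j$. This reduces to showing $p_k^{\ast}p_j=0$ when $j>k$. Using the CK-1 relations to cancel the leading $c^k$ (via $c^{\ast}c=v$) and then $g^{\ast}g=u$, the product collapses to an expression of the form $f^{\ast}(c'c^{\,j-k-1}g)f$; since the middle path begins with the cycle edge $e_i$ and $f\neq e_i$, the relation $f^{\ast}e_i=0$ annihilates it, whence $p_k^{\ast}p_j=0$ and likewise $p_j^{\ast}p_k=0$. (When $u=v$ the segment $g$ is trivial and the same computation applies with $e_i$ the first edge of $c$.) The exit hypothesis is used precisely here: without an exit the vertex $u$ would emit only $e_i$, and no such $f$ would exist.

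Finally, orthogonal idempotents yield independent left ideals by the usual argument (right-multiplying a finite dependence relation $\sum_k x_k\epsilon_k=0$ by $\epsilon_j$ forces $x_j\epsilon_j=0$), and each $L\epsilon_k$ is a graded submodule because $\epsilon_k$ is homogeneous. Thus $\{L\epsilon_k:k\geq 0\}$ is an infinite independent family of graded submodules of $Lv$, contradicting Proposition \ref{Injective implies row-finite}(ii). Hence no cycle in $E$ can have an exit.
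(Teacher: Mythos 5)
Your proof is correct, and it drives at the same contradiction the paper does, namely against Proposition \ref{Injective implies row-finite}(ii); the difference lies in how the infinite independent family of graded submodules of $Lv$ is manufactured. The paper (taking the exit $f$ to be at the base $v$ of the cycle, which implicitly rebases $c$ at the bifurcation vertex) forms the ascending chain $L(v-cc^{\ast})\subseteq L(v-c^{2}(c^{\ast})^{2})\subseteq\cdots$ of graded direct summands of $Lv$, uses the exit to show each inclusion is strict (if $L(v-c^{n}(c^{\ast})^{n})=L(v-c^{n+1}(c^{\ast})^{n+1})$ then $c^{n}=c^{n+1}c^{\ast}$, whence $c^{n}f=0$, a contradiction), and then takes complements $B_{n}$ of successive terms to get an independent family $L(v-cc^{\ast})\oplus\bigoplus_{n\geq1}B_{n}\subseteq Lv$. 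You instead build the family in one step: the paths $p_{k}=c^{k}gf$ give pairwise orthogonal homogeneous idempotents $\epsilon_{k}=p_{k}p_{k}^{\ast}$ of degree $0$ in $vLv$, with orthogonality coming from $f^{\ast}e_{i}=0$ exactly as you compute, and $\{L\epsilon_{k}\}$ is then an independent family of nonzero graded submodules of $Lv$. Your route is somewhat more economical: it avoids the chain, the strictness argument, and the appeal to complements of direct summands, and it handles an exit at an arbitrary vertex of the cycle explicitly via the factorization $c=gc'$ rather than by tacit rebasing. The two constructions are close cousins -- when $g$ is trivial your idempotent $\epsilon_{k}=c^{k}ff^{\ast}(c^{\ast})^{k}$ sits inside the paper's complement $L\,c^{k}(v-cc^{\ast})(c^{\ast})^{k}$ -- but the paper's chain formulation is chosen because it directly adapts the argument of Theorem 3.9 of \cite{AAPS}, which the authors cite as their model.
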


\begin{proof}
Suppose, by way of contradiction, $E$ contains a cycle $c$ based at a vertex
$v$ and having an exit $f$ at $v$. We modify the ideas in the proof of (ii) =
$>$
(iii) of Theorem 3.9 in \cite{AAPS}. Now for any $n$, $v-c^{n}(c^{\ast})^{n}$
is a homogeneous idempotent and $v-c^{n}(c^{\ast})^{n}=(v-c^{n}(c^{\ast}%
)^{n})(v-c^{n+1}(c^{\ast})^{n+1})$. Consequently we get an ascending chain of
graded ideals contained in $Lv:$
\[
L(v-cc^{\ast})\subseteq L(v-c^{2}(c^{\ast})^{2}\subseteq\cdot\cdot\cdot
\qquad\qquad\qquad(\#\#)
\]
We claim that the containments in the chain $(\#\#)$ are strict. Suppose, on
the contrary, $L(v-c^{n}(c^{\ast})^{n})=L(v-c^{n+1}(c^{\ast})^{n+1})$ for some
$n$. Then $(v-c^{n+1}(c^{\ast})^{n+1})=a(v-c^{n}(c^{\ast})^{n})$ for some
$a\in L$. \ Multiplying the last equation by $c^{n}$ on the right, we get
$c^{n}-c^{n+1}c^{\ast}=0$ and so $c^{n}=c^{n+1}c^{\ast}$. Again multiplying
the last equation on the right by $f$, we get $c^{n}f=0$, a contradiction.
\ Thus $(\#\#)$ is a strictly increasing infinite chain\ of direct summands of
$Lv$. Let, for each $n\geq1$, $L(v-c^{n+1}(c^{\ast})^{n+1})=L(v-c^{n}(c^{\ast
})^{n})\oplus B_{n}$. Then%
\[
A=%
{\displaystyle\bigcup\limits_{n\geq1}}
L(v-c^{n}(c^{\ast})^{n})=L(v-cc^{\ast})\oplus%
{\displaystyle\bigoplus\limits_{n=1}^{\infty}}
B_{n}\subset Lv.
\]
\ Since $Lv$ is graded-injective, this is a contradiction by Proposition
\ref{Injective implies row-finite}. Thus no cycle in $E$ has an exit.
\end{proof}

\begin{proposition}
\label{Injective implies paths contain line or Laurent vertex}If $L$ is graded
left self-injective, then every infinite path in $E$ contains a line point or
\ is tail-equivalent to a rational path induced by a cycle without exits and
thus contains a Laurent vertex.
\end{proposition}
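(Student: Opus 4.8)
The plan is to lean entirely on the two structural restrictions already extracted from graded self-injectivity: by Proposition \ref{Injective implies row-finite}, $E$ is row-finite and no $Lv$ admits infinitely many independent graded submodules, while by Corollary \ref{NE condition} no cycle in $E$ has an exit. Fix an infinite path $p=e_1e_2\cdots$, set $v=s(p)$, and let $v_i=s(e_i)$ be the vertices traversed by $p$. The entire argument reduces to showing that $p$ eventually becomes a ``line'', i.e. passes through only finitely many bifurcation vertices, and then reading off from the resulting tail whether or not a cycle occurs.

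First I would show that only finitely many of the $v_i$ are bifurcation vertices. Suppose not: then there are indices $i_1<i_2<\cdots$ at each of which $v_{i_k}$ emits an edge $f_k\neq e_{i_k}$. Set $g_k=e_1\cdots e_{i_k-1}f_k$, a path with $s(g_k)=v$. Each $g_kg_k^{\ast}$ is a homogeneous idempotent of degree $0$ lying in $vLv$, so each $Lg_kg_k^{\ast}$ is a graded submodule of $Lv$. For $k\neq l$, say $i_k<i_l$, cancelling the common initial segment in $g_k^{\ast}g_l$ leaves a factor $f_k^{\ast}e_{i_k}$, which is $0$ by the CK-1 relations since $f_k\neq e_{i_k}$; hence the idempotents $g_kg_k^{\ast}$ are pairwise orthogonal and the family $\{Lg_kg_k^{\ast}:k\geq1\}$ is independent. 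This contradicts Proposition \ref{Injective implies row-finite}.

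Consequently there is an index $N$ beyond which each $v_i$ ($i\geq N$) emits the single edge $e_i$, so $T_E(v_N)$ is exactly the vertex set $\{v_i:i\geq N\}$ of the tail of $p$ and contains no bifurcation. It remains to split on whether this tail repeats a vertex. If the $v_i$ ($i\geq N$) are all distinct, then $T_E(v_N)$ has neither a bifurcation nor a cycle, so $v_N$ is a line point and $p$ contains a line point. Otherwise $v_i=v_j$ for some $N\leq i<j$; since each vertex past $v_N$ emits a unique edge, the segment $c=e_i\cdots e_{j-1}$ is a cycle, and uniqueness of the emitted edges forces the tail from $v_i$ to read $ccc\cdots$, so $p$ is tail-equivalent to the rational path $ccc\cdots$. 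By Corollary \ref{NE condition} the cycle $c$ has no exit, and $T_E(v_N)$ is precisely the vertex set of $\mu c$ with $\mu=e_N\cdots e_{i-1}$ bifurcation-free; thus $v_N$ is a Laurent vertex in the sense of Definition \ref{Defn Laurent vertex}, and $p$ contains a Laurent vertex.

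I expect the crux to be the first step, the construction of the infinite independent family from the bifurcations, where the care lies in verifying that the chosen idempotents are homogeneous of degree $0$, that the left ideals they generate genuinely sit inside $Lv$ and are graded, and above all that orthogonality holds so that the family is independent rather than merely infinite. Once that contradiction is secured, the remaining dichotomy on the tail of $p$ is a direct structural reading requiring no further machinery.
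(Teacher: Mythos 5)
Your proof is correct and follows essentially the same route as the paper: the orthogonal homogeneous idempotents $g_kg_k^{\ast}$ you build from the bifurcation edges are exactly the paper's $\epsilon_{n}=\gamma_{1}\cdots\gamma_{n}f_{n+1}f_{n+1}^{\ast}\gamma_{n}^{\ast}\cdots\gamma_{1}^{\ast}$, yielding the same contradiction with Proposition \ref{Injective implies row-finite}(ii), after which both arguments invoke Corollary \ref{NE condition} to read off the line point or Laurent vertex from the bifurcation-free tail. Your version is in fact slightly more careful than the paper's (explicit orthogonality check, explicit dichotomy on whether the tail repeats a vertex); the only cosmetic point is that when a repetition $v_i=v_j$ occurs you should take the first one so that $e_i\cdots e_{j-1}$ is genuinely a cycle rather than merely a closed path.
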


\begin{proof}
Let $\mu$ be a an infinite path in $E$ with $s(\mu)=v$. Clearly $Lv$ is a
graded injective $L$-module. Suppose $\mu$ does not contain a line point. Then
$\mu$ either contains a cycle or contains infinitely many bifurcating
vertices. Suppose $\mu$ contains infinitely many bifurcating vertices $v_{n}$
and bifurcating edges $f_{n}$ \ with $s(f_{n})=v_{n}$. Without loss of
generality, we may assume $v=s(\mu)=v_{1}$. For each $n\geq1$, let $\gamma
_{n}$ be the part of $\mu$ with $r(\gamma_{n})=v_{n+1}$ and $s(\gamma
_{n})=v_{n}$ and thus $\mu$ is the concatenation of paths, $\mu=\gamma
_{1}\gamma_{2}\cdot\cdot\cdot\gamma_{n}\cdot\cdot\cdot$. Let, for each $n$,
$\epsilon_{n}=\gamma_{1}\gamma_{2}\cdot\cdot\cdot\gamma_{n}f_{n+1}%
f_{n+1}^{\ast}\gamma_{n}^{\ast}\cdot\cdot\cdot\gamma_{1}^{\ast}$. Then the set
$\{\epsilon_{n}:n\geq1\}$ is a set of homogeneous orthogonal idempotents
giving rise to an infinite set of independent graded submodules of $Lv$ and
this, by Proposition \ref{Injective implies row-finite}(ii), contradicts the
graded injectivity of $Lv$. So $\mu$ must contain a cycle $c$ which, by
Corollary \ref{NE condition}, has no exits. This means the infinite path $\mu$
must be tail-equivalent to the infinite rational path \ $ccc\cdot\cdot\cdot$
\ \ \ and contains the Laurent vertex $v$, where $v$ is the base of the cycle
$c$.
\end{proof}

\begin{theorem}
\label{Graded-injective LPAs}Let \ $E$ be an arbitrary graph. Then the
following are equivalent for $L:=L_{K}(E)$:

(1) $L$ is \ graded left/right self-injective;

(2) $E$ is row-finite, no cycle in $E$ has an exit and every infinite path in
$E$ is tail-equivalent to either straight line segment containing a line point
or to a rational path $ccc\cdot\cdot\cdot$ for some cycle $c$ (without exits);

(3) There is a graded isomorphism%
\[
L_{K}(E)\cong_{gr}%
{\displaystyle\bigoplus\limits_{v_{i}\in X}}
M_{\Lambda_{i}}(K)((|\overset{-}{p^{v_{i}}|)}\oplus%
{\displaystyle\bigoplus\limits_{w_{j}\in Y}}
M_{\Upsilon_{j}}(K[x^{t_{j}},x^{-t_{j}}])(|\overset{-}{q^{w_{j}}}|)
\]
where $\Lambda_{i}$,$\Upsilon_{j}$ are suitable index sets, the $t_{j}$ are
positive integers, .\ $X$ is the set of representatives of distinct
equivalence classes of line points in $E$ and $Y$ is the set of all distinct
cycles (without exits) in $E$.
\end{theorem}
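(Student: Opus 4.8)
The plan is to prove this three-way equivalence by establishing the cycle of implications $(1)\Rightarrow(2)\Rightarrow(3)\Rightarrow(1)$, since most of the heavy lifting has already been set up in the preceding propositions and corollaries.

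For $(1)\Rightarrow(2)$, I would simply assemble the structural consequences of graded self-injectivity that were proved in the lead-up lemmas. Row-finiteness is exactly Proposition \ref{Injective implies row-finite}(i). The condition that no cycle has an exit is Corollary \ref{NE condition}. Finally, the statement that every infinite path is tail-equivalent either to a segment containing a line point or to a rational path $ccc\cdots$ is precisely Proposition \ref{Injective implies paths contain line or Laurent vertex}. So this implication is essentially a matter of citing the three preparatory results in sequence.

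For $(2)\Rightarrow(3)$, I would argue that the graph-theoretic conditions in $(2)$ are exactly the hypotheses needed to apply the extended graded isomorphism $(8)$ derived just before Theorem \ref{Graded Socle}. Concretely, conditions $(2)$ guarantee that $E^{0}$ is the saturated closure of the set of all line points and Laurent vertices, so that $L = Soc^{gr}(L)$; then Theorem \ref{Graded Socle} supplies the desired graded direct-sum decomposition into matrix rings over $K$ and over $K[x^{t_j},x^{-t_j}]$ with the stated shift-gradings. The main point to verify is that condition $(2)$ forces every vertex to eventually reach a line point or a Laurent vertex, which I expect follows from row-finiteness together with the exit-free and tail-equivalence hypotheses, possibly with a König's-lemma-type argument on the finitely-branching tree $T_E(v)$.

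The implication $(3)\Rightarrow(1)$ is where I anticipate the real work, and I expect it to be the main obstacle. The plan is to show directly that each graded matrix summand $M_{\Lambda}(K)(\bar\delta)$ and $M_{\Upsilon}(K[x^t,x^{-t}])(\bar\delta)$ is graded self-injective as a module over itself, and that a graded direct sum of graded self-injective algebras is again graded self-injective over the big algebra. The subtlety is that these are matrices of \emph{arbitrary} (possibly infinite) size with only finitely many nonzero entries, so $L$ lacks a unit and one cannot invoke the classical Baer criterion verbatim; I would instead work with the local units coming from finite sets of vertices and check the lifting property for graded homomorphisms from graded left ideals, reducing to finite submatrices where the base rings $K$ and the graded field $K[x^t,x^{-t}]$ are graded self-injective because every nonzero homogeneous element is invertible. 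A careful check that the grading shifts $\bar\delta$ do not obstruct the injectivity argument will be needed, and this compatibility of the shift-grading with the Baer-type lifting over local units is the step I would scrutinize most closely.
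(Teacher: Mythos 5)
Your implications $(1)\Rightarrow(2)$ and $(2)\Rightarrow(3)$ coincide with the paper's proof. The first is exactly the assembly of Proposition \ref{Injective implies row-finite}, Corollary \ref{NE condition} and Proposition \ref{Injective implies paths contain line or Laurent vertex}. The second is the paper's argument as well: one shows $E^{0}$ equals the hereditary saturated closure $H$ of the line points and Laurent vertices (if some $v_{1}\notin H$, then row-finiteness and saturation let one choose an edge from $v_{1}$ to a vertex outside $H$, and iterating produces an infinite path containing no line point and no cycle without exits, contradicting $(2)$), and then Theorem \ref{Graded Socle} gives the displayed decomposition. This is precisely the ``row-finiteness plus saturation'' argument you anticipated.

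The gap is in $(3)\Rightarrow(1)$, exactly where you expect ``the real work.'' In the paper this implication is a one-liner, and the idea you are missing is graded semi-simplicity: the decomposition in $(3)$ exhibits $L$ as a direct sum of minimal graded left ideals, since each matrix ring is the direct sum of its columns and each column is a minimal graded left ideal because $K$ and $K[x^{t_{j}},x^{-t_{j}}]$ are graded division rings (compare Lemma \ref{eAe graded field implies Le graded simple}). Over a graded semi-simple ring with (homogeneous) local units, every graded unitary module is a sum of graded-simple modules, so every graded submodule of every graded module is a direct summand; hence every graded module --- in particular $L$ itself --- is graded injective, and $(1)$ follows with no Baer-type computation at all. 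Your proposed route, by contrast, founders on a point you do not flag: injectivity does not pass to directed unions or to infinite direct sums in general. Knowing that each finite corner $M_{n}(K)(\delta_{1},\dots,\delta_{n})$ is graded self-injective says nothing by itself about $M_{\Lambda}(K)(\bar{\delta})$ with $\Lambda$ infinite, which is the directed union of such corners: a graded homomorphism defined on a graded left ideal meeting infinitely many columns admits many extensions over each finite corner, and there is no a priori way to choose them coherently --- this coherence is exactly what semi-simplicity supplies, and without it the analogous statement is false for general rings (for unital rings, infinite direct sums of injectives being injective is equivalent to the Noetherian condition). The two issues you do single out for scrutiny (local units, the shifts $\bar{\delta}$) are harmless, since graded injectivity is invariant under shifting. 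To repair your plan you would in effect have to prove the semi-simplicity statement anyway, so the correct move is to invoke it directly, as the paper does.
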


\begin{proof}
Now (1) =%
$>$
(2) by Propositions \ref{Injective implies row-finite},
\ref{Injective implies paths contain line or Laurent vertex} and Corollary
\ref{NE condition}.

Assume (2). We claim that $E^{0}=H$, the hereditary saturated closure of the
set of all line points and Laurent vertices in $E$. \ Suppose, on the
contrary, there is a $v_{1}\in E^{0}\backslash H$. Then, as $E$ is row-finite,
there must be an edge $e_{1}$ with $s(e_{1})=v_{1}$ and $r(e_{1})=v_{2}\notin
H$. This then implies there is an edge $e_{2}$ with $s(e_{2})=v_{2}$ and
$r(e_{2})=v_{3}\notin H$. Proceeding like this we get an infinite path
$e_{1}e_{2}e_{3}\cdot\cdot\cdot$ \ which neither contains a line point nor a
cycle without exits (and thus no Laurent vertex), a contradiction to our
hypothesis. Hence $E^{0}=H$ and so $L$ is the ideal generated by $H$. Then (3)
follows from Theorem \ref{Graded Socle}.

Finally, (3) =%
$>$
(1) since $L$ is graded semi-simple and so is graded injective as a left/right
$L$-module .
\end{proof}


\begin{thebibliography}{99}                                                                                               %


\bibitem {AA}G. Abrams, G. Aranda Pino, The Leavitt path algebra of a graph,
J. Algebra \textbf{ }293 (2005) 319 - 334.

\bibitem {AAPS}G. Abrams, G. Aranda Pino, F. Perera, M. Siles Molina, Chain
conditions for Leavitt path algebras, Forum Math.\ 22 (2010) 95 - 114.

\bibitem {AAS}G. Abrams, G. Aranda Pino, M. Siles Molina, Finite dimensional
Leavitt path algebras, J. Pure and Applied Algebra 209 (2007) 753 - 762.

\bibitem {AAS-1}G. Abrams, G. Aranda Pino, M. Siles Molina, Locally finite
Leavitt path algebras, Israel J Math. 165 (2008) 329 - 348.

\bibitem {ABR}G. Abrams, J. Bell, K.M. Rangaswamy, On prime non-primitive von
Neumann regular algebras, Trans. Amer. Math. Soc.366 (2014) 2375 - 2392.

\bibitem {AMT}G. Abrams, F. Mantese, A. Tonolo, Extensions of simple modules
over Leavitt path algebras, J. Algebra 431 (2015) 78 - 106.

\bibitem {AR-1}G. Abrams, K.M. Rangaswamy, Regularity conditions for arbitrary
Leavitt path algebras, Algebr. Represent. Theory 13 (2010) 319 - 334.

\bibitem {ARS1}G. Abrams, K.M. Rangaswamy, M. Siles Molina, Socle series in a
Leavitt path algebra, Israel J Math 184 (2011), 413 - 435.

\bibitem {AB}P. Ara and M. Brustenga, Module theory over Leavitt path algebras
and K-theory, J. Pure and Applied Algebra 214 (2010) 1131 - 1151.

\bibitem {AMP}P. Ara, M.A. Moreno, E. Pardo, Nonstable K-thory for Graph
Algebras, Algebr. Represent. Theory 10 (2007) 157 - 168.

\bibitem {ARa}P. Ara and K.M. Rangaswamy, Finitely presented simple modules
over Leavitt path algebras, J. Algebra 417 (2015) 333 - 352.

\bibitem {AMMS}G. Aranda Pino, D. Mart\'{\i}n Barquero, C. Mart\'{\i}n
Gonzalez and M. Siles Molina, \ The socle of a Leavitt path algebra J. Pure
and Applied Algebra 212 (2008) 500 - 509.

\bibitem {ARS}G. Aranda Pino, K.M. Rangaswamy, M. Siles Molina, Weakly regular
and self-injective Leavitt path algebras over arbitrary graphs, Algebr.
Represent. Theory 14 (2011) 751 - 777.

\bibitem {C}X.W. Chen, Irreducible representations of Leavitt path algebras,
Forum Math.\textbf{ }20 (2012), http://dx.doi.org/10-1515/forum-2012-0020.

\bibitem {D}A. Devulder, The speed of a branching system of random walks in
random environment, Statist. Probab. Lett. 77 (18) (2007) 1712 - 1721.

\bibitem {GR-1}D. Gon\c{c}alves, D. Royer, Unitary equivalence of
representations of graph algebras and branching systems, Functional Analysis
and Applications \ 45 (2011) 117 -127.

\bibitem {GR-0}D. Gon\c{c}alves, D. Royer, Graph $C^{\ast}$-algebras,
branching systems and the Perron-Frobenius operator, J Math. Analysis and
Applications 391 (2012) 457 - 465.

\bibitem {GR}D. Gon\c{c}alves, D. Royer, On the representations of Leavitt
path algebras, J. Algebra 333 (2011) 258 - 272.

\bibitem {G}K.R. Goodearl, von Neumann regular rings, Krieger Publ. CO.,
Malabar (1991).

\bibitem {H-1}R. Hazrat, The graded structure of Leavitt path algebras, Israel
J. Math.195 (2013) 833 - 895.

\bibitem {H-2}R. Hazrat, Leavitt path algebras are graded von Neumann regular
rings, J \ Algebra \ 401 (2014) 220 - 233.

\bibitem {H}K. J. Hochberg, Andreas Greven, On the use of the Laplace
functional for two-level branching systems, Int. J. Pure Appl. Math. 55 (2)
(2009) 165 - 172.

\bibitem {L}T.Y. Lam, Lectures on Modules and Rings, Graduate Texts in Math.,
vol. 189 Springer-Verlag, 1999.

\bibitem {NvO}C. Nastasescu, F. van Oystaeyen, Graded Ring Theory,
North-Holland, Amsterdam, 1982.

\bibitem {R-1}K.M. Rangaswamy, Theory of prime ideals of Leavitt path algebras
over arbitrary graphs, J. Algebra \ 375 (2013) 73 - 90.

\bibitem {R-2}K.M. Rangaswamy, On simple modules over Leavitt path algebras,
J. Algebra 423 (2015) 239 - 258.

\bibitem {R-3}K.M. Rangaswamy, Leavitt path algebras with finitely presented
irreducible representations, arXiv:1501.01781 [math.RA] 8 Jan 2015.

\bibitem {SS}B. Salinier, R Strandh, Efficient simulation of forward-branching
systems with constructor systems, J. Symbolic Comput. 22 (4) (1996) 381 - 399.

\bibitem {T}M. Tomforde, Uniqueness theorems and ideal structure for Leavitt
path algebras, J. Algebra 318 (2007) 270 - 299.
\end{thebibliography}
\end{document}